\def\Int{\mathcal{I}}
\def\Ordo{\mathcal{O}}
\def\Real{\mathbb{R}}  % Reals
\def\Cnumbers{\mathbb{C}}  % Complex
\def\Znumbers{\mathbb{Z}}  % Integers
\newcommand{\lbeq}[1]{{\label{OR:eq:#1}}}
\newcommand{\lbsec}[1]{{\label{OR:sec:#1}}}
\newcommand{\lbtheo}[1]{{\label{OR:theo:#1}}}
\newcommand{\lblem}[1]{{\label{OR:lem:#1}}}
\newcommand{\lbrem}[1]{{\label{OR:rem:#1}}}
\newcommand{\eq}[1]{{(\ref{OR:eq:#1})}}
\newcommand{\eqtwo}[2]{{(\ref{OR:eq:#1}, \ref{OR:eq:#2})}}
\newcommand{\sect}[1]{{Section~\ref{OR:sec:#1}}}
\newcommand{\appen}[1]{{Appendix~\ref{OR:sec:#1}}}
\newcommand{\theo}[1]{{Theorem~\ref{OR:theo:#1}}}
\newcommand{\lem}[1]{{Lemma~\ref{OR:lem:#1}}}
\newcommand{\rem}[1]{{Remark~\ref{OR:rem:#1}}}
\newcommand{\be}[1]{\begin{equation} \lbeq{#1}}
\newcommand{\ee}{\end{equation}}
\newcommand{\bea}[1]{\begin{eqnarray} \lbeq{#1}}
\newcommand{\eea}{\end{eqnarray}}
\newcommand{\bear}{\begin{array}}
\newcommand{\eear}{\end{array}}
\newtheorem{remark}{Remark}[section]
\title{Sobolev and Max Norm Error Estimates for Gaussian Beam Superpositions\\ \today}
\author{
Hailiang Liu\thanks{Department of Mathematics, Iowa State University, Ames, IA 50010, USA}
 \and 
 Olof Runborg\thanks{Department of Mathematics and Swedish e-Science Research Center (SeRC), KTH, 10044 Stockholm, Sweden}
 \and 
 Nicolay~M.~Tanushev\thanks{Z-Terra Inc., 
17171 Park Row, Suite 247,
Houston TX 77084, USA}
 }
\date{\today}
\begin{document}
\maketitle

\begin{abstract}
This work is concerned with the
accuracy of Gaussian beam superpositions, which
are asymptotically valid high frequency solutions to linear hyperbolic
partial differential equations and the Schr\"odinger equation.
We derive Sobolev and max norms estimates for the difference
between an exact solution and the corresponding Gaussian beam approximation,
in terms of the short wavelength~$\varepsilon$.
The estimates are performed for
the scalar wave equation and the
Schr\"odinger equation.
Our result demonstrates that a Gaussian beam
superposition with $k$-th order beams converges to the exact solution
as $O(\varepsilon^{k/2-s})$ in order $s$ Sobolev norms. 
This result is valid in any number of spatial
dimensions and it is unaffected by the presence of caustics in the solution. 
In max norm, we
show that away from caustics the convergence rate is $O(\varepsilon^{\lceil k/2\rceil})$ and away from the essential support of the solution, the 
convergence is spectral in $\varepsilon$. However, in the neighborhood of a caustic point
we are only able to show the slower, and dimensional dependent, rate
$O(\varepsilon^{(k-n)/2})$ in $n$ spatial dimensions.
\end{abstract}

\begin{keywords} 
high-frequency wave propagation, error estimates, Gaussian beams, Sobolev norm,
max norm
\end{keywords}

\begin{AMS}
58J45, 35L05, 35A35, 41A60, 35L30
\end{AMS}

\pagestyle{myheadings}
\thispagestyle{plain}
\markboth{H. LIU, O. RUNBORG AND N. M. TANUSHEV}{ERROR ESTIMATES FOR GAUSSIAN BEAM SUPERPOSITIONS}

\section{Introduction}

In this paper we consider the accuracy of
Gaussian beam approximations for two time-dependent
partial differential equations (PDEs) 
with highly oscillatory solutions:
the dispersive Schr\"odinger equation
in the semi-classical regime,
\begin{align}\lbeq{Schrodinger}
  -i\varepsilon u_t - \frac{\varepsilon^2}{2}\triangle u + V(y)u &= 0, \qquad (t,y)\in (0,T]\times \Real^n, \\
  u(0,y) &=B_0(y)e^{i\varphi_0(y)/\varepsilon} \notag \ ,
\end{align}
and the scalar wave equation,
\begin{align}\lbeq{WaveEq}
u_{tt} - c(y)^2\Delta u &= 0, \qquad (t,y) \in (0,T]\times\Real^n, \\ 
 u(0,y) &= B_0(y)e^{i\varphi_0(y)/\varepsilon}, \notag \\ 
 u_t(0,y) &= \varepsilon^{-1}B_1(y)e^{i\varphi_0(y)/\varepsilon}.  \notag
\end{align}
In these equations,
$V(y)$ is an external potential,
$c(y)$ is the speed of propagation
and $\varepsilon\ll 1$
is the short wavelength, or the scaled Planck constant for \eq{Schrodinger}.
Since $\varepsilon$ is small, the initial data for both PDEs
are highly oscillatory. The amplitude functions 
$B_{{\ell}}$ and phase $\varphi_0$
are real valued functions on $\Real^n$.
We will assume that $c,V,\varphi_0,B_\ell$ are all smooth
and
that $B_{{\ell}}$ are supported in
the compact set $K_0\subset\Real^n$.

Direct numerical
simulation of these PDEs is expensive when $\varepsilon$ is small.
A large number of grid points is needed to resolve the wave
oscillations and the computational cost to maintain constant accuracy grows
rapdily with the frequency. 
As an alternative
 one can use high frequency asymptotic models for wave propagation, such as geometrical optics \cite{Keller:62, EngRun:03, Runborg:07}, which is obtained 
 in the limit when $\varepsilon\to 0$.
% when the frequency tends to infinity.
 The solution of the PDE
 is then written as
\begin{align}\lbeq{GOform}
u(t,y) = a(t,y,\varepsilon) e^{i\phi(t,y)/\varepsilon},
\end{align}
where $\phi$ is the phase, and $a$ is the
amplitude of the solution, which both vary on a much coarser scale
than $u$.
When $\varepsilon\to 0$
the phase and amplitude are independent
of the frequency.
 Therefore, they can be computed at a computational cost independent of the frequency. However, at caustics where rays concentrate,
geometrical optics breaks down and the predicted amplitude
becomes unbounded, \cite{Ludwig:66, Kravtsov:64}.

Gaussian beams form another high frequency asymptotic model which is closely
related to geometrical optics
\cite{Ralston:82,Popov:1982,Katchalov_Popov:1981,Cerveny_etal:1982,Klimes:1984,Hormander:71,Hagedorn:80}.
Unlike geometrical optics, there is
no breakdown at caustics.
%; Gaussian beams are well-defined for all $t$.
The solution is 
assumed to be of the same form \eq{GOform}, but a Gaussian beam is
a localized solution that concentrates near a single geometrical
optics ray $x(t)$ in space-time. We write it as
$$
v(t,y)=A(t,y-x(t))e^{i\Phi(t,y-x(t))/\varepsilon}.
$$
The concentration comes from the fact that, 
although the phase function is real-valued along $x(t)$, 
it has a positive {\em imaginary} 
part away from $x(t)$.
Moreover, the imaginary part is quadratic in $y$ so that
$\Im\Phi(t,y)\sim |y|^2>0$, and therefore
$|v(t,y)|\sim e^{-|y-x(t)|^2/\varepsilon}$, which means that
the beams have essentially a Gaussian shape of
width $\sqrt{\varepsilon}$, centered around $x(t)$.
%To form a Gaussian beam solution, we first pick a ray and solve a
%system of ordinary differential equations (ODEs) along it to find the values of
%the phase, its first and second order derivatives and the amplitude on the ray.
Because of this localization one can approximate the amplitude and phase
away from $x(t)$ by Taylor expansion;
both $\Phi(t,y)$
 and $A(t,y)$ are polynomials in $y$. 
 For instance, in first order beams $\Phi(t,y)$ is a second order polynomial, and
 $A(t,y)$ is a zeroth order (constant) polynomial.
 The coefficients in the
 polynomials satisfy ODEs.
 Higher order Gaussian beams are created by using an asymptotic
series for the amplitude and using higher order Taylor expansions 
for $\Phi(t,y)$
 and $A(t,y)$.
% to describe
%the phase and the amplitude functions. %, see \eq{PhaseWithZ} and \eq{AmpWithZ}.
For higher order beams, a cutoff function is also necessary to
avoid spurious growth away from the center ray. %Superpositions with higher order

In numerical methods one must consider
more general high frequency solutions,
which are not necessarily concentrated on a single ray. 
Superpositions of Gaussian beams are then used. This is natural 
since the PDEs are linear.
If we let
$v(t,y,z)$
be a beam starting from the point $y=z$,
the Gaussian beam superposition is defined as
\be{superpos}
  u_{GB}(t,y) = \left(\frac{1}{2\pi\varepsilon}\right)^\frac{n}{2} 
  \int_{K_0} v(t,y,z)dz,
\ee
for the
set $K_0$ where initial data is concentrated.
%Here $v$ can be any order beam. 
The prefactor 
normalizes the superposition appropriately, so that
$u_{GB}=O(1)$. More details about the construction of
Gaussian beam superpositions are given in \sect{GBdef}.

Numerical methods based on Gaussian beam type 
superpositions go back to the 1980's 
for the wave equation \cite{Popov:1982,Katchalov_Popov:1981,
Cerveny_etal:1982,Klimes:1984,White_etal:87} and for the
Schr\"odinger equation \cite{Heller:81,HermanKluk:84}.
Since then a great many such methods have
been developed for various applications
\cite{Hill:1990,Hill:2001,TQR:2007,FaouLubich:06,LeungQian:09,
Tanushev:08,
WuHuangJinYin:12,
MotRun:15,
QianYing:2010}.
Typically, the ODEs for the Taylor coefficients of the phase and amplitude
are solved
using numerical ODE methods like Runge--Kutta and the
superposition integral \eq{superpos} is approximated by
the trapezoidal rule. 
There are also Eulerian methods 
\cite{LeuQiaBur:07,JinWuYang:08,JinEtal:10} in which
PDEs are solved to get
the Taylor coefficients on fixed grids. 
For more discussions of numerical methods using Gaussian beams,
see \cite[Sections 8--9]{JinMarSpa:12}.

The topic of this paper is the accuracy of Gaussian beam approximations
in terms of the wavelength $\varepsilon$. 
Several such studies have been carried out
in recent years. One of the reasons have been 
to give a rigorous foundation for the beam based numerical methods above.
For the time-dependent case
error estimates were first derived for the initial data \cite{Klimes:86,Tanushev:08},
and later for the solution of scalar hyperbolic equations and the
Schr\"odinger equation
\cite{LiuRalston:09,LiuRalston:10,LiuRunborgTanushev:10,Zhen:14,LiuPryporov:15}. 
For the Helmholtz equation estimates have been 
given in \cite{MotamedRunborg:09,LiuEtAl:14}.
The general result is
that the error between the exact solution and the
Gaussian beam approximation decays as $\varepsilon^{k/2}$
for $k$-th order beams in the appropriate Sobolev
norm. However, in the recent paper \cite{Zhen:14},
Zheng showed the improved rate $\varepsilon$ for first
order beams ($k=1$) applied to the Schr\"odinger equation.
This rate agrees with the $\varepsilon^{\lceil k/2\rceil}$ rate shown in a simplified
setting for the (pointwise) Taylor expansion error away from caustics in
\cite{MotamedRunborg:09}. These improved estimates come from
exploiting error cancellations between 
adjacent beams; the higher rate is not present for single beams. 
There are also estimates for other Gaussian
beam like superpositions, in particular for so-called
frozen Gaussians \cite{RousseSwart:09,LuYang:12}
and for the acoustic wave equation with
superpositions in phase space
\cite{BougachaEtal:09}.

In this paper we first derive error estimates in general higher order
Sobolev norms for the Schr\"odinger equation and the scalar wave equation.
The result is in \theo{ErrorEstWaveSchrod} where
we obtain a convergence rate of $\varepsilon^{k/2-s}$ for $s$-order Sobolev norms.
Since the solution oscillates with period $\varepsilon$, this reduced rate
is expected. 
The proof follows closely the proof in \cite{LiuRunborgTanushev:10} for 
the case $s=0$. Second, we derive the 
main result of this paper. It is a max norm estimate given
in \theo{MaxNormError}. 
All earlier estimates for Gaussian beam approximations that we are aware of,
have been in integrated (Sobolev) norms. 
We believe this is the first max norm estimate.
We show that, away from caustics, 
the error has, uniformly,
the faster rate $\varepsilon^{\lceil k/2\rceil}$ shown in
\cite{MotamedRunborg:09,Zhen:14}, which we think is the optimal rate.
Close to caustics, our estimate
degenerates and we only get the dimensional dependent rate
$\varepsilon^{(k-n)/2}$. This rate can likely be improved, at least
for certain types of caustics, and a better understanding of this
error will be the subject of future
research. Finally, away from the essential support of the solution
the error, as well as the solution itself, decays at a spectral 
rate in $\varepsilon$.

The proof of the max norm estimate uses the 
Sobolev estimates derived in the first part of the paper, 
together with Sobolev inequalities to first get a rough estimate. 
It is subsequently refined by analyzing the difference
between beam approximations of different orders. We show in \theo{DiffRep} that
the difference can be
written as a sum of oscillatory integrals with certain properties.
The main difficulty lies in making uniform estimates
of these integrals; see \theo{Iestimate}.

The paper is organized as follows: 
In \sect{Prelim} we introduce notation and state our
main assumptions.
\sect{GBdef} introduces Gaussian
beam superpositions for the Schr\"odinger equation and the wave equation.
In \sect{GBProp} we show some simple consequences of our assumptions
as well as some known results about Gaussian beams.
\sect{Sobolev} and \sect{Maxnorm} are then devoted to proving
the error estimates in Sobolev norms and max norm, respectively.

%%%%%%%%%%%%%%%%%%%%%%%%%%%%%%%%%%%%%%%%%%%%%%%%%%%%%%%%%%%%%%%%%%%%%%%%%%%

\section{Preliminaries}\lbsec{Prelim}

%%%%%%%%%%%%%%%%%%%%%%%%%%%%%%%%%%%%%%%%%%%%%%%%%%%%%%%%%%%%%%%%%%%%%%%%%%%

In this section we introduce some notation and
describe the assumptions made 
for the PDEs and their initial data. We also summarize
some key well-posedness results.

We write $|x|$ for the Euclidean norm of a vector $x\in\Real^n$.
However, for a multi-index $\alpha=(\alpha_1,\ldots,\alpha_n)\in \Znumbers^n_+$,
 we use the standard convention that
$|\alpha|= \alpha_1+\cdots +\alpha_n$. We frequently use the simple estimate,
$$
|x^\alpha|\leq |x|^{|\alpha|},\qquad x\in\Real^n,\quad\alpha\in\Znumbers^n_+.
$$
For a function $f:\Real^n\mapsto \Real$ we let $\nabla f(x)$ denote
its gradient, and $D^2f(x)$ its Hessian matrix. Partial derivatives of 
order $\alpha$ is written as $\partial_x^\alpha f(x)$.
For a function $f:\Real^n\mapsto \Real^n$ we denote the Jacobian
matrix by $Df(x)$.

For function spaces we let
$C^\infty_b(\Real^n)$ be the functions in 
$C^\infty(\Real^n)$ whose derivatives are all bounded.
Moreover, $H^s(\Real^n)$ denotes the usual Sobolev spaces, with $H^0(\Real^n)=L^2(\Real^n)$. For these spaces we use
the standard norm, and an $\varepsilon$-scaled norm defined as
%with norm $\|\,\cdot\,||_{H^s(\Real^n)}$.
%We also define the following $\varepsilon$-scaled Sobolev norms,
%{\bf MAYBE NEED TO PUT THE the $\sup_t$ inside the sum!}
\be{Heps}
    \|f\|_{H^s(\Real^n)} :=
\sum_{|\alpha|\leq s}    
    \left\|\partial_y^\alpha f\right\|_{L^2(\Real^n)},\qquad
    \|f\|_{H_\varepsilon^s(\Real^n)} :=
\sum_{|\alpha|\leq s}    
    \varepsilon^{|\alpha|-s}\left\|\partial_y^\alpha f\right\|_{L^2(\Real^n)}.
%    \qquad
%    \|u\|_{W_\varepsilon^s(T,\Real^n)} :=
%\sum_{|\alpha|\leq s}   
%    \varepsilon^{|\alpha|}
%    \sup_{0\leq t \leq T}\left\|\partial_y^\alpha u(t,\,\cdot\,)\right\|_{L^2(\Real^n)}.
\ee
We finally define, for continuous $f$,
\be{SupLipdef}
   ||f||_{L^\infty(K)}:=
   \sup_{z\in{K}} |f(z)|,\qquad
   |f|_{{\rm Lip}(K)}:=\sup_{z,z'\in K}  
   \frac{|f(z)-f(z')|}{|z-z'|},
\ee
and note that 
for all $T>0$, compact set $K\subset \Real^n$
and
$f(t,z)\in C^\infty([0,T]\times K)$, 
\be{suplipfin}
   \sup_{t\in[0,T]} ||f(t,\,\cdot\,)||_{L^\infty(K)},
\qquad 
   \sup_{t\in[0,T]}|f(t,\,\cdot\,)|_{{\rm Lip}(K)},
\ee
are both finite.% \verify{Maybe the first is too obvious.}

We then make the following precise assumptions:
\begin{itemize}
\item[\bf (A1)] Smooth
and bounded potential; strictly positive, smooth and bounded speed of propagation,
$$
  c,V\in C^\infty_b(\Real^n),\qquad \inf_{y\in\Real^n} c(y)>0.
$$
\item[\bf (A2)] Smooth and compactly supported initial amplitudes,
$$
 B_{{\ell}}\in C^\infty(\Real^n),\quad  {\rm supp}\,B_{{\ell}}\subset K_0,
 \qquad \ell=0,1,
$$
where $K_0\subset \Real^n$ is a compact set.
\item[\bf (A3)] Smooth initial phase,
$$
  \varphi_0\in C^\infty(\Real^n).
$$
For the wave equation we also assume that the 
initial phase gradient is bounded away from zero,
$$
\inf_{y\in\Real^n} |\nabla\varphi_0(y)|>0.
$$
\item[\bf (A4)] High frequency,
$$
0<\varepsilon\leq 1.
$$
\end{itemize}
These assumptions imply that there are unique, smooth, solutions of \eq{Schrodinger} and
\eq{WaveEq}.
To be precise, the solutions and their time-derivatives
belong to $L^\infty([0,T];H^s(\Real^n))$ for all $s\geq 0$ and $T>0$.
%
%%\subsection{Well-posedness Estimates} 

The corner stone of our error estimates are the energy estimates for 
 the PDEs. 
 To facilitate the presentation we will use the following notation
 for the partial differential operators,
 \be{PDOs}
    P[u] := u_{tt} - c(y)^2\Delta u,\qquad
    P^\varepsilon[u] := -i\varepsilon u_{t} 
    -\frac{\varepsilon^2}{2}\Delta u
    +V(y)u.
 \ee
The estimate of the solution of the Schr\"odinger equation uses
the norm in \eq{Heps}.
For $s\geq 0$ and $T>0$, there is a constant $C_s(T)$ such that
whenever $\varepsilon\in(0,1]$,
\be{SchWellPosed}
\sup_{0\leq t\leq T} \|u(t,\,\cdot\,)\|_{H^s} \leq 
C_s(T)\left(
    ||u(0,\,\cdot\,)||_{H^{s}_\varepsilon(\Real^n)}+
   \frac{1}{\varepsilon} \sup_{0\leq t \leq T} ||P^\varepsilon[u](t,\,\cdot\,)||_{H^{s}_\varepsilon(\Real^n)}
    \right).
\ee
This estimate is standard for $s=0$. For $s>0$
it follows by induction 
upon differentiating the Schr\"odinger equation $s$ times.
For the wave equation there is a constant $C_s(T)$ 
for each $s\geq 1$ and $T>0$, such that
\begin{align}
\lbeq{WaveWellPosed}
\lefteqn{\sup_{0\leq t\leq T}\left(
    \|u(t,\,\cdot)\|_{H^{s}(\Real^n)} +
    \|\partial_t u(t,\,\cdot)\|_{H^{s-1}(\Real^n)}\right)}
    \hspace{7 mm} &\ \\
&\leq 
    C_s(T)\left(
    \|u(0,\,\cdot\,)\|_{H^{s}(\Real^n)} +
    \|\partial_t u(0,\,\cdot\,)\|_{H^{s-1}(\Real^n)}
     + 
     \sup_{0\leq t \leq T}
     \|P[u](t,\,\cdot\,)\|_{H^{s-1}(\Real^n)}
    \right).\nonumber
\end{align}
See e.g.\mbox{} \cite[Lemma~23.2.1]{HormanderIII}.

\begin{remark}
For the Schr\"odinger equation, 
we do not need to assume the lower bound on $|\nabla\varphi_0|$.
\end{remark}
\begin{remark}
The assumption of $C^\infty$ smoothness for all functions is made for simplicity
to avoid an overly technical discussion about precise regularity requirements. In
this sense, the error estimates given below can be sharpened, since 
they will be true also for less
regular functions.
%Similarly, it would often be enough for
%$\varphi_0$ to be defined just on $K_0$, and satsify
%(A3) there, since it
% could
%then typically be extended to a function $\tilde\varphi_0$ 
%defined and satisfying (A3) on all of $\Real^n$.
\end{remark}

%%%%%%%%%%%%%%%%%%%%%%%%%%%%%%%%%%%%%%%%%%%%%%%%%%%%%%%%%%%%%%%%%%%%%%%%%%%

\section{Gaussian Beams}\lbsec{GBdef}

%%%%%%%%%%%%%%%%%%%%%%%%%%%%%%%%%%%%%%%%%%%%%%%%%%%%%%%%%%%%%%%%%%%%%%%%%%%

In this section, we briefly describe the Gaussian beam approximation.
We restrict the description to the points that are relevant for the
accuracy analysis in subsequent sections.
For a more detailed account with a general derivation for
hyperbolic equations, dispersive wave equations and Helmholtz equation,
we
refer to 
\cite{Ralston:82,Tanushev:08,LiuRalston:09,LiuRalston:10,JinMarSpa:12,LiuRunborgTanushev:10,LiuEtAl:14}.

Individual Gaussian beams concentrate around a {\em central ray} in
space-time. 
We denote the $k$-th order Gaussian beam and the central ray
starting at $z\in K_0$ by $v_k(t,y,z)$ and
$x(t,z)$ respectively.
The beam has the following form,
\be{vdef}
v_{k}(t,y,z) =
A_{k}(t,y-x(t,z),z)e^{i\Phi_{k}(t,y-x(t,z),z)/\varepsilon},
\ee
where
\be{phidef}
  \Phi_{k}(t,y,z) = 
  \phi_{0}(t,z) + y\cdot p(t,z) + \frac12y\cdot  M(t,z) y + 
  \sum_{|\beta|=3}^{k+1} \frac1{\beta!}\phi_{\beta}(t,z) y^\beta,
\ee
and
\be{Adef1}
  A_{k}(t,y,z)= \sum_{j=0}^{\lceil \frac{k}{2} \rceil -1} \varepsilon^j \bar{a}_{j,k}(t,y,z),
\ee
\be{Adef2}
  \bar{a}_{j,k}(t,y,z)  = \sum_{|\beta|=0}^{k-2j-1} \frac1{\beta!}a_{j,\beta}(t,z) y^\beta \ .
\ee
Note that none of $\phi_{0}$, $p$, $M$, $\phi_{\beta}$ or 
$a_{j,\beta}$
depend on $k$.

Single beams are summed together to form the
$k$-th order Gaussian beam superposition solution $u_k(t,y)$,
\be{udef}
u_{k}(t,y) = 
    \left(\frac{1}{2\pi\varepsilon}\right)^\frac{n}{2} 
   \int_{K_0} v_{k}(t,y,z) \varrho_\eta(y-x(t,z)) dz,
\ee
where the integration in $z$ is over the support of the initial data $K_0\subset\Real^n$.
The function $\varrho_\eta\in C^\infty(\Real^n)$ 
is a real-valued {\em cutoff} function with radius $0<\eta\leq\infty$ satisfying, 
\begin{align}\lbeq{cutoff}
\varrho_\eta(z) \geq 0 \quad \mbox{ and } \quad \varrho_\eta(z)= 
\left\{ \begin{array}{ll}\begin{array}{l} 
1 \mbox{ for } |z|\leq\eta, \\ 
0 \mbox{ for } |z|\geq 2\eta, 
\end{array} 
&\mbox{ for } 0<\eta<\infty, \\ 
\begin{array}{l}
1,
\end{array} & \mbox{ for } \eta=\infty. \end{array}\right. 
\end{align}
As shown below in \lem{GBphaseOKforQ},
if $\eta>0$ is sufficiently small, it is ensured that 
$\Im\Phi_k>0$ on the support of $\varrho_\eta$ and
the Gaussian beam superposition is well-behaved.
For first order beams, $k=1$, the cutoff function is not needed
and we can take $\eta=\infty$.

Since the wave equation \eq{WaveEq} is a second order equation
two modes and two Gaussian beam superpositions are needed,
one for forward and one for backward propagating waves. We
denote the corresponding coefficients
by a $+$ and $-$ superscript, respectively, and
write
\be{udefwave}
u_{k}(t,y) = 
    \left(\frac{1}{2\pi\varepsilon}\right)^\frac{n}{2} 
   \int_{K_0} [v^+_{k}(t,y,z) + v^-_{k}(t,y,z)] \varrho_\eta(y-x(t,z)) dz,
\ee
where $v_k^{\pm}$ are built from the central rays $x^\pm(t,z)$ and
coefficients 
 $\phi_{0}^\pm$, $p^\pm$, $M^\pm$, $\phi^\pm_{\beta}$,
$a^\pm_{j,\beta}$.

\subsection{Governing ODEs}\lbsec{GBODE}
%\verify{Remark on Hamiltonian formulation}

The central rays $x(t,z)$ and
all the coefficients $\phi_{0}$, $p$, $M$, $\phi_{\beta}$ and
$a_{j,\beta}$ satisfy ODEs in $t$. The dependence on $z$ is only via the
initial data.

For the Schr\"odinger equation
the leading order ODEs are
\begin{subequations}\lbeq{GBODESch}
\begin{align}
   \partial_t x &= p,\\
   \partial_t p &= -\nabla V(x),\\
   \partial_t \phi_{0} & =\frac{|p|^2}{2} - V(x),\\
   \partial_t M & = - M^2 - D^2V(x),\\
   \partial_t a_{0}
&=
-\frac12 {\rm Tr}(M) a_0.
\end{align}
\end{subequations}
The ODEs for the higher order coefficients
$\phi_{\beta}$ and
$a_{j,\beta}$ are more complicated.
The phase derivatives $\phi_{\beta}$
can be solved recursively in such a way that all ODEs are linear.
They are of the form
$$
\partial_t\phi_{\beta} =
-\frac12\sum_{j=1}^n \mathop{\sum_{|\gamma|=1}}_{\gamma\leq \beta}^{|\beta|-1}
 \frac{
   \beta!}{(\beta-\gamma)!\gamma!} \phi_{\beta-\gamma+e_j}\phi_{\gamma+e_j}
   -\partial_y^\beta V, \qquad |\beta| \geq 3\ .
$$
The amplitude terms $a_{j,\beta}$ satisfy a big
linear system of ODEs of the form
\be{amplitudeODE}
   \partial_t{\mathbf a} (t,z)  = {\mathbf A}(t,z){\mathbf a} (t,z),
\ee
where ${\mathbf a}$ is a vector containing all coefficients
$\{a_{j,\beta}\}$
 and ${\mathbf A}$ is a matrix determined 
from the phase terms $\{\phi_{\beta}\}$. Moreover, ${\mathbf A}$
is lower block triangular if the elements of ${\mathbf a}$
is ordered with increasing $|\beta|$;
$\partial_t a_{j,\beta}$ only depends on $a_{j,\beta'}$ with $|\beta'|\leq |\beta|$.
We refer to \cite{Ralston:82,Tanushev:08} for more detailed
discussions.

The leading order ODEs for the two modes of the wave equation are 
\begin{subequations}\lbeq{GBODEWave}
\begin{align}
                  \partial_t x^\pm &= \pm c(x^\pm)\frac{p^\pm}{|p^\pm|},\\
                  \partial_t p^\pm &= \mp \nabla c(x^\pm) |p^\pm|,\\
    \partial_t \phi^\pm_0 & = 0,\\
\partial_t M^\pm & = \mp(E + B^T M^\pm + M^\pm B + M^\pm C M^\pm), \\
\partial_t a^\pm_0 & = \pm \frac{1}{2 |p^\pm|} \left(
p^\pm\cdot \nabla c(x^\pm)
+\frac{c(x^\pm)\: p^\pm \cdot M  p^\pm}{|p^\pm|^2}
-c(x^\pm) \: \text{Tr}(M^\pm) 
\right) a^\pm_0,
    \end{align}
\end{subequations}
with
\begin{equation*}
        E= |p^\pm| D^2 c(x^\pm), \qquad B = \frac{p^\pm \otimes \nabla c(x^\pm)}{|p^\pm|}, \qquad
        C = \frac{c(x^\pm)}{|p^\pm|}{\rm Id}_{n\times n} - \frac{c(x^\pm)}{|p^\pm|^3}  p^\pm \otimes p^\pm.
       \end{equation*}
The higher order phase terms $\{\phi^\pm_\beta\}$  again satisfy linear ODEs, if
solved in the right order, and the higher order amplitude
terms 
$\{a^\pm_{j,\beta}\}$
satisfy a linear ODE system of the type \eq{amplitudeODE}.

\begin{remark}\lbrem{hamiltonian}
The leading order ODEs for both equations, and for general hyperbolic
equations, actually have a Hamiltonian structure,
\begin{subequations}\lbeq{GBODEGen}
\begin{align}
   \partial_t x &= \nabla_p H(x,p),\\
   \partial_t p &= -\nabla_x H(x,p),\\
   \partial_t \phi_{0} & = -H(x,p) + p\cdot\nabla_pH(x,p),
\end{align}
\end{subequations}
where $H=|p|^2/2+V(x)$ for the Schr\"odinger equation and
$H= \pm c(x)|p|$ for the two modes of the wave equation.
\end{remark}

\subsection{Initial Data}\lbsec{GBini}

Each Gaussian beam
$v_{k}(t,y,z)$ requires initial values
for the central ray and all of the amplitude and phase Taylor coefficients. The
appropriate choice of these initial values will make $u_k(0,y)$
asymptotically converge to the initial conditions in 
\eq{Schrodinger} and
\eq{WaveEq}.
As shown in \cite{LiuRunborgTanushev:10}, initial data for
the central ray and phase coefficients should be chosen as
follows, for the Schr\"odinger as well as the two modes of the
wave equation.
\begin{subequations}\lbeq{GBini}
\begin{align}
 x(0,z) &= z, \\ 
 p(0,z) &= \nabla \varphi_0(z), \\
 \phi_0(0,z) &= \varphi_0(z), \\
 M(0,z) &= D^2\varphi_0(z) + i\ {\rm Id}_{n\times n},\\ 
 \phi_{\beta}(0,z) &= \partial_y^\beta\varphi_0(z), \qquad |\beta|= 3,\ldots,k+1 \ .
\end{align}
\end{subequations}
For the Schr\"odinger equation,
initial values for the amplitude coefficients should be given as
\be{GBiniA}
 a_{j,\beta}(0,z) = \begin{cases} \partial_y^\beta B_0(z), & j=0,\\ 
 0, & j>0.
 \end{cases}
\ee
The construction is more complicated for the wave equation. Let
\begin{align*}
   \bar{A}^\pm_0(y,z) &= \frac12\left( B_0(y) + \frac{B_1(y)}{id_t\Phi^{\pm}_k(0,y-z,z)}\right),\\
   \bar{A}^\pm_{j+1}(y,z) &= -\frac12\frac{d_t\bar{a}^+_{j,k}(0,y-z,z)+
   d_t\bar{a}^-_{j,k}(0,y-z,z)}{id_t\Phi^{\pm}_k(0,y-z,z)},
\end{align*}
where
\begin{align*}
d_t\Phi^{\pm}_k(0,y-z,z)&:=
\partial_t\Phi^{\pm}_k(0,y-z,z)-
\partial_t x^{\pm}(0,z)\cdot \nabla_y\Phi^{\pm}_k(0,y-z,z),\\
d_t\bar{a}^{\pm}_{j,k}(0,y-z,z)&:=
\partial_t\bar{a}^{\pm}_{j,k}(0,y-z,z)-
\partial_t x^{\pm}(0,z)\cdot \nabla_y\bar{a}^{\pm}_{j,k}(0,y-z,z).
\end{align*}
%\verify{where $d_t$ denotes\ldots (Must use initial data above.)}
Then
\be{GBiniAwave}
 a^\pm_{j,\beta}(0,z) = \partial_y^\beta\bar{A}^\pm_{j}(y,z)|_{y=z}.
\ee
Note that the time derivatives $\partial_t\Phi_k^\pm$, $\partial_t x^\pm$
and $\partial_t\bar{a}^{\pm}_{j,k}$ are given by the right
hand side of the ODE system.

%%%%%%%%%%%%%%%%%%%%%%%%%%%%%%%%%%%%%%%%%%%%%%%%%%%%%%%%%%%%%%%%%%%%%%%%%%%

\section{Gaussian Beam Properties}\lbsec{GBProp}

%%%%%%%%%%%%%%%%%%%%%%%%%%%%%%%%%%%%%%%%%%%%%%%%%%%%%%%%%%%%%%%%%%%%%%%%%%%

In this section we collect some simple consequences
of assumptions (A1)--(A4) % in \sect{Prelim}
 for the 
Gaussian beam approximations, as well as some
other known results.

\subsection{Existence and Regularity}
\lbsec{GBexist}

From (A1) and (A3) it follows that
the Gaussian beam coefficient functions are well-defined
for all times $t\geq 0$ and initial positions $z\in\Real^n$.
We briefly motivate why.
By (A1) the right hand sides of the
ODEs for $(x(t,z),\, p(t,z))$ are globally Lipschitz, for
the Schr\"odinger equation.
For the two modes of the wave equation we use (A3) and
the fact that the Hamiltonian $\pm c(x)|p|$
is constant along the flow. From this it follows that for all $t$,
$$
0<p_{\rm min}:=\frac{c_{\rm min}}{c_{\rm max}}
\inf_{y\in\Real} |\nabla\varphi_0(y)|
\leq
|p^\pm(t,z)|\leq
\frac{c_{\rm max}}{c_{\rm min}}
|\nabla\varphi_0(z)|=:p_{\rm max}(z)<\infty,
$$ 
where $c_{\min}=\inf c(y)$ and $c_{\max}=\sup c(y) $.
The right hand sides of the ODE
for $(x^\pm(t,z),\, p^\pm(t,z))$ are globally Lipschitz for these
values of $p^\pm$.
It follows that unique solutions to the ODEs exist for all times.
Moreover, the choice of initial data and 
a result in \cite[Section 2.1]{Ralston:82} ensure that
the non-linear Riccati equations
for $M$ and $M^\pm$ also have solutions for all times.
The remaining coefficient functions are well-defined since they
satisfy linear ODEs with variable, continuous, coefficients.

Furthermore, the coefficient functions are smooth functions
of $t$ and $z$.
By (A2) and (A3)
all coefficient functions are solutions to ODEs with 
initial data that is
$C^\infty(\Real^n)$ in $z$. 
The right hand sides of the ODEs are also smooth,
for both equations, since $|p^\pm|\geq p_{\rm min}>0$ for the
wave equation.
The regularity of the initial data therefore persists  for $t>0$.
Hence, 
\be{coeffsmooth}
 x,\,x^\pm,\,p,\,p^\pm,
 \,\phi_{0},\,\phi_{0}^\pm,\,M,\,M^\pm,\,\phi_{j,\beta},\,\phi_{j,\beta}^\pm,\,a_{j,\beta},\,a_{j,\beta}^\pm \in C^\infty([0,\infty)\times \Real^n),
\ee
for all $j,\beta$.
Moreover, by the form of the ODEs for the amplitude coefficients
\eq{amplitudeODE} and the fact that initial data is compactly supported,
all amplitude coefficients will be compactly supported in $z$ for $t\geq 0$,
\be{acompact}
   {\rm supp}\,a_{j,\beta}(t,\,\cdot\,)\subset K_0,\quad
   {\rm supp}\,a^\pm_{j,\beta}(t,\,\cdot\,)\subset K_0,\qquad t\in [0,\infty).
\ee

We finally note that
none of the coefficient functions
$x$, $p$, $\phi_{0}$, $M$, $\phi_{j,\beta}$,
$a_{j,\beta}$,
and the corresponding functions for the wave equation,
depend on the order $k$ of the beam.
This is true since the ODEs and the initial data
for higher order coefficients functions
only involve lower order coefficient functions.
Hence, the higher order beams
have the same lower order coefficient functions as
the lower order beams.

\subsection{Initial Data}

For the initial data chosen as in \sect{GBini}, the following 
error estimate follows from a result in \cite{LiuRunborgTanushev:10}.
\begin{theorem}\lbtheo{InitialData}
Let $u_k$ be the Gaussian beam superposition approximation \eq{udef} to 
the Schr\"odinger equation
\eq{Schrodinger}
or \eq{udefwave} to the wave equation \eq{WaveEq}, 
with initial data determined as in \sect{GBini}.
Then, if $u$ is the corresponding exact solution,
there is a constant $C$ such that
\be{uinitest}
  \left\|  u_k(0,\,\cdot\,) - u(0,\,\cdot\,)  \right\|_{H^s} \leq
  \left\|  u_k(0,\,\cdot\,) - u(0,\,\cdot\,)  \right\|_{H_{\varepsilon}^s} \leq C \varepsilon^{\frac{k}{2}-s} \ ,\qquad \forall\varepsilon\in(0,1],
\ee
and, for the wave equation, 
\be{utinitest}
 \left\| \partial_tu_k(0,\,\cdot\,) - \partial_tu(0,\,\cdot\,)] \right\|_{H^{s-1}} \leq C \varepsilon^{\frac{k}{2}-s} \ ,
 \forall\varepsilon\in(0,1],
\ee
for $s\geq 1$.
\end{theorem}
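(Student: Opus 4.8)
The plan is to reduce the statement to a Taylor-expansion error estimate for the initial data, which is exactly what is established in \cite{LiuRunborgTanushev:10}. First I would note that the first inequality in \eq{uinitest}, namely $\|\,\cdot\,\|_{H^s}\leq\|\,\cdot\,\|_{H^s_\varepsilon}$, is immediate from the definitions in \eq{Heps}: since $0<\varepsilon\leq 1$ by (A4), each factor $\varepsilon^{|\alpha|-s}$ with $|\alpha|\leq s$ is $\geq 1$, so the $\varepsilon$-scaled norm dominates the unscaled one term by term. Thus the content is the second inequality, the bound on $\|u_k(0,\cdot)-u(0,\cdot)\|_{H^s_\varepsilon}$, and analogously \eq{utinitest}.

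For that bound I would proceed as follows. At $t=0$ the exact solution has initial data $B_0(y)e^{i\varphi_0(y)/\varepsilon}$, while $u_k(0,y)$ is the superposition \eq{udef} (or \eq{udefwave}) evaluated at $t=0$, with the beam coefficients initialized as in \sect{GBini}. Because the initial ray map is $x(0,z)=z$ and the phase/amplitude Taylor coefficients are initialized to match the derivatives of $\varphi_0$ and $B_0$ at $z$ (see \eq{GBini}, \eq{GBiniA}, \eq{GBiniAwave}), the difference $u_k(0,y)-u(0,y)$ is, after the change of variables $z\mapsto y-z$ inside the Gaussian integral, exactly the error committed by replacing $\varphi_0$ and $B_0$ by their Taylor polynomials of the appropriate degree (degree $k+1$ in the phase, degree $k-1$ in the amplitude, with the $\varepsilon^j$ hierarchy in \eq{Adef1}) against a Gaussian weight of width $\sqrt{\varepsilon}$. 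Each $\varepsilon$-scaled derivative $\varepsilon^{|\alpha|-s}\partial_y^\alpha$ acting on this expression produces, via the chain rule on $e^{i\varphi_0/\varepsilon}$ and on the Gaussian, bounded coefficient functions times the same kind of Taylor remainder; the remainder contributes a factor $|y-z|^{k+2}$ (phase) or $|y-z|^{k}$ (amplitude, at leading order in $j$), and integrating $|y-z|^m e^{-c|y-z|^2/\varepsilon}$ against the normalizing prefactor $(2\pi\varepsilon)^{-n/2}$ yields $O(\varepsilon^{m/2})$. Collecting the smallest such power over all the remainder terms in the $k$-th order beam gives the claimed $\varepsilon^{k/2-s}$. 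All the coefficient functions and cutoffs appearing are in $C^\infty$ with the relevant bounds by \eq{coeffsmooth}, \eq{acompact} and \lem{GBphaseOKforQ}, so the constant $C$ is finite and independent of $\varepsilon$; the $t$-derivative estimate \eq{utinitest} for the wave equation follows the same way using that $\partial_t u_k(0,y)$ is obtained by differentiating \eq{udefwave} in $t$ and invoking the ODEs, and the definition of $\bar A_0^\pm,\bar A_{j+1}^\pm$ was precisely arranged to match $\varepsilon^{-1}B_1 e^{i\varphi_0/\varepsilon}$ to the required order.

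Since the statement is quoted as following "from a result in \cite{LiuRunborgTanushev:10}", the cleanest route is simply to invoke that reference for the case of general $s$, or to observe that the $s=0$ case is Theorem~X there and that the higher-$s$ case is obtained by the identical argument with the extra derivative count absorbed by the $\varepsilon^{|\alpha|-s}$ weights — this is exactly the bookkeeping that turns $\varepsilon^{k/2}$ into $\varepsilon^{k/2-s}$.

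The main obstacle, and the only place that requires genuine care, is the uniform control of the Taylor remainder terms after the $\varepsilon$-scaled derivatives have been distributed: one must check that every term produced by differentiating the product of amplitude polynomial, Gaussian, oscillatory exponential and cutoff either (i) still carries enough powers of $|y-z|$ to compensate the $\varepsilon$-factors it picks up, or (ii) is a genuine remainder of order $\geq k+2$ (resp.\ $\geq k$). The potentially dangerous terms are those where $\partial_y$ hits $e^{i\varphi_0/\varepsilon}$, each producing an $\varepsilon^{-1}$; but these are matched against the fact that at $t=0$ the beam phase $\Phi_k$ reproduces the Taylor polynomial of $\varphi_0$, so the leading, middle and next several orders cancel exactly and only the remainder survives, and the remainder's $|y-z|^{k+2}$ integrates to $\varepsilon^{(k+2)/2}$, which beats the worst-case $\varepsilon^{-s}$ coming from the weights once $|\alpha|\leq s$. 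Making this cancellation-versus-counting argument airtight, uniformly in $\varepsilon\in(0,1]$ and in $t$ near $0$ (for the wave-equation $\partial_t$ statement), is the crux; everything else is routine Gaussian moment computation.
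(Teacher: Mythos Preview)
Your proposal is correct and follows essentially the same approach as the paper: note the trivial inequality $\|\cdot\|_{H^s}\leq\|\cdot\|_{H^s_\varepsilon}$ for $\varepsilon\leq 1$, then invoke the initial-data estimate from \cite{LiuRunborgTanushev:10}. The paper's proof is slightly more streamlined than your sketch because the cited result (Lemma~3.6 there) already gives the per-derivative bounds $\|\partial_y^\alpha(u_k-u)(0,\cdot)\|_{L^2}\leq C_{0,\alpha}\varepsilon^{k/2-|\alpha|}$ and $\|\partial_y^\alpha\partial_t(u_k-u)(0,\cdot)\|_{L^2}\leq C_{1,\alpha}\varepsilon^{k/2-|\alpha|-1}$ for each $\alpha$, so no reworking of the Taylor-remainder argument is needed---one simply multiplies by $\varepsilon^{|\alpha|-s}$ and sums over $|\alpha|\leq s$.
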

\begin{proof}
It was shown in 
\cite[Lemma 3.6]{LiuRunborgTanushev:10} that
there are constants $C_{0,\alpha}$
and $C_{1,\alpha}$ such that
\begin{align*}
 \left\| \partial_y^\alpha u_k(0,\,\cdot\,) - \partial_y^\alpha u(0,\,\cdot\,) \right\|_{L^2} &\leq C_{0,\alpha} \varepsilon^{\frac{k}{2}-|\alpha|} \ ,
\end{align*}
and, for the wave equation \eq{WaveEq}
\begin{align*}
 \left\| \partial_y^\alpha \partial_t u_k(0,\,\cdot\,) -  \partial_y^\alpha\partial_tu(0,\,\cdot\,) \right\|_{L^2} &\leq C_{1,\alpha} \varepsilon^{\frac{k}{2}-|\alpha|-1} \ .
\end{align*}
Clearly $||\cdot||_{H^s}\leq ||\cdot||_{H_\varepsilon^s}$ when $\varepsilon\leq 1$,
and
from the definition in \eq{Heps},
\begin{align*}
\left\| u_k(0,\,\cdot\,) - u(0,\,\cdot\,)  \right\|_{H_{\varepsilon}^s}
&=
\sum_{|\alpha|\leq s}    
    \varepsilon^{|\alpha|-s}\left\|\partial_y^\alpha
    u_k(0,\,\cdot\,) - \partial_y^\alpha u(0,\,\cdot\,) \right\|_{L^2(\Real^n)}\\
    &\leq
    \varepsilon^{\frac{k}{2}-s}
\sum_{|\alpha|\leq s} C_{0,\alpha}   
=:    C \varepsilon^{\frac{k}{2}-s}.
\end{align*}
This shows \eq{uinitest}.
The estimate \eq{utinitest} follows in a similar way.
\end{proof}

\subsection{Phase and Ray Properties}

The Gaussian beam phases and central rays
have the following properties, as shown in \cite[Lemma 3.4]{LiuRunborgTanushev:10}.
\begin{lemma}\lblem{GBphaseOKforQ}
Under assumptions (A1)--(A4), 
for a given compact set $K_0\subset\Real^n$, final time $T>0$
and beam order $k$, there is a Gaussian beam cutoff width $\eta_0>0$ such that
the Gaussian beam phase $\Phi$ and central ray $x$ have
the following properties for all $0<\eta\leq \eta_0$:
\begin{itemize}
\item[{\bf (P1)}] $x(t,z)\in C^\infty([0,T]\times\Real^n)$,
\item[{\bf (P2)}] $\Phi(t,y,z)\in C^\infty([0,T]\times\Real^n\times\Real^n)$, 
\item[{\bf (P3)}]
$\nabla\Phi(t,0,z)$ is real and 
there is a constant $C$ such that 
\begin{align*}
  |\nabla_y\Phi(t,0,z)-\nabla_y\Phi(t,0,z')|+|x(t,z)-x(t,z')|\geq C|z-z'|\ ,
\end{align*}
for all  $t\in[0,T]$ and $z,z'\in K_0$.

\item[{\bf (P4)}]
there exists a constant $w_4>0$ such that
\begin{align*}
  \Im\Phi(t,y,z)\geq w_4|y|^2\ ,\qquad \forall t\in[0,T],\ z\in K_0,
\end{align*}
when $|y|\leq 2\eta$ (or for all $y$ if $\eta=\infty$).
\end{itemize}
Here, $\Phi$ and $x$ can be 
either 
the phase and central ray
of  the Schr\"odinger 
equation,  $\Phi_k$ and $x$, or 
of one of the wave equation modes, 
$\Phi_k^\pm$ and $x^\pm$.
When $k=1$, 
$\eta$ can take any value in $(0,\infty]$,
that is $\eta_0=\infty$.
\end{lemma}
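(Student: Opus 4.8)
\emph{Proof plan.} Properties (P1) and (P2) require no argument beyond what is already recorded: by \eq{coeffsmooth} the central ray $x$ (and $x^\pm$) and all phase coefficients $\phi_0$, $p$, $M$, $\phi_\beta$ lie in $C^\infty([0,\infty)\times\Real^n)$, and since $\Phi_k$ in \eq{phidef} is a polynomial in $y$ with these functions as coefficients, it is smooth jointly in $(t,y,z)$. The work is therefore in (P3) and (P4).

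For (P3), I would start from $\nabla_y\Phi_k(t,0,z)=p(t,z)$ (read off \eq{phidef}), and note that $x,p$ (resp.\ $x^\pm,p^\pm$) are real because they solve the real systems \eq{GBODESch}, \eq{GBODEWave} from the real initial data $(z,\nabla\varphi_0(z))$; hence $\nabla_y\Phi(t,0,z)$ is real. The quantitative part, $|x(t,z)-x(t,z')|+|p(t,z)-p(t,z')|\ge C|z-z'|$, I would obtain from the fact recalled in \rem{hamiltonian} that $(x(t,z),p(t,z))$ is the image of $(z,\nabla\varphi_0(z))$ under the Hamiltonian flow. That flow is a diffeomorphism for each $t$: for the Schr\"odinger equation because the vector field $(p,-\nabla V(x))$ is globally Lipschitz, and for the two wave modes because $\pm c(x)|p|$ is smooth on the region $|p|\ge p_{\rm min}>0$ in which the trajectories remain (see \sect{GBexist}). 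Thus $(z,\nabla\varphi_0(z))$ is recovered from $(x(t,z),p(t,z))$ by the backward flow, which is jointly $C^1$ in $t$ and the phase-space variable on the compact set swept out for $t\in[0,T]$, $z\in K_0$, hence Lipschitz in that variable with a constant $L$ uniform in $t$; then $|z-z'|\le|(z,\nabla\varphi_0(z))-(z',\nabla\varphi_0(z'))|\le L(|x(t,z)-x(t,z')|+|p(t,z)-p(t,z')|)$, so $C=1/L$.

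For (P4), the plan is to split $\Phi_k$ in \eq{phidef} into its quadratic part $\tfrac12\,y\cdot M(t,z)\,y$ and the cubic-and-higher-order remainder. Since $\phi_0$ and $p$ are real, $\Im\Phi_k(t,y,z)=\tfrac12\,y\cdot\Im M(t,z)\,y+R(t,y,z)$, where $R$ is the imaginary part of the remainder. I would then invoke the classical analysis of the matrix Riccati equation in \cite[Section~2.1]{Ralston:82}: since the initial data $M(0,z)=D^2\varphi_0(z)+i\,{\rm Id}$ (and likewise $M^\pm(0,z)$) has positive-definite imaginary part, $\Im M(t,z)>0$ for all $t$; combined with continuity and compactness of $[0,T]\times K_0$ this gives a uniform $w>0$ with $y\cdot\Im M(t,z)\,y\ge 2w|y|^2$. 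The remainder I would bound using $|y^\beta|\le|y|^{|\beta|}$ and the uniform bounds $|\phi_\beta(t,z)|\le C_\beta$ (continuity on a compact set), obtaining $|R(t,y,z)|\le C|y|^3$ for $|y|\le 1$. Choosing $\eta_0>0$ so small that $2\eta_0\le\min(1,\,w/(2C))$ then yields $\Im\Phi_k(t,y,z)\ge w|y|^2-C|y|^3\ge\tfrac12 w|y|^2$ whenever $|y|\le 2\eta\le 2\eta_0$, which is (P4) with $w_4=w/2$. When $k=1$ there is no remainder and $\Im\Phi_1=\tfrac12\,y\cdot\Im M\,y\ge w|y|^2$ for all $y$, so $\eta_0=\infty$ is admissible. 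The wave phases $\Phi_k^\pm$ and rays $x^\pm$ are treated by the identical argument, using the Riccati equation for $M^\pm$ and $|p^\pm|\ge p_{\rm min}>0$.

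I expect the only genuine difficulty to be in (P4), namely showing that $\Im M$ (and $\Im M^\pm$) stays positive definite \emph{uniformly} on $[0,T]\times K_0$: this is exactly where the special structure of the Riccati equation and the result of \cite{Ralston:82} are indispensable, after which uniformity follows softly from compactness and the smoothness in \eq{coeffsmooth}. The reality statements, the flow-diffeomorphism argument for (P3), and the absorption of the Taylor remainder in (P4) are all routine.
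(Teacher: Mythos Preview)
Your outline is correct. Note, however, that the paper does not actually prove this lemma: it simply cites \cite[Lemma~3.4]{LiuRunborgTanushev:10} for the result. Your argument---smoothness from \eq{coeffsmooth} for (P1)--(P2), the Hamiltonian-flow diffeomorphism for (P3), and Ralston's positivity of $\Im M$ plus absorption of the cubic remainder for (P4)---is the standard route and is almost certainly what the cited proof does as well.
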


These properties of the phase and the central ray are of
great importance in the subsequent estimates. 
In fact, they are necessary for the Gaussian beam approximation to
be accurate.
Following this lemma we therefore make the definition:
\begin{definition}
The cutoff width $\eta$ 
used for the Gaussian beam approximation of
\eq{Schrodinger} and \eq{WaveEq}
is called {\em admissible} for $K_0$, $T$ and $\Phi$
if it is small enough in the sense of \lem{GBphaseOKforQ}.
\end{definition}

We note that if $\eta$ is admissible then
$\eta'$ is also admissible if $\eta'\leq\eta$.
Moreover, the difference between two solutions with
different admissible cutoff widths, is exponentially small in $\varepsilon$,
as seen in the following lemma.
\begin{lemma}\lblem{cutoffdiff}
If $\eta$, $\eta'$ are both admissible cutoff widths, and
$u_k$, $u_k'$ are the corresponding 
Gaussian beam superpositions for the Schr\"odinger
equation or the wave equation, then
$$
  \sup_{t\in[0,T]}||u_k(t,\,\cdot\,)-u_k'(t,\,\cdot\,)||_{L^\infty(\Real^n)}\leq C e^{-w/\varepsilon},
$$
for some constants $C$ and $w>0$.
\end{lemma}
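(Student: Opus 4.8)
The plan is to estimate the difference $u_k - u_k'$ directly from the defining integrals \eq{udef} (or \eq{udefwave}), exploiting that the two superpositions share \emph{all} Gaussian beam coefficient functions — only the cutoff width differs. Writing
$$
u_k(t,y) - u_k'(t,y) =
\left(\frac{1}{2\pi\varepsilon}\right)^{\frac n2}
\int_{K_0} v_k(t,y,z)\bigl[\varrho_\eta(y-x(t,z)) - \varrho_{\eta'}(y-x(t,z))\bigr]\,dz,
$$
the key observation is that the integrand vanishes unless $|y-x(t,z)| \geq \min(\eta,\eta')$, since both cutoffs equal $1$ for arguments of norm at most $\min(\eta,\eta')$. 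Hence the $z$-integration is effectively restricted to the set where $|y-x(t,z)|\geq\eta_* := \min(\eta,\eta')>0$, and on that set we can use the Gaussian decay of the beam.

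First I would record the pointwise bound on $|v_k|$ coming from \lem{GBphaseOKforQ}(P4): on the support of $\varrho_\eta$ and $\varrho_{\eta'}$ (both admissible, so (P4) applies to each) we have $\Im\Phi_k(t,y-x(t,z),z)\geq w_4|y-x(t,z)|^2$, while the polynomial amplitude $A_k$ and the real part of the phase are bounded on $[0,T]\times(\text{2}\eta_0\text{-ball})\times K_0$ by \eq{coeffsmooth} together with \eq{suplipfin}; the higher-order phase terms $\tfrac1{\beta!}\phi_\beta y^\beta$ for $|\beta|\geq 3$ are dominated by the quadratic term for $|y|$ bounded, after possibly shrinking $\eta_0$ (this is exactly the content built into admissibility). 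Therefore
$$
|v_k(t,y,z)\,[\varrho_\eta - \varrho_{\eta'}]|
\;\leq\; C\,(1+|y-x(t,z)|)^{N} e^{-w_4|y-x(t,z)|^2/\varepsilon}\,\mathbbm{1}\{|y-x(t,z)|\geq\eta_*\},
$$
for suitable $C,N$ depending on $k,T,K_0$. On the region $|y-x(t,z)|\geq\eta_*$ we split $e^{-w_4 r^2/\varepsilon} = e^{-w_4 r^2/(2\varepsilon)}\,e^{-w_4 r^2/(2\varepsilon)} \leq e^{-w_4 r^2/(2\varepsilon)}\,e^{-w_4\eta_*^2/(2\varepsilon)}$; the factor $e^{-w_4\eta_*^2/(2\varepsilon)}$ comes out of the integral, giving the desired $e^{-w/\varepsilon}$ with $w = w_4\eta_*^2/2$.

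It then remains to bound the residual integral
$$
\left(\frac{1}{2\pi\varepsilon}\right)^{\frac n2}\int_{K_0} C\,(1+|y-x(t,z)|)^N e^{-w_4|y-x(t,z)|^2/(2\varepsilon)}\,dz,
$$
uniformly in $y\in\Real^n$ and $t\in[0,T]$. The polynomial factor $(1+|y-x|)^N$ is harmless: absorb it, at the cost of halving the exponent again, into $C'e^{-w_4|y-x|^2/(4\varepsilon)}$. What survives is a standard Gaussian-integral-against-$dz$ estimate; here one uses that the ray map $z\mapsto x(t,z)$ is bi-Lipschitz on $K_0$ uniformly in $t$, which follows from (P3) and the smoothness \eq{coeffsmooth} (indeed (P3) gives a lower bound $|x(t,z)-x(t,z')|\geq C|z-z'| - |\nabla_y\Phi(t,0,z)-\nabla_y\Phi(t,0,z')|$; even without the full bi-Lipschitz property one can cover $K_0$ by finitely many pieces on which $x$ is a diffeomorphism, as is standard in this theory, or simply note that $\int_{K_0}e^{-c|y-x(t,z)|^2/\varepsilon}dz$ is bounded by $C\varepsilon^{n/2}$ whenever $x(t,\cdot)$ has Jacobian bounded below away from a measure-zero caustic set, and absorb the caustic contribution crudely by $|K_0|$). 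Either way one gets the bound $\leq C(\varepsilon^{-n/2})\cdot\varepsilon^{n/2}=C$, which is $O(1)$; combined with the extracted exponential factor this yields
$$
\sup_{t\in[0,T]}\|u_k(t,\cdot)-u_k'(t,\cdot)\|_{L^\infty(\Real^n)}\leq C e^{-w/\varepsilon}.
$$
The wave-equation case is identical, applied to each mode $v_k^\pm$ separately and summed. The main obstacle is the uniform-in-$y$ control of the last integral: one must be careful that the bound does not degrade near caustics of the ray map $z\mapsto x(t,z)$. The cleanest way around this is to avoid any change of variables and instead bound the integrand pointwise — on $\{|y-x(t,z)|\geq\eta_*\}$ the integrand is at most $C\varepsilon^{-n/2}e^{-w_4\eta_*^2/(2\varepsilon)}$ times something integrable, and since $e^{-w_4\eta_*^2/(2\varepsilon)}$ beats any power $\varepsilon^{-n/2}$, the crude estimate $\int_{K_0}(\cdots)dz\leq |K_0|\cdot C\varepsilon^{-n/2}e^{-w_4\eta_*^2/(2\varepsilon)}$ already gives $Ce^{-w'/\varepsilon}$ for any $w'<w_4\eta_*^2/2$, with no need to understand the caustic structure at all.
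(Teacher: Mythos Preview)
Your proof is correct and follows essentially the same approach as the paper: restrict to $|y-x(t,z)|\geq\eta_*:=\min(\eta,\eta')$, use (P4) to get $|v_k|\leq C(1+|y-x|^{k-1})e^{-w_4|y-x|^2/\varepsilon}$, absorb the polynomial by halving the exponent, and then bound the integral crudely by $|K_0|\cdot\varepsilon^{-n/2}e^{-w_4\eta_*^2/2\varepsilon}\leq Ce^{-w/\varepsilon}$. The middle paragraph about bi-Lipschitz ray maps and caustics is an unnecessary detour---as you yourself realize at the end, the crude bound $\int_{K_0}\leq|K_0|\cdot(\text{sup of integrand})$ suffices and avoids any change of variables; the paper goes directly to that crude bound without the digression.
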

\begin{proof}
We consider the Schr\"odinger case.
Suppose $\eta'<\eta\leq\infty$.
From
 the construction of beams in \sect{GBdef} 
 together with 
 \eq{SupLipdef} and \eq{coeffsmooth},
there is a constant $C$ such
that $|A_k(t,y,z)|\leq C(1+|y|^{k-1})$ for all
$t\in[0,T]$, $z\in K_0$ and $\varepsilon\in(0,1]$.
Then using (P4) in \lem{GBphaseOKforQ}, with $t\in[0,T]$,
\begin{align*}
|u_k(t,y)-u_k'(t,y)| &= 
    \left(\frac{1}{2\pi\varepsilon}\right)^\frac{n}{2} 
   \left|\int_{K_0} v_{k}(t,y,z) [\varrho_\eta(y-x(t,z))-\varrho_{\eta'}(y-x(t,z))] dz
\right|\\
&\leq 
    \left(\frac{1}{2\pi\varepsilon}\right)^\frac{n}{2} 
   \int_{K_0\setminus \{z\,;\,|y-x|\leq \eta'\}}  |v_{k}(t,y,z)|dz\\
&=
    \left(\frac{1}{2\pi\varepsilon}\right)^\frac{n}{2} 
   \int_{K_0\setminus \{z\,;\,|y-x|\leq \eta'\}} |A_k(t,y-x,z)| e^{-\Im\Phi(t,y-x,z)/\varepsilon} dz\\
&\leq C'\varepsilon^{-n/2}
   \int_{K_0\setminus \{z\,;\,|y-x|\leq \eta'\}} \left(1+ |y-x|^{k-1}\right)
   e^{-w_4|y-x|^2/\varepsilon} dz.
\end{align*}
We now use the fact that for given $p\geq 0$ and $c>0$ there is a constant $D$
such that $|x|^p\exp(-cx^2/\varepsilon)\leq D\exp(-cx^2/2\varepsilon)$ for all $x$.
Then,
\begin{align*}
|u_k(t,y)-u_k'(t,y)| &\leq
C'\varepsilon^{-n/2}
\int_{K_0\setminus \{z\,;\,|y-x|\leq \eta'\}}(1+D)
   e^{-w_4|y-x|^2/2\varepsilon} dz\\
&\leq C'\varepsilon^{-n/2}|K_0|(1+D)e^{-w_4{\eta'}^2/2\varepsilon}
\leq C''e^{-w/\varepsilon},
\end{align*}
for some $0<w< w_4{\eta'}^2/2$.
The wave equation case is proved by considering each mode
separately, in the same way.
\end{proof}

\subsection{Representation with Oscillatory Integrals}

An important step in the Gaussian beam error estimates
in \cite{LiuRunborgTanushev:10} is to bound the residual 
that appears when
the Gaussian beam approximation is entered into the PDE.
Up to a small term in $\varepsilon$, this residual can be written as
a sum of oscillatory integrals 
belonging to a family defined as follows.
For a phase $\Phi$, central ray $x$, multi-index $\alpha$, 
compact set $K_0\subset\Real^n$,
cutoff function $\varrho_\eta$ as given in
\eq{cutoff} %with cutoff radius $0<\eta\leq\infty$ 
and a continuous function $g(t,y,z,\varepsilon)$, we let
\begin{align}\lbeq{IntDef}
   & \Int^\alpha_{\Phi,x,g}(t,y) \\ & \qquad := \varepsilon^{-\frac{n+|\alpha|}{2}}\int_{K_0} g(t,y,z,\varepsilon)(y-x(t,z))^\alpha e^{i\Phi(t,y-x(t,z),z)/\varepsilon} \varrho_\eta(y-x(t,z))dz \ . \notag
\end{align}
Indeed, the following lemma was shown in \cite{LiuRunborgTanushev:10}.
\begin{lemma}
\lblem{PuInTermsOfQ} 
  Under assumptions (A1)--(A4) the
  Schr\"odinger operator $P^\varepsilon$ and the
  wave equation operator $P$ 
  in \eq{PDOs}
  acting on the Gaussian beam superposition $u_k$
  can be accurately approximated by a finite sum of oscillatory integrals
  of the type \eq{IntDef},
\begin{align*}
P^\varepsilon[u_k](t,y) &=
\varepsilon^{\frac{k}{2}+1} \sum_{j=1}^{J} \varepsilon^{\ell_j}\Int^{\alpha_j}_{\Phi_k,x,g_j}(t,y)  +\Ordo(\varepsilon^\infty), \\
P[u_k](t,y) &=
\varepsilon^{\frac{k}{2}-1} \sum_{j=1}^{J} \varepsilon^{\ell_j}
\left({\Int}
^{\alpha_j}_{\Phi^+_k,x^+,g^+_j}(t,y)+
{\Int}
^{\alpha_j}_{\Phi^-_k,x^-,g^-_j}(t,y)\right)  +\Ordo(\varepsilon^\infty),
\end{align*}
  where $\ell_j\geq 0$,
  and $\eta$ is assumed to be admissible for $K_0$, $T$ and the 
  corresponding Gaussian beam phase(s), $\Phi_k$ 
  or $\Phi_k^\pm$. Moreover,
 $(\Phi_k,x)$ or $(\Phi_k^\pm,x^\pm)$,
  have properties (P1)--(P4), and all
  $g_j$, $g_j^\pm$ have the following property:
\begin{itemize}
\item[{\bf (P5)}]
$g(t,y,z)\in C^\infty([0,T]\times \Real^n\times K_0)$ is independent
of $\varepsilon$ and
 for any multi-index $\beta$ 
 there exists a constant $C_\beta$
such that
\begin{align*}
   \sup_{y\in\Real^n} \left|\partial_y^\beta g(t,y,z)\right|\leq C_\beta\
   \ ,\qquad \forall t\in[0,T],\ z\in K_0.
\end{align*}
\end{itemize}
\end{lemma}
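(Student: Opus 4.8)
The plan is to apply the operators $P^\varepsilon$ and $P$ directly to the integral representations \eq{udef} and \eq{udefwave}, and to use the fact that the coefficient ODEs of \sect{GBODE} together with the initial data of \sect{GBini} were designed precisely so that the PDE residual of a single beam vanishes, in the sense of a Taylor expansion about its own central ray, to the order prescribed by $k$. Since all coefficient functions and the cutoff are smooth by \eq{coeffsmooth}, and the $z$-integration is over the compact set $K_0$, we may differentiate under the integral, $P^\varepsilon[u_k](t,y)=(2\pi\varepsilon)^{-n/2}\int_{K_0}P^\varepsilon[v_k\varrho_\eta(y-x(t,z))]\,dz$, and similarly for each mode of the wave equation. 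By the Leibniz rule we split $P^\varepsilon[v_k\varrho_\eta]$ into the term $\varrho_\eta(y-x)\,P^\varepsilon[v_k]$, in which all derivatives fall on $v_k$, and a remainder in which at least one derivative falls on $\varrho_\eta(y-x)$. Every remainder term is supported in the annulus $\eta\le|y-x(t,z)|\le 2\eta$, where (P4) of \lem{GBphaseOKforQ} gives $\Im\Phi_k\ge w_4\eta^2>0$, so $|e^{i\Phi_k/\varepsilon}|\le e^{-w_4\eta^2/\varepsilon}$; combined with the $\varepsilon^{-n/2}$ prefactor, the boundedness of the coefficient functions on $[0,T]\times K_0$, and integration over the compact $K_0$, this remainder is $\Ordo(\varepsilon^\infty)$, by the same argument as in the proof of \lem{cutoffdiff}.

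It remains to analyze $\varrho_\eta(y-x)\,P^\varepsilon[v_k]$. With $v_k=A_ke^{i\Phi_k/\varepsilon}$ (arguments $(t,y-x(t,z),z)$), the chain and product rules — which, because $x$ depends on $t$, produce derivatives of $\Phi_k$ and $A_k$ taken following the central ray, as in the $d_t$ notation of \sect{GBini} — show that $e^{-i\Phi_k/\varepsilon}P^\varepsilon[v_k]$ is a finite combination, with explicit powers of $\varepsilon$, of an eikonal-type bracket (for Schr\"odinger, $d_t\Phi_k+\tfrac12|\nabla_y\Phi_k|^2+V$; for the wave modes, $c^2|\nabla_y\Phi_k^\pm|^2-(d_t\Phi_k^\pm)^2$), a transport-type bracket, and $\Delta_y A_k$, all regarded as functions of $w:=y-x(t,z)$ and $(t,z)$; for Schr\"odinger the leading power is $\varepsilon^0$, for the wave equation it is $\varepsilon^{-2}$. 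Expanding the non-polynomial ingredients ($V$, $c$ and their derivatives at $x(t,z)+w$) in Taylor series about $w=0$ with integral remainder turns each bracket into a polynomial in $w$ with coefficients smooth in $(t,z)$, plus a remainder of the form (smooth in $(t,y,z)$)$\times w^\gamma$ with $|\gamma|$ large. The ODEs \eq{GBODESch}, \eq{GBODEWave}, the recursive linear ODEs for $\{\phi_\beta\}$, $\{\phi_\beta^\pm\}$, and the amplitude systems \eq{amplitudeODE}, with the initial data \eq{GBini}--\eq{GBiniAwave}, are exactly the conditions that annihilate the low-order Taylor coefficients of these brackets. Hence $e^{-i\Phi_k/\varepsilon}P^\varepsilon[v_k]$ is, modulo the cutoff terms, a finite sum $\sum_j\varepsilon^{q_j}g_j(t,y,z)\,w^{\alpha_j}$ in which the order-counting built into the construction forces $q_j+|\alpha_j|/2\ge k/2+1$ (resp.\ $\ge k/2-1$ for the wave equation), each $g_j$ is $\varepsilon$-independent and smooth, and — because $V,c\in C^\infty_b$ by (A1) and $K_0$, $[0,T]$ are compact — each $g_j$ has all $y$-derivatives bounded uniformly on $[0,T]\times\Real^n\times K_0$; this is (P5).

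To finish, substitute back, and note that by the definition \eq{IntDef} (after absorbing the constant $(2\pi)^{-n/2}$ into $g_j$) one has $\varepsilon^{-n/2}\int_{K_0}g_j(y-x)^{\alpha_j}e^{i\Phi_k/\varepsilon}\varrho_\eta\,dz=\varepsilon^{|\alpha_j|/2}\,\Int^{\alpha_j}_{\Phi_k,x,g_j}$. This yields $P^\varepsilon[u_k]=\varepsilon^{k/2+1}\sum_j\varepsilon^{\ell_j}\Int^{\alpha_j}_{\Phi_k,x,g_j}+\Ordo(\varepsilon^\infty)$ with $\ell_j:=q_j+|\alpha_j|/2-k/2-1\ge 0$, and analogously $P[u_k]=\varepsilon^{k/2-1}\sum_j\varepsilon^{\ell_j}\bigl(\Int^{\alpha_j}_{\Phi^+_k,x^+,g^+_j}+\Int^{\alpha_j}_{\Phi^-_k,x^-,g^-_j}\bigr)+\Ordo(\varepsilon^\infty)$, now with the $-1$ in the exponent replaced by $+1$ and the two wave modes treated separately. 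Properties (P1)--(P4) for $(\Phi_k,x)$ or $(\Phi_k^\pm,x^\pm)$ are provided by \lem{GBphaseOKforQ} under the admissibility of $\eta$; and the division by $d_t\Phi_k^\pm$ appearing in \eq{GBiniAwave} is harmless because $d_t\Phi_k^\pm(0,0,z)=\mp c(z)|\nabla\varphi_0(z)|$, which is bounded away from zero by (A1) and (A3).

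The main obstacle is the order-counting in the second paragraph: one must check carefully, level by level through the recursive construction of the phase coefficients $\{\phi_\beta\}$ and the amplitude coefficients $\{a_{j,\beta}\}$, that after all the designed cancellations every surviving monomial $\varepsilon^q w^\alpha$ satisfies $q+|\alpha|/2\ge k/2+1$, keeping track of how the truncation degree $|\beta|\le k-2j-1$ of $\bar a_{j,k}$ in \eq{Adef2} and the $\varepsilon^j$ weights in \eq{Adef1} interact with the number of vanishing Taylor orders of each bracket. This is precisely the bookkeeping carried out in \cite{LiuRunborgTanushev:10}, on which we rely.
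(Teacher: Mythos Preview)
Your proposal is correct and follows essentially the same approach as the paper, which in fact does not prove this lemma at all but simply cites \cite{LiuRunborgTanushev:10}; your sketch is an accurate outline of the argument carried out there. One small wording slip: in your final paragraph the phrase ``with the $-1$ in the exponent replaced by $+1$'' is backwards (the wave-equation prefactor is $\varepsilon^{k/2-1}$, not $\varepsilon^{k/2+1}$), and the remark about the division by $d_t\Phi_k^\pm$ concerns the initial-data construction rather than the residual computation, so it is tangential here.
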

\begin{remark}\lbrem{einfterm}
A closer inspection of the proof of this lemma in \cite{LiuRunborgTanushev:10}
reveals that also the derivatives with respect to $(t,y)$ 
of the exponentially small terms
$\Ordo(\varepsilon^\infty)$ are exponentially small in $\varepsilon$.
\end{remark}

The key estimate in \cite{LiuRunborgTanushev:10} used to
bound the residuals $P^\varepsilon[u_k]$ and $P^\varepsilon[u]$
is the following theorem, which gives an $\varepsilon$-independent
$L^2$ estimate of the integrals in \eq{IntDef}.
\begin{theorem}\lbtheo{IestL2}
If the phase $\Phi$ and central ray $x$ have properties (P1)--(P4),
and $g$ has property (P5), then there is a constant $C$
such that, for all $\varepsilon\in(0,1]$,
\be{Il2}
   \sup_{t\in[0,T]}\left\|\Int^\alpha_{\Phi,x,g}(t,\,\cdot\,)\right\|_{L^2} \leq C.
\ee
\end{theorem}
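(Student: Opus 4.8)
The plan is to establish the $L^2$ bound \eq{Il2} by a combination of stationary-phase type reasoning and, more robustly, by exploiting the positivity of $\Im\Phi$ from property (P4) together with the non-degeneracy property (P3). The central ray map $z\mapsto x(t,z)$ need not be injective (caustics are allowed), so one cannot simply change variables to $x=x(t,z)$ on all of $K_0$. However, (P3) says that the combined map $z\mapsto (x(t,z),\nabla_y\Phi(t,0,z))$ is bi-Lipschitz onto its image, uniformly in $t\in[0,T]$, which is precisely what is needed to control the superposition. The key observation is that the quadratic-in-$y$ Gaussian factor $e^{-\Im\Phi/\varepsilon}\sim e^{-w_4|y-x(t,z)|^2/\varepsilon}$ localizes the $z$-integration in \eq{IntDef} to a ball of radius $O(\sqrt\varepsilon)$ around the set $\{z: x(t,z)=y\}$, and the $\varepsilon^{-(n+|\alpha|)/2}$ prefactor together with the $(y-x)^\alpha$ factor is exactly calibrated so that the resulting $L^2$ norm is $O(1)$.

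Concretely, I would first reduce to the pointwise estimate
\[
   \left|\Int^\alpha_{\Phi,x,g}(t,y)\right| \le C\varepsilon^{-\frac{n+|\alpha|}{2}}\int_{K_0} (1+|y-x(t,z)|^{|\alpha|})\,e^{-w_4|y-x(t,z)|^2/\varepsilon}\,dz,
\]
using (P5) to bound $|g|$, $|x^\alpha|\le|x|^{|\alpha|}$, $\varrho_\eta\le 1$, and (P4) for the exponential. This crude bound is \emph{not} $O(1)$ pointwise near a caustic, so the $L^2$ norm must be computed by integrating in $y$ first, or by the Cauchy–Schwarz/Schur-test approach: write the double integral for $\|\Int\|_{L^2}^2$ as $\varepsilon^{-(n+|\alpha|)}\int\!\!\int\!\!\int_{K_0\times K_0} F(t,y,z)\overline{F(t,y,z')}\,dz\,dz'\,dy$, bound $|F|$ by the Gaussians above, and perform the $y$-integral of $e^{-w_4(|y-x(t,z)|^2+|y-x(t,z')|^2)/\varepsilon}$ explicitly; it equals $C\varepsilon^{n/2}e^{-w_4|x(t,z)-x(t,z')|^2/2\varepsilon}$ up to polynomial factors (absorb these via the elementary inequality $|x|^p e^{-cx^2/\varepsilon}\le D e^{-cx^2/2\varepsilon}$ already used in the proof of \lem{cutoffdiff}). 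This leaves
\[
   \|\Int^\alpha_{\Phi,x,g}(t,\,\cdot\,)\|_{L^2}^2 \le C\varepsilon^{-\frac{n}{2}-|\alpha|}\int_{K_0}\!\!\int_{K_0} e^{-w_4|x(t,z)-x(t,z')|^2/4\varepsilon}\,dz\,dz'.
\]

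The remaining task is to show the double integral is $O(\varepsilon^{n/2+|\alpha|})$, and for this the factor $(y-x)^\alpha$ must be tracked more carefully rather than merely bounded by $|y-x|^{|\alpha|}$ — in fact, for the cleanest argument one keeps the oscillatory factor $e^{i\Phi/\varepsilon}$ and integrates by parts in $z$ to gain powers of $\varepsilon$, which is where the $\varepsilon^{-|\alpha|/2}$ is paid for. Alternatively, and more in the spirit of \cite{LiuRunborgTanushev:10}, one uses (P3): since $|x(t,z)-x(t,z')|^2 + |\nabla_y\Phi(t,0,z)-\nabla_y\Phi(t,0,z')|^2 \ge C|z-z'|^2$, one can split $K_0\times K_0$ according to whether $|x(t,z)-x(t,z')|$ or the phase-gradient difference dominates; on the first region the Gaussian in $x$ already gives decay, and on the second region one integrates by parts in $z$ using that $\nabla_z\Phi(t,y-x(t,z),z)$ is controlled below by the phase-gradient difference. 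Summing over the finitely many integrals produced by Lemma~\lem{PuInTermsOfQ} and taking the supremum over $t\in[0,T]$, which is legitimate by \eq{suplipfin} and the smoothness \eq{coeffsmooth}, yields the claim.

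The main obstacle is the caustic set, where $Dx(t,z)$ drops rank and the naive change of variables $x=x(t,z)$ fails — the Jacobian vanishes, so one cannot bound $\int_{K_0} e^{-w_4|y-x(t,z)|^2/\varepsilon}dz$ by $C\varepsilon^{n/2}$ uniformly in $y$. The resolution is that (P3) provides a \emph{uniform} lower bound involving the extra coordinate $\nabla_y\Phi(t,0,z)$, so that even though the spatial map folds, the full Lagrangian-type map does not, and the oscillation of $e^{i\Phi/\varepsilon}$ in the $\nabla_y\Phi$ directions compensates for the lost Jacobian via integration by parts. Making the integration-by-parts estimate uniform in $t$ and in $\varepsilon$, and checking that the number of boundary terms and the constants stay controlled, is the technical heart; but since the lemma is quoted as already proved in \cite{LiuRunborgTanushev:10}, I would cite that argument and only indicate the structure above.
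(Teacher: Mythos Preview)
The paper's own proof of this theorem is simply a one-sentence citation: the result follows immediately from Theorem~3.2 in \cite{LiuRunborgTanushev:10}. Your proposal ends up in exactly the same place --- you cite that theorem and indicate the structure of its proof --- so in that sense you match the paper. Your sketch of the underlying argument (Schur test on the $L^2$ norm, the Gaussian localization from (P4), and the crucial role of (P3) as a phase-space bi-Lipschitz condition that survives caustics and is exploited via integration by parts in the oscillatory factor) is accurate and is indeed how the proof in \cite{LiuRunborgTanushev:10} goes.

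One small correction to the sketch: your displayed intermediate inequality with prefactor $\varepsilon^{-n/2-|\alpha|}$ is obtained by absorbing the polynomial $|y-x|^{|\alpha|}$ into the Gaussian, and you then ask for the remaining $z,z'$ double integral to be $O(\varepsilon^{n/2+|\alpha|})$. That is impossible --- the double integral of a Gaussian in $|x(t,z)-x(t,z')|$ can at best be $O(\varepsilon^{n/2})$. The fix, which you allude to immediately afterwards, is not to absorb: keep the factor $|y-x(t,z)|^{|\alpha|}|y-x(t,z')|^{|\alpha|}$ through the $y$-integration, which then yields $C\varepsilon^{n/2+|\alpha|}e^{-c|x(t,z)-x(t,z')|^2/\varepsilon}$ and leaves the correct prefactor $\varepsilon^{-n/2}$ in front of the $z,z'$ integral. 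After that, the splitting according to which term in (P3) dominates, with non-stationary-phase integration by parts on the region where the $\nabla_y\Phi$ difference dominates, closes the estimate uniformly through caustics, exactly as you describe.
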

In Theorem 3.2 in \cite{LiuRunborgTanushev:10},
an integral operator of the same
form was estimated. That result immediately gives \eq{Il2}.

%%%%%%%%%%%%%%%%%%%%%%%%%%%%%%%%%%%%%%%%%%%%%%%%%%%%%%%%%%%%%%%%%%%%%%%%%%%

\section{Error Estimates in Sobolev Norms}\lbsec{Sobolev}

%%%%%%%%%%%%%%%%%%%%%%%%%%%%%%%%%%%%%%%%%%%%%%%%%%%%%%%%%%%%%%%%%%%%%%%%%%%

Here we show the following theorem.
\begin{theorem}\lbtheo{ErrorEstWaveSchrod}
Let $u_k$ be
the $k$-th order Gaussian beam
superposition given in \sect{GBdef} for the Schr\"odinger \eq{Schrodinger}
equation or the wave equation \eq{WaveEq}, with an
$\eta$ that is admissible for $K_0$, $T>0$ and the 
corresponding Gaussian beam phases,
$\Phi_k$ or $\Phi^\pm_k$.
If $u$ is the exact solution to Schr\"odinger's equation 
\eq{Schrodinger} and $s\geq 0$, there is a constant $C$ such that
\be{SchrodingerError}
 \sup_{0\leq t \leq T}||u(t,\,\cdot\,)-u_k(t,\,\cdot\,)||_{H^s(\Real^n)} \leq C\varepsilon^{\frac{k}{2}-s}\ ,\qquad \forall \varepsilon\in(0,1].
\ee
If $u$ is the exact solution to the wave equation 
\eq{WaveEq} and $s\geq 1$, there is a constant $C$ such that
\be{WaveError}
\sup_{0\leq t\leq T}\left(
    \|u_k(t,\,\cdot)-u(t,\,\cdot)\|_{H^{s}(\Real^n)} +
    \|\partial_t u_k(t,\,\cdot)-\partial_t u(t,\,\cdot)\|_{H^{s-1}(\Real^n)}\right)
    \leq 
C\varepsilon^{\frac{k}{2}-s}\ ,
\ee
for all $\varepsilon\in(0,1]$.
\end{theorem}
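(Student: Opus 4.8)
The plan is to run the classical ``stability $+$ consistency'' argument. Set $w:=u_k-u$. Since $P^\varepsilon[u]=0$ (resp.\ $P[u]=0$), the error $w$ solves the same PDE with right-hand side the Gaussian beam residual $P^\varepsilon[u_k]$ (resp.\ $P[u_k]$), and with initial data $u_k(0,\,\cdot\,)-u(0,\,\cdot\,)$ (and, for the wave equation, also $\partial_t u_k(0,\,\cdot\,)-\partial_t u(0,\,\cdot\,)$). Feeding this into the energy estimates \eq{SchWellPosed} and \eq{WaveWellPosed}, the initial-data terms are already $\Ordo(\varepsilon^{k/2-s})$ by \theo{InitialData}, so everything reduces to estimating the residual in $H^s_\varepsilon$ (Schr\"odinger) or $H^{s-1}$ (wave). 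By \lem{PuInTermsOfQ} the residual is, up to an $\Ordo(\varepsilon^\infty)$ term, $\varepsilon^{k/2\pm1}$ times a finite sum of the oscillatory integrals $\Int^{\alpha_j}_{\Phi_k,x,g_j}$ (or $\Int^{\alpha_j}_{\Phi_k^\pm,x^\pm,g_j^\pm}$) of \eq{IntDef}, with nonnegative powers $\varepsilon^{\ell_j}$ in front. So the only genuinely new point compared with the $s=0$ case of \cite{LiuRunborgTanushev:10} is a higher-order Sobolev bound for these integrals, which I would prove first.

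The key lemma is: if $(\Phi,x)$ satisfies (P1)--(P4) and $g$ satisfies (P5), then for every multi-index $\gamma$ there is $C_\gamma$ with $\sup_{t\in[0,T]}\|\partial_y^\gamma\Int^\alpha_{\Phi,x,g}(t,\,\cdot\,)\|_{L^2}\le C_\gamma\varepsilon^{-|\gamma|}$ for all $\varepsilon\in(0,1]$. I argue by induction on $|\gamma|$, the base case $\gamma=0$ being \theo{IestL2}. Differentiating the integrand in \eq{IntDef} with respect to $y_j$, the derivative either (i) hits $g$, reproducing an integral of the same form with $\partial_{y_j}g$, which still obeys (P5); or (ii) hits the polynomial $(y-x)^\alpha$, lowering $|\alpha|$ by one, which together with the normalising prefactor $\varepsilon^{-(n+|\alpha|)/2}$ costs a factor $\varepsilon^{-1/2}$; or (iii) hits $e^{i\Phi/\varepsilon}$, producing by the chain rule $\frac{i}{\varepsilon}(\partial_{y_j}\Phi_k)(t,y-x,z)\,e^{i\Phi_k/\varepsilon}$, where by \eq{phidef} $\partial_{y_j}\Phi_k(t,\,\cdot\,,z)$ is a polynomial of degree $\le k$ whose coefficients are smooth and bounded on $[0,T]\times K_0$ by \eq{coeffsmooth}--\eq{suplipfin}, so this term is $\frac{i}{\varepsilon}$ times a finite sum of monomials $(y-x)^\beta$ with (P5) amplitudes, hence a finite sum of integrals of the same type carrying an extra factor at most $\varepsilon^{-1}$; or (iv) hits the cutoff $\varrho_\eta(y-x)$, in which case the new factor is supported where $\eta\le|y-x|\le2\eta$, so by (P4) the integrand carries $e^{-\Im\Phi/\varepsilon}\le e^{-w_4\eta^2/\varepsilon}$ and the term is $\Ordo(\varepsilon^\infty)$ (this case is vacuous when $\eta=\infty$). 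Thus $\varepsilon\,\partial_{y_j}\Int^\alpha_{\Phi,x,g}$ equals a finite sum $\sum_i\varepsilon^{\mu_i}\Int^{\alpha_i}_{\Phi,x,g_i}$ with $\mu_i\ge0$, the $g_i$ obeying (P5), plus $\Ordo(\varepsilon^\infty)$; iterating $|\gamma|$ times and invoking \theo{IestL2} together with $\varepsilon\le1$ gives the claim. Summing over $|\gamma|\le s$ then yields $\sup_{t\in[0,T]}\|\Int^\alpha_{\Phi,x,g}(t,\,\cdot\,)\|_{H^s_\varepsilon}\le C\varepsilon^{-s}$, and likewise in the unscaled $H^s$ norm since $\|\cdot\|_{H^s}\le\|\cdot\|_{H^s_\varepsilon}$ for $\varepsilon\le1$.

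The rest is assembly. Inserting this bound into \lem{PuInTermsOfQ} (the $\Ordo(\varepsilon^\infty)$ pieces stay $\Ordo(\varepsilon^\infty)$ in the relevant Sobolev norm after $y$-differentiation, by \rem{einfterm}) gives $\sup_t\|P^\varepsilon[u_k](t,\,\cdot\,)\|_{H^s_\varepsilon}\le C\varepsilon^{k/2+1-s}$ and $\sup_t\|P[u_k](t,\,\cdot\,)\|_{H^{s-1}}\le C\varepsilon^{k/2-s}$. For \eq{Schrodinger}, applying \eq{SchWellPosed} to $w$ and using $P^\varepsilon[w]=P^\varepsilon[u_k]$ gives $\sup_t\|w\|_{H^s}\le C_s(T)\bigl(\|w(0,\,\cdot\,)\|_{H^s_\varepsilon}+\varepsilon^{-1}\sup_t\|P^\varepsilon[u_k]\|_{H^s_\varepsilon}\bigr)\le C\varepsilon^{k/2-s}$ by \theo{InitialData} and the residual bound, which is \eq{SchrodingerError}. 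For \eq{WaveEq}, applying \eq{WaveWellPosed} to $w$ (with $P[w]=P[u_k]$) bounds the left side of \eq{WaveError} by $C_s(T)$ times the sum of the two initial-data norms, each $\Ordo(\varepsilon^{k/2-s})$ by \eq{uinitest}--\eq{utinitest}, plus $\sup_t\|P[u_k]\|_{H^{s-1}}=\Ordo(\varepsilon^{k/2-s})$, giving \eq{WaveError}.

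The substance of the proof is entirely in the uniform derivative estimate for the oscillatory integrals; everything else reuses results already available. The two delicate points there are: (a) that each $y$-derivative of the phase costs no more than a full power of $\varepsilon^{-1}$ and that the amplitudes generated along the way still satisfy (P5) --- which hinges on $\Phi_k$ being a polynomial in its second argument with coefficients bounded uniformly on $[0,T]\times K_0$; and (b) that the terms in which a derivative lands on the cutoff $\varrho_\eta$ are exponentially small, which is exactly what property (P4) delivers on the annulus $\eta\le|y-x|\le2\eta$.
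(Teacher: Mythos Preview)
Your proposal is correct and follows essentially the same approach as the paper. The only organizational difference is that you prove the key Sobolev bound on the oscillatory integrals by induction on $|\gamma|$, differentiating one variable at a time, whereas the paper first records a closed formula for $\partial_y^\beta\bigl(y^\alpha e^{iQ(y,r)/\varepsilon}\bigr)$ (\lem{yePderiv}) and then applies the Leibniz rule once to $\partial_y^\beta\Int^\alpha_{\Phi,x,g}$; both routes produce the same decomposition into rescaled $\Int$-integrals plus an exponentially small cutoff term, and both finish by invoking \theo{IestL2}.
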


The results \eq{SchrodingerError} with $s=0$ and
\eq{WaveError} with $s=1$
were proved earlier in
\cite{LiuRunborgTanushev:10}. This theorem extends the results
to higher order Sobolev norms.
Note that $\varepsilon^{-s}$ is the rate at which the norm of the 
initial data for the PDEs
go to infinity as $\varepsilon\to 0$, because of their oscillatory nature.
The decreased rate for larger $s$ is therefore expected also for the
solution error. Still, for large enough $k$ the Gaussian beam
approximation will converge as $\varepsilon\to 0$ also in higher 
order Sobolev norms.

We now prove the results for the two types of PDEs separately. 
For the Schr\"odinger equation \eq{Schrodinger}, applying the
well-posedness estimate given in 
\eq{SchWellPosed}
to the difference
between the true solution $u$ and the $k$-th order Gaussian beam superposition,
$u_k$ we obtain
\begin{align*}
\lefteqn{\sup_{0\leq t\leq T} \|u_k(t,\,\cdot\,)-u(t,\,\cdot\,)\|_{H^s(\Real^n)}}
\hspace{15 mm} &\\ &\leq 
C_s(T)\left(
    ||u_k(0,\,\cdot\,)-u(0,\,\cdot\,)||_{H^{s}_\varepsilon(\Real^n)}+
   \frac{1}{\varepsilon} \sup_{0\leq t \leq T} ||P^\varepsilon[u_k](t,\,\cdot\,)||_{H^{s}_\varepsilon(\Real^n)}
    \right).
\end{align*}
The first term of the right hand side, which represents the difference in the initial data, can be estimated by \theo{InitialData} and the second term, which represents the evolution error, can be rewritten using \lem{PuInTermsOfQ} 
and then estimated to obtain
\begin{align}\lbeq{sest1}
\lefteqn{\sup_{0\leq t\leq T} \|u_k(t,\,\cdot\,)-u(t,\,\cdot\,)\|_{H^s}}
\hspace{15 mm} &\\ &\leq 
C_s(T)\left(
C\varepsilon^{\frac{k}{2}-s}+
\sup_{0\leq t \leq T} 
   \varepsilon^{\frac{k}{2}} \sum_{j=1}^{J}
  \left\|
   \Int^{\alpha_j}_{\Phi_k,x,g_j}(t,\,\cdot\,)\right\|_{H^{s}_\varepsilon(\Real^n)}
    \right)  +\Ordo(\varepsilon^\infty),\nonumber
\end{align}
since $\ell_j\geq 0$ in \lem{PuInTermsOfQ}.
Here we also used \rem{einfterm}, which implies that the 
Sobolev norm of $O(\varepsilon^\infty)$ is again $O(\varepsilon^\infty)$.

 To continue,
 we need to estimate $\Int^{\alpha_j}_{\Phi,x,g_j}$ in Sobolev norms.
 In \theo{IestL2}, such estimates were given in $L^2$-norm. In \sect{derivQL2norm}, we extend this result
to general
Sobolev spaces by proving the following theorem:
\begin{theorem}\lbtheo{derivQL2norm}
If the phase $\Phi$ and central ray $x$ have properties (P1)--(P4),
and $g$ has property (P5), then there is a constant $C$ 
such that, for all $\varepsilon\in(0,1]$,
$$
   \sup_{t\in[0,T]}\left\|\Int^\alpha_{\Phi,x,g}(t,\,\cdot\,)\right\|_{H^s(\Real^n)}
   \leq
   \sup_{t\in[0,T]}\left\|\Int^\alpha_{\Phi,x,g}(t,\,\cdot\,)\right\|_{H^s_\varepsilon(\Real^n)}
   \leq C \varepsilon^{-s}.
$$
\end{theorem}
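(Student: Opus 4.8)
The plan is to reduce the statement to an $L^2$ bound on the $y$-derivatives of $\Int^\alpha_{\Phi,x,g}$ and then reassemble the scaled Sobolev norm. Concretely, I would first prove that for every multi-index $\beta$ there is a constant $C_\beta$ with
\[
   \sup_{t\in[0,T]}\left\|\partial_y^\beta\Int^\alpha_{\Phi,x,g}(t,\,\cdot\,)\right\|_{L^2}\leq C_\beta\,\varepsilon^{-|\beta|},\qquad\forall\varepsilon\in(0,1].
\]
Given this, the theorem is immediate: by the definition of the scaled norm in \eq{Heps},
\[
   \sup_{t\in[0,T]}\left\|\Int^\alpha_{\Phi,x,g}(t,\,\cdot\,)\right\|_{H^s_\varepsilon}
   =\sum_{|\beta|\leq s}\varepsilon^{|\beta|-s}\sup_{t\in[0,T]}\left\|\partial_y^\beta\Int^\alpha_{\Phi,x,g}(t,\,\cdot\,)\right\|_{L^2}
   \leq\varepsilon^{-s}\sum_{|\beta|\leq s}C_\beta=:C\varepsilon^{-s},
\]
and $\|\cdot\|_{H^s}\leq\|\cdot\|_{H^s_\varepsilon}$ since $\varepsilon\leq1$. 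The case $\beta=0$ is precisely \theo{IestL2}.

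For the derivative bound I would argue by induction on $|\beta|$, the engine being a structural identity
\[
   \partial_y^\beta\Int^\alpha_{\Phi,x,g}=\varepsilon^{-|\beta|}\sum_{i=1}^{N_\beta}\varepsilon^{r_{\beta,i}}\Int^{\alpha_{\beta,i}}_{\Phi,x,g_{\beta,i}}+R_\beta,
\]
where $N_\beta<\infty$, every exponent $r_{\beta,i}\geq0$, every $g_{\beta,i}$ again has property (P5) with $\varepsilon$-independent constants, and the remainder obeys $\sup_{t\in[0,T]}\|R_\beta(t,\,\cdot\,)\|_{L^2}=\Ordo(\varepsilon^\infty)$; combined with \theo{IestL2} and $\varepsilon^{r_{\beta,i}}\leq1$ this yields the claimed $C_\beta\varepsilon^{-|\beta|}$. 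To pass from $\beta$ to $\beta+e_m$ I differentiate a single term $\Int^{\alpha'}_{\Phi,x,g'}$ under the integral in \eq{IntDef}. The derivative falling on $g'$ produces $\Int^{\alpha'}_{\Phi,x,\partial_{y_m}g'}$, and (P5) is preserved. The derivative falling on $(y-x)^{\alpha'}$ produces $\alpha'_m(y-x)^{\alpha'-e_m}$; absorbing the resulting mismatch in the $\varepsilon$-prefactor of \eq{IntDef} this is $\varepsilon^{-1/2}\Int^{\alpha'-e_m}_{\Phi,x,\alpha'_mg'}$. The derivative falling on $e^{i\Phi/\varepsilon}$ brings down $\tfrac{i}{\varepsilon}(\partial_{w_m}\Phi)(t,y-x,z)$; since by \eq{phidef} the Gaussian beam phase $\Phi=\Phi_k$ is a polynomial of degree $k+1$ in its spatial argument, $\partial_{w_m}\Phi_k(t,w,z)=\sum_{|\gamma|\leq k}c_{m,\gamma}(t,z)w^\gamma$ with coefficients $c_{m,\gamma}$ assembled from $p$, $M$ and the $\phi_\beta$, so this contribution equals $\tfrac{i}{\varepsilon}\sum_{|\gamma|\leq k}\varepsilon^{|\gamma|/2}\Int^{\alpha'+\gamma}_{\Phi,x,c_{m,\gamma}g'}$, and because the $c_{m,\gamma}$ do not depend on $y$ and, by \eq{coeffsmooth} and \eq{suplipfin}, are smooth and bounded on $[0,T]\times K_0$, the products $c_{m,\gamma}g'$ again satisfy (P5). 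Finally, the derivative falling on $\varrho_\eta(y-x)$ produces $(\partial_m\varrho_\eta)(y-x)$, which vanishes identically when $\eta=\infty$ and otherwise is supported in $\eta\leq|y-x|\leq2\eta$; there property (P4) gives $\Im\Phi\geq w_4\eta^2>0$, and since the term is also supported, as a function of $y$, in the bounded set $\{y:|y-x(t,z)|\leq2\eta\text{ for some }z\in K_0\}$, its $L^2$ norm is at most $C\varepsilon^{-(n+|\alpha'|)/2}e^{-w_4\eta^2/\varepsilon}=\Ordo(\varepsilon^\infty)$, so it goes into $R_{\beta+e_m}$. Thus $\partial_{y_m}$ multiplies the number of terms by a fixed finite factor, turns $\varepsilon^{-|\beta|}$ into $\varepsilon^{-|\beta|-1}$, keeps all exponents nonnegative, preserves (P5), and adds to $R$ only $\Ordo(\varepsilon^\infty)$ pieces (differentiating an already exponentially small term costs at most one power of $\varepsilon^{-1}$, still negligible against $e^{-w_4\eta^2/\varepsilon}$) — exactly the induction hypothesis at level $\beta+e_m$.

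I expect the bookkeeping in this last step to be the main obstacle, in two respects. First, I must check that every function generated in the recursion still satisfies (P5) uniformly in $\varepsilon$ and $t$; this rests on the polynomial-in-space form of $\Phi_k$ together with the smoothness of all Gaussian beam coefficient functions established in \sect{GBexist}, which makes them and all their derivatives bounded on the compact set $[0,T]\times K_0$, so that multiplication by them — or differentiation in $y$ — sends (P5) functions to (P5) functions with uniformly controlled constants. Second, I must dispose of the terms in which a $y$-derivative hits the cutoff $\varrho_\eta$: once this happens one is confined to the annulus where $\Im\Phi$ is bounded below by a positive constant, so the oscillatory exponential contributes a factor $e^{-w_4\eta^2/\varepsilon}$ that defeats every negative power of $\varepsilon$ accumulated afterwards, and all such terms are harmlessly absorbed into $R_\beta$. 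Once these two points are settled the induction closes and, as noted in the first paragraph, the theorem follows.
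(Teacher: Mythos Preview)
Your proposal is correct and follows essentially the same approach as the paper: distribute $\partial_y^\beta$ over the factors of the integrand, use that $\Phi$ is a polynomial in its spatial argument so phase derivatives generate only finitely many new $\Int$-integrals with (P5) coefficients, discard cutoff-derivative terms as $\Ordo(\varepsilon^\infty)$ via (P4), and invoke \theo{IestL2} on each surviving piece. The only organizational difference is that the paper packages the derivative of $y^\alpha e^{i\Phi/\varepsilon}$ into a single preparatory formula (Lemma~5.3) and then applies Leibniz to the three factors $g$, $W$, $\varrho_\eta$, whereas you run the induction directly on $|\beta|$ over all four factors; this is the same computation reorganized.
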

Upon applying \theo{derivQL2norm} to \eq{sest1} we
obtain \eq{SchrodingerError}.

For the wave equation \eq{WaveEq}
we use \eq{WaveWellPosed} and obtain
\begin{align}\lbeq{iest0}
\lefteqn{\sup_{0\leq t\leq T}\left(
    \|u_k(t,\,\cdot)-u(t,\,\cdot)\|_{H^{s}(\Real^n)} +
    \|\partial_t u_k(t,\,\cdot)-\partial_t u(t,\,\cdot)\|_{H^{s-1}(\Real^n)}\right)}
    \hspace{7 mm} &\ \nonumber\\
&\leq 
    C_s(T)\Bigl(
    \|u_k(0,\,\cdot\,)-u(0,\,\cdot\,)\|_{H^{s}(\Real^n)} +
    \|\partial_t u_k(0,\,\cdot\,)-\partial_t u(0,\,\cdot\,)\|_{H^{s-1}(\Real^n)}
    \nonumber\\
    &\ \ \ \
     + 
     \sup_{0\leq t \leq T}
     \|P[u_k](t,\,\cdot\,)\|_{H^{s-1}(\Real^n)}
    \Bigr).
\end{align}
From \theo{InitialData} 
we can again estimate the initial data terms,
\be{iest1}
    \|u_k(0,\,\cdot\,)-u(0,\,\cdot\,)\|_{H^{s}(\Real^n)} +
    \|\partial_t u_k(0,\,\cdot\,)-\partial_t u(0,\,\cdot\,)\|_{H^{s-1}(\Real^n)}
\leq C\varepsilon^{\frac{k}{2}-s}.
\ee
Moreover, by \lem{PuInTermsOfQ}, \rem{einfterm} and \theo{derivQL2norm}
\begin{align}\lbeq{iest2}
\lefteqn{\sup_{0\leq t \leq T}
     \|P[u_k](t,\,\cdot\,)\|_{H^{s-1}(\Real^n)}}\hskip 5 mm &\nonumber\\
     &\leq 
     \varepsilon^{\frac{k}{2}-1}\sum_{j=1}^{J} \varepsilon^{\ell_j}
          \Bigl(
          \sup_{0\leq t \leq T}
          \left\|{\Int}
^{\alpha_j}_{\Phi^+_k,x^+,g^+_j}(t,\,\cdot\,)\right\|_{H^{s-1}(\Real^n)}\nonumber\\
&\ \ \ \ \ \ \ \ +
\sup_{0\leq t \leq T}
\left\|{\Int}
^{\alpha_j}_{\Phi^-_k,x^-,g^-_j}(t,\,\cdot\,)\right\|_{H^{s-1}(\Real^n)}\Bigr)
       +\Ordo(\varepsilon^\infty)
       \nonumber\\
       &\leq 
     \varepsilon^{\frac{k}{2}-1}\sum_{j=1}^{J} C\varepsilon^{\ell_j-s+1}
     \leq C\varepsilon^{\frac{k}{2}-s}.
\end{align}
Together \eq{iest0}, \eq{iest1} and \eq{iest2} gives \eq{WaveError} and
the proof of \theo{ErrorEstWaveSchrod} is complete.
We now turn to proving \theo{derivQL2norm}. 

\subsection{Proof of \theo{derivQL2norm}}\lbsec{derivQL2norm}

The main idea of the proof is to reduce the derivative
of the oscillatory integral to a sum of the same
type of integrals, scaled by $\varepsilon$, and then apply
\theo{IestL2}.
We begin by proving a lemma giving the form of the derivatives of
a monomial multiplying the exponential of a polynomial.
\begin{lemma}\lblem{yePderiv}
Suppose $Q(y,r)$ is a polynomial in $y$ with coefficients
that depend smoothly on $r$. Then for multi-indices $\alpha$ and $\beta$,
\be{yePderiv}
   \partial_y^\beta \left(y^\alpha e^{iQ(y,r)/\varepsilon} \right)
   = \varepsilon^{|\alpha|-|\beta|}\sum_{|\gamma|=0}^{|\alpha|} \left(\frac{y}{\varepsilon}\right)^{\gamma}
   Q_{\gamma,\beta}(y,r) e^{iQ(y,r)/\varepsilon},
\ee
for some $Q_{\gamma,\beta}(y,r)$ which are also
polynomials in $y$ with coefficients
depending smoothly on $r$.
\end{lemma}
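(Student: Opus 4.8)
The plan is to prove \lem{yePderiv} by induction on $|\beta|$, establishing the identity one derivative at a time. The base case $|\beta|=0$ is immediate: the right-hand side reduces to $\varepsilon^{|\alpha|}(y/\varepsilon)^\alpha e^{iQ(y,r)/\varepsilon} = y^\alpha e^{iQ(y,r)/\varepsilon}$, so we may take $Q_{\alpha,0}(y,r)\equiv 1$ and $Q_{\gamma,0}\equiv 0$ for $|\gamma|<|\alpha|$. For the inductive step, I would assume \eq{yePderiv} holds for a given $\beta$ and apply one more partial derivative $\partial_{y_m}$ to both sides.

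The key computation is then to differentiate a generic summand $\varepsilon^{|\alpha|-|\beta|}(y/\varepsilon)^\gamma Q_{\gamma,\beta}(y,r)e^{iQ(y,r)/\varepsilon}$ with respect to $y_m$. By the product rule this produces three contributions: one from $\partial_{y_m}(y/\varepsilon)^\gamma$, which (when $\gamma_m\geq 1$) gives $\gamma_m\varepsilon^{-1}(y/\varepsilon)^{\gamma-e_m}Q_{\gamma,\beta}e^{iQ/\varepsilon}$ — note the power of $y/\varepsilon$ goes down by one but the explicit $\varepsilon^{-1}$ factor keeps the bookkeeping consistent with $|\beta|\to|\beta|+1$; one from $\partial_{y_m}Q_{\gamma,\beta}$, which simply yields $(y/\varepsilon)^\gamma(\partial_{y_m}Q_{\gamma,\beta})e^{iQ/\varepsilon}$, again a polynomial in $y$ with smoothly $r$-dependent coefficients, but here the power $|\gamma|$ is unchanged while the prefactor changes from $\varepsilon^{|\alpha|-|\beta|}$ to $\varepsilon^{|\alpha|-(|\beta|+1)}$, so we must absorb one factor of $\varepsilon$ into the coefficient, which is harmless since $\varepsilon Q_{\gamma,\beta}$ is still a legitimate coefficient-polynomial (alternatively one can note that differentiating the exponential supplies the matching $\varepsilon^{-1}$); and one from $\partial_{y_m}e^{iQ/\varepsilon} = i\varepsilon^{-1}(\partial_{y_m}Q)e^{iQ/\varepsilon}$, giving $\varepsilon^{|\alpha|-|\beta|-1}(y/\varepsilon)^\gamma\, iQ_{\gamma,\beta}(\partial_{y_m}Q)\,e^{iQ/\varepsilon}$, where $Q_{\gamma,\beta}\,\partial_{y_m}Q$ is a polynomial in $y$ with smooth $r$-dependence because $Q$ itself is. Collecting all three contributions over all summands and relabeling $\gamma$, one sees the result is again of the form \eq{yePderiv} with $\beta$ replaced by $\beta+e_m$, with new polynomials $Q_{\gamma,\beta+e_m}$ built by addition and multiplication from the $Q_{\gamma',\beta}$, their first-order $y$-derivatives, and $\partial_{y_m}Q$; the index range $|\gamma|\le|\alpha|$ is preserved because the term lowering $|\gamma|$ comes with the compensating $\varepsilon^{-1}$ and the other terms do not raise $|\gamma|$.

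The one point that needs a little care — and which I expect to be the only genuine obstacle — is the exponent bookkeeping: making sure the overall power of $\varepsilon$ in front is exactly $\varepsilon^{|\alpha|-|\beta|}$ and that the residual powers of $y$ are tracked through the variable $y/\varepsilon$ rather than $y$, so that differentiating the monomial part and differentiating the exponential part both fit into the same template. The clean way to handle this is to commit to always writing monomials as powers of $y/\varepsilon$ and to let any stray explicit power of $\varepsilon$ be absorbed into the coefficient polynomial $Q_{\gamma,\beta}$; since the statement only asserts existence of \emph{some} such polynomials, no sharp control of the coefficients is required, and the induction closes. A brief remark at the end can record that, by construction, each $Q_{\gamma,\beta}$ has degree in $y$ bounded in terms of $|\alpha|$, $|\beta|$ and $\deg Q$, which is what will actually be used when this lemma is fed into the proof of \theo{derivQL2norm}.
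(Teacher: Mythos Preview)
Your proof is essentially the paper's: induction on $|\beta|$ with the same base case $Q_{\alpha,0}\equiv 1$ (others zero), and the same three product-rule contributions in the inductive step. The one place you hedge---the middle term $\varepsilon^{|\alpha|-|\beta|}(y/\varepsilon)^\gamma\,\partial_{y_m}Q_{\gamma,\beta}\,e^{iQ/\varepsilon}$---is handled in the paper not by absorbing a factor of $\varepsilon$ into the coefficient (which would make $Q_{\gamma,\beta+e_m}$ depend on $\varepsilon$, contrary to the lemma's wording) but via the identity $\varepsilon\,(y/\varepsilon)^\gamma=(y/\varepsilon)^{\gamma-e_m}y_m$: the $(y/\varepsilon)$-index is lowered by one and the extra $y_m$ is pushed into the polynomial factor, giving the $\varepsilon$-free recursion $Q_{\gamma,\beta+e_m}=iQ_{\gamma,\beta}\partial_{y_m}Q+(\gamma_m{+}1)Q_{\gamma+e_m,\beta}+y_m\partial_{y_m}Q_{\gamma+e_m,\beta}$. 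With that small adjustment the two arguments coincide.
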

\begin{proof}
We use induction and first note that \eq{yePderiv} holds for $\beta=0$
with $Q_{\alpha,0}\equiv 1$ and $Q_{\gamma,0}\equiv 0$ for $\gamma\neq\alpha$.
Let $e_j$ be the unit vector multi-index and suppose ${\gamma}=({\gamma}_1,\ldots,{\gamma}_n)$.
Then, assuming \eq{yePderiv} holds for $\beta$,
\begin{align*}
\partial_y^{\beta+e_j} y^\alpha e^{iQ(y,r)/\varepsilon} 
   &=  \varepsilon^{|\alpha|-|\beta|} \partial_{y_j}
   \sum_{|\gamma|=0}^{|\alpha|} \left(\frac{y}{\varepsilon}\right)^{\gamma}
      Q_{\gamma,\beta}(y,r) e^{iQ(y,r)/\varepsilon}
\\
   &=
   \varepsilon^{|\alpha|-|\beta|-1}
   \sum_{|\gamma|=0}^{|\alpha|} 
   \left(\frac{y}{\varepsilon}\right)^{\gamma-e_j}
   [\gamma_jQ_{\gamma,\beta}(y,r) +
      y_j\partial_{y_j}Q_{\gamma,\beta}(y,r)]    e^{iQ(y,r)/\varepsilon}   
\\      
  &\mbox{\hskip 5 mm} +  i   \varepsilon^{|\alpha|-|\beta|-1}
  \sum_{|\gamma|=0}^{|\alpha|} \left(
  \frac{y}{\varepsilon}\right)^{\gamma}Q_{\gamma,\beta}(y,r)[\partial_{y_j}Q(y,r)]
e^{iQ(y,r)/\varepsilon}.
\end{align*}
This is of the same form as \eq{yePderiv}
if we identify $Q_{\gamma,\beta+e_j}=
iQ_{\gamma,\beta}\partial_{y_j}Q + (\gamma_j+1)Q_{\gamma+e_j,\beta}
+y_j\partial_{y_j+e_j}Q_{\gamma+e_j,\beta}$
for $|\gamma|<|\alpha|$ and
$Q_{\gamma,\beta+e_j}=
iQ_{\gamma,\beta}\partial_{y_j}Q$ when $|\gamma|=|\alpha|$.
Moreover $Q_{\gamma,\beta+e_j}(y,r)$ depends smoothly
on $r$ since $Q_{\gamma,\beta}$ and $Q$ do.
The lemma is therefore proved by induction.
\end{proof}

We now continue with the proof of \theo{derivQL2norm}.
Let 
$$
   W(t,y,z) = y^\alpha e^{i\Phi(t,y,z)/\varepsilon}.
$$
Then, since
$\Phi(t,y,z)$ is a 
$k+1$ degree polynomial in $y$ with coefficients depending smoothly
on $t$ and $z$ we can use \lem{yePderiv} to obtain
\begin{align*}
  \partial_y^\beta\Int^\alpha_{\Phi,x,g}(t,y) 
  &=
  \varepsilon^{-\frac{n+|\alpha|}{2}}\int_{K_0}
  \partial_y^\beta \Bigl(g(t,y,z)W(t,y-x(t,z),z)\varrho_\eta(y-x(t,z))\Bigr)dz \\
&=  \varepsilon^{-\frac{n+|\alpha|}{2}}
  \sum_{\beta_1+\beta_2+\beta_3=\beta}C_{\beta_1,\beta_2,\beta_3}
  \int_{K_0}
  [\partial_y^{\beta_1} g]
  [\partial_y^{\beta_2} W][\partial_y^{\beta_3} \varrho_\eta]dz\\
&=  
  \sum_{\beta_1+\beta_2+\beta_3=\beta}
  \sum_{|\gamma|=0}^{|\alpha|}C_{\beta_1,\beta_2,\beta_3}
  \varepsilon^{-\frac{n+|\alpha|}{2}}
  \varepsilon^{|\alpha|-|\beta_2|-|\gamma|} 
I_{\beta_1,\beta_2,\beta_3,\gamma}(t,z),
\end{align*}
where
\begin{align*}
 I_{\beta_1,\beta_2,\beta_3,\gamma}(t,y) 
&=  \int_{K_0}
    [\partial_y^{\beta_1} g(t,y,z)]
   (y-x(t,z))^\gamma
   Q_{\gamma,\beta_2}(t,y-x(t,z),z)
   \\
   &\mbox{\hskip 5 mm}\times
  e^{i\Phi(t,(y-x(t,z),z)/\varepsilon}  
  [\partial_y^{\beta_3} \varrho_\eta(y-x(t,z))]dz,
\end{align*}
with $Q_{\gamma,\beta_2}(t,y,z)$ being polynomials
in $y$ depending smoothly on $t$ and $z$.
We now first consider the terms $I_{\beta_1,\beta_2,\beta_3,\gamma}$
where $|\beta_3|>0$.
Since the derivatives of $\varrho_\eta(y-x(t,z))\equiv 0$ except when
$\eta\leq |y-x(t,z)|\leq 2\eta$, and by properties (P4), (P5),
$$
|I_{\beta_1,\beta_2,\beta_3,\gamma}|
\leq 
C(T)\int_{K_0}
  e^{-w_4 \eta ^2/\varepsilon}  
  dz\leq C(T) e^{-w_4\eta ^2/\varepsilon},
$$
for all $0\leq t\leq T$. The remaining terms $I_{\beta_1,\beta_2,0,\gamma}$ are all of the form
$$
  \int_{K_0} \tilde{g}(t,y,z)
  (y-x(t,z))^\gamma
  \tilde{Q}(t,y-x(t,z),z)
   e^{i\Phi(t,y-x(t,z),z)/\varepsilon} \varrho_\eta(y-x(t,z))dz, 
$$
for some smooth function $\tilde{g}$, which is a $y$-derivative of $g$, and
$\tilde{Q}(t,y,z)$ which is a polynomial in $y$ with 
coefficients that are smooth in $t$ and $z$. Suppose the
degree of $\tilde{Q}$ is $d$ and denote the coefficients by $q_\ell(t,z)$.
Then the term can be written as
\begin{align*}
I(t,y) &=\sum_{|\ell|=0}^d
 \int_{K_0} \tilde{g}(t,y,z)
  q_\ell(t,z)
(y-x(t,z))^{\gamma+\ell}
   e^{i\Phi(t,y-x(t,z),z)/\varepsilon} \varrho_\eta(y-x(t,z))dz 
\\
 &=\sum_{|\ell|=0}^d
   \varepsilon^{\frac{n+|\gamma|+|\ell|}{2}}
  \Int^{\gamma+\ell}_{\Phi,x,\tilde{g}q_\ell}(t,y).
\end{align*}
Clearly (P5) holds also for $\tilde{g}q_\ell$ and then, if $0<\varepsilon\leq 1$,
 we get from \theo{IestL2},
$$
  \sup_{t\in[0,T]}||I(t,\,\cdot\,)||_{L_2(\Real^n)}
  \leq 
  \sum_{|\ell|=0}^d
   \varepsilon^{\frac{n+|\gamma|+|\ell|}{2}}
 \sup_{t\in[0,T]}||
 \Int^{\gamma+\ell}_{\Phi,x,\tilde{g}q_\ell}(t,\,\cdot\,)||_{L_2(\Real^n)}
 \leq C(T)\varepsilon^{\frac{n+|\gamma|}{2}}.
$$
Therefore
\begin{align*}
 \lefteqn{ \sup_{t\in[0,T]}||
 \partial_y^\beta\Int^\alpha_{\Phi,x,g}(t,\,\cdot\,)||_{L_2(\Real^n)}}{\hskip 10 mm} &\\
&\leq C(T)
\left(
  \sum_{\beta_1+\beta_2=\beta}
  \sum_{|\gamma|=0}^{|\alpha|}
  \varepsilon^{-\frac{n+|\alpha|}{2}}
  \varepsilon^{|\alpha|-|\beta_2|-|\gamma|} 
\varepsilon^{\frac{n+|\gamma|}{2}}
+ e^{-w_4 \eta ^2/\varepsilon}\right)
\leq C(T)
  \varepsilon^{-|\beta|},
\end{align*}
for all $\varepsilon\in(0,1]$.
From this last estimate it immediately follows that also
$$
\sup_{t\in[0,T]}||\Int^\alpha_{\Phi,x,g}(t,\,\cdot\,)||_{H_\varepsilon^s(\Real^n)}
=\sup_{t\in[0,T]}
\sum_{|\beta|\leq s}
\varepsilon^{|\beta|-s}||\partial_y^\beta\Int^\alpha_{\Phi,x,g}(t,\,\cdot\,)||_{L^2(\Real^n)}
\leq C(T)\varepsilon^{-s}.
$$
Since when $0<\varepsilon\leq 1$, we clearly have
$||\,\cdot\,||_{H^s(\Real^n)}\leq
||\,\cdot\,||_{H_\varepsilon^s(\Real^n)}$
the theorem is proved.

%%%%%%%%%%%%%%%%%%%%%%%%%%%%%%%%%%%%%%%%%%%%%%%%%%%%%%%%%%%%%%%%%%%%%%%%%%%%%

\section{Error Estimates in Max Norm}\lbsec{Maxnorm}

%%%%%%%%%%%%%%%%%%%%%%%%%%%%%%%%%%%%%%%%%%%%%%%%%%%%%%%%%%%%%%%%%%%%%%%%%%%

We will here consider max norm estimates for Gaussian beams applied to
\eq{Schrodinger} and \eq{WaveEq}.
The main result is \theo{MaxNormError} in 
\sect{mainresult}. 
Also in the case of max norm estimates the 
oscillatory integrals in \eq{IntDef} play a crucial role.
However, here slightly different assumptions are made for the functions
in the integrals, and they are estimated pointwise. 
In \sect{prelim},
 we define notation and the sets used in \theo{MaxNormError}. 
The statement of the theorem and the general steps of the proof 
 are then given in \sect{mainresult}. 
 Finally, the details of these steps, in the form of two
 secondary theorems,
 are proved
 in \sect{DiffRepSec} and \sect{IestimateSec}.
 
%The error estimate degenerates for points at caustics but
%away from caustics we again have the $\varepsilon^{k/2}$ dependence for the error.
%For odd order beams 
%we can improve the estimates by exploiting error cancellations between 
%adjacent beams, similarly to what was demonstrated in \cite{MotamedRunborg:09}
%for the Helmholtz equation
%and in \cite{Zhen:14} for the Schr\"odinger equation.

%%%%%%%%%%%%%%%%%%%%%%%%%%%%%%%%%%%%%%%%%%%%%%%%%%%%%%%%%%%%%%%%%%%%%
\subsection{Preliminaries}\lbsec{prelim}

For the proof of the max norm estimates the assumptions (A1)--(A4)
must hold for a slightly larger set than $K_0$,
where
the initial amplitude is supported. We therefore
define the family of compact sets that ``fatten'' the set $K_0$,
$$
  K_{d} = \{ z\in \Real^n\ :\ {\rm dist}(z,K_0)\leq d\}\supset K_0.
$$
We also introduce the corresponding space-time set,
$$
{\mathcal K}_d = [0,T]\times K_d.
$$
Clearly (A1), (A2) and (A4) hold with $K_0$ replaced by $K_d$,
for any $d>0$. Since the initial phase $\varphi_0$ is smooth, 
we can also always find some, small enough,
$d$ such that (A3) holds. We will henceforth
consider a fixed such $d$. Then, all results in 
previous sections will be true, if $K_d$ is used instead of $K_0$.
Note that the cutoff width $\eta$ must now be
admissible for $K_d$ rather than $K_0$. The oscillatory
integrals can still be taken over $K_0$ though, since it contains
the support of the amplitude functions.

For the remaining definitions we recall that by \sect{GBexist} the
ray function
$x(t,z)$ is smooth under our assumptions.
We define the
 Jacobian $J$ by
$$
J(t,z) := D_z x(t,z).
$$
Furthermore, we introduce 
the set of caustic points on $[0,T]\times \Real^n$ for a central ray function $x(t,z)$,
$$
\mathcal{C}_x = \left\{(t,y)\in[0,T]\times \Real^n\ :\ 
\exists (t,z)\in {\mathcal K}_d\ \text{\rm such that\ } y=x(t,z),\
\det J(t,z) = 0\right\},
$$
and the fattened caustic set,
$$
\mathcal{C}_{x,\delta} = \left\{(t,y)\in [0,T]\times \Real^n\ :\ 
\text{\rm dist}((t,y),\mathcal{C}_x)< \delta\right\}.
$$
We also let ${\mathcal D}_{x,\delta}$ be the fattened domain of 
$x(t,z)$,
$$
  {\mathcal D}_{x,\delta} = \{ (t,y)\in [0,T]\times\Real^n\ :\ {\rm dist}(y,x(t,K_0))\leq \delta\}.
$$
Note that when $\varepsilon\to 0$ the solution will concentrate on the
set ${\mathcal D}_{x,0}$. Hence, ${\mathcal D}_{x,\delta}$ can be thought
of as approximating the essential support of the solution.
In Figure~\ref{OR:figure:Sets}, the sets are visualized for an example in two dimensions.

\begin{figure}[!t]
  \captionsetup[subfigure]{labelformat=empty}
\centerline{
\subfloat[$\mathcal{K}_0$]{\includegraphics[width=.46\textwidth]{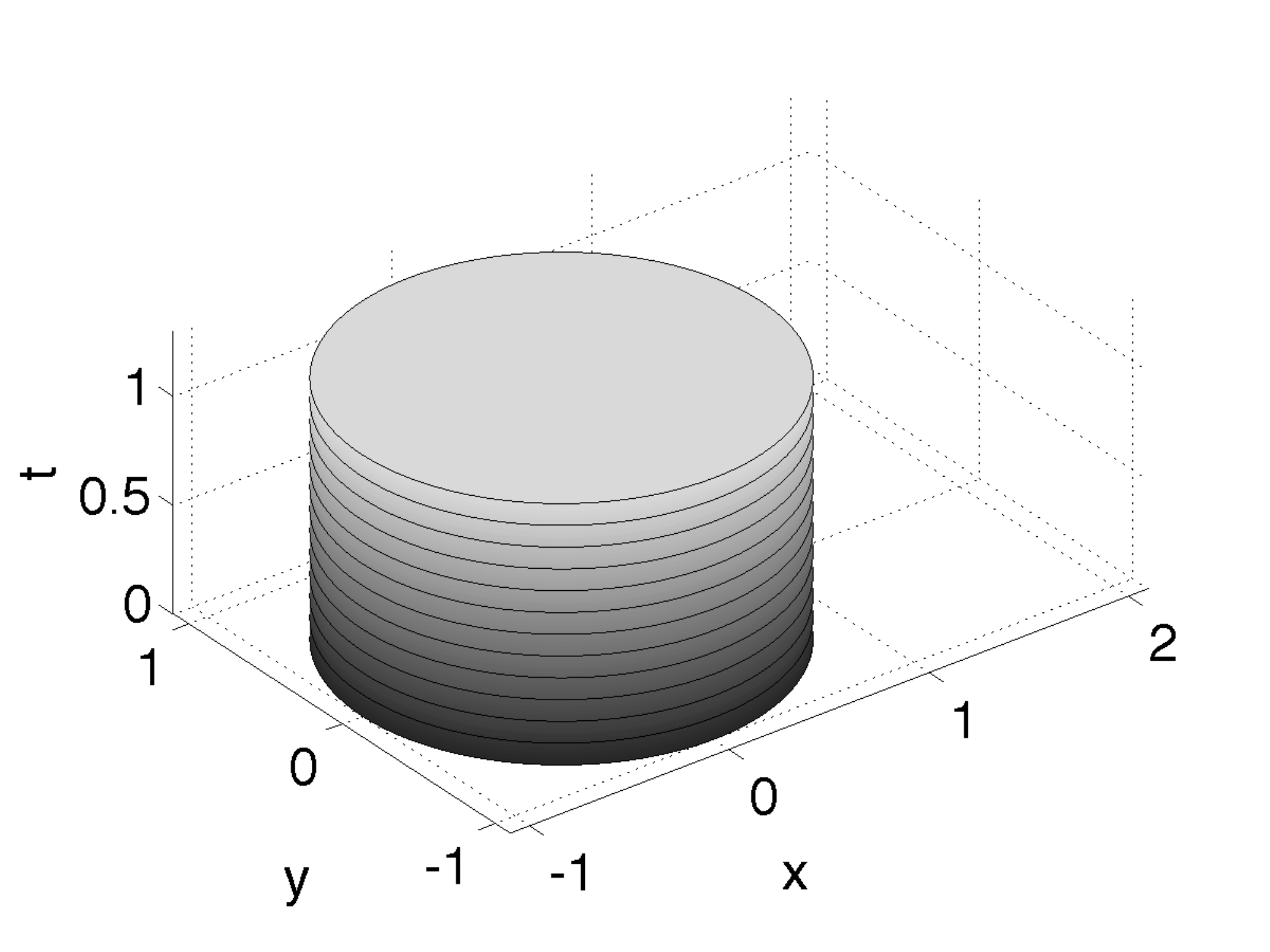}}
\subfloat[$\mathcal{D}_{x,0}$]{\includegraphics[width=.46\textwidth]{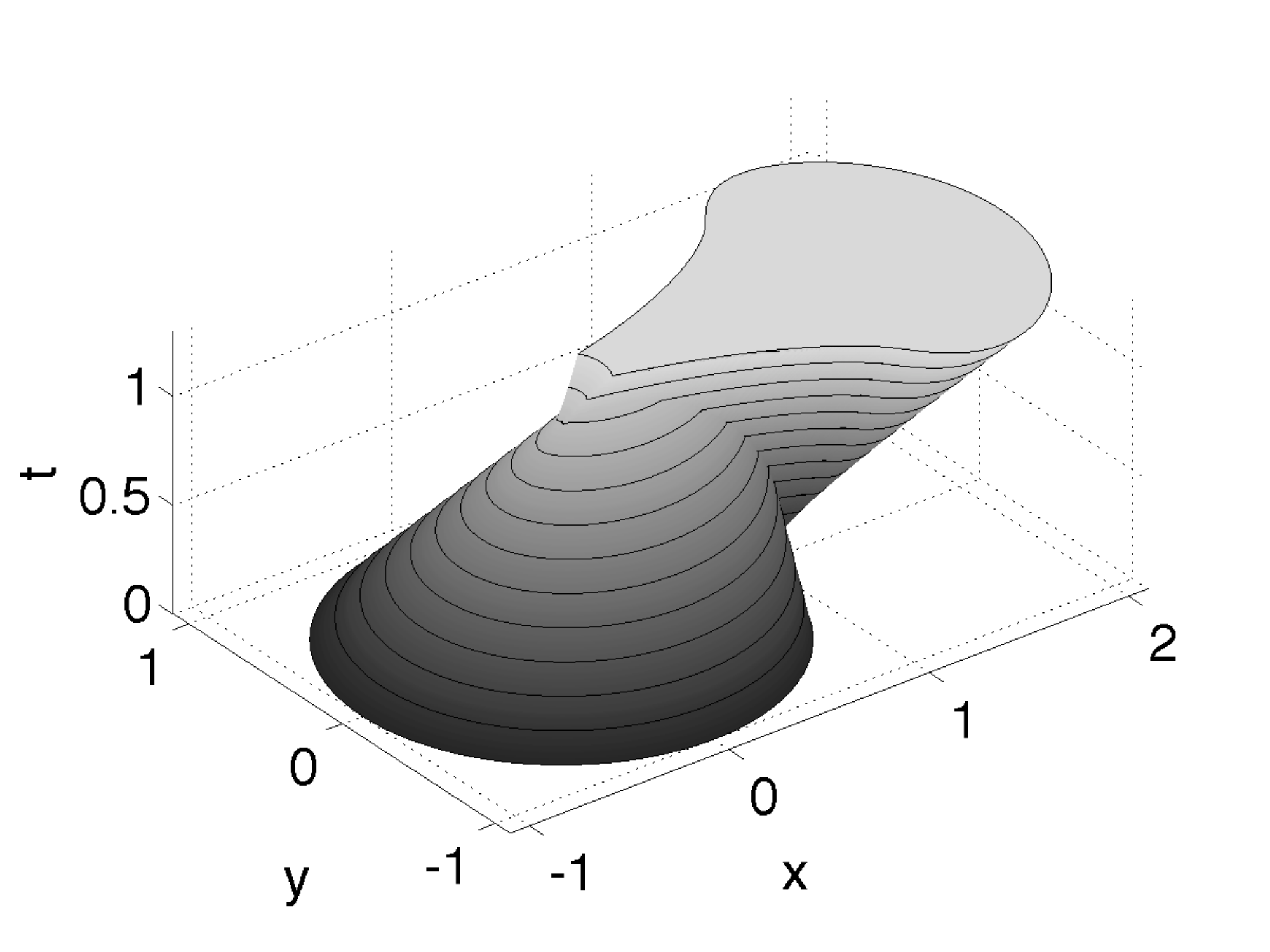}}
}
%\vskip -3 mm
\centerline{
\subfloat[$X^{-1}(\mathcal{C}_x)$ and $K_0$]{\includegraphics[width=.46\textwidth]{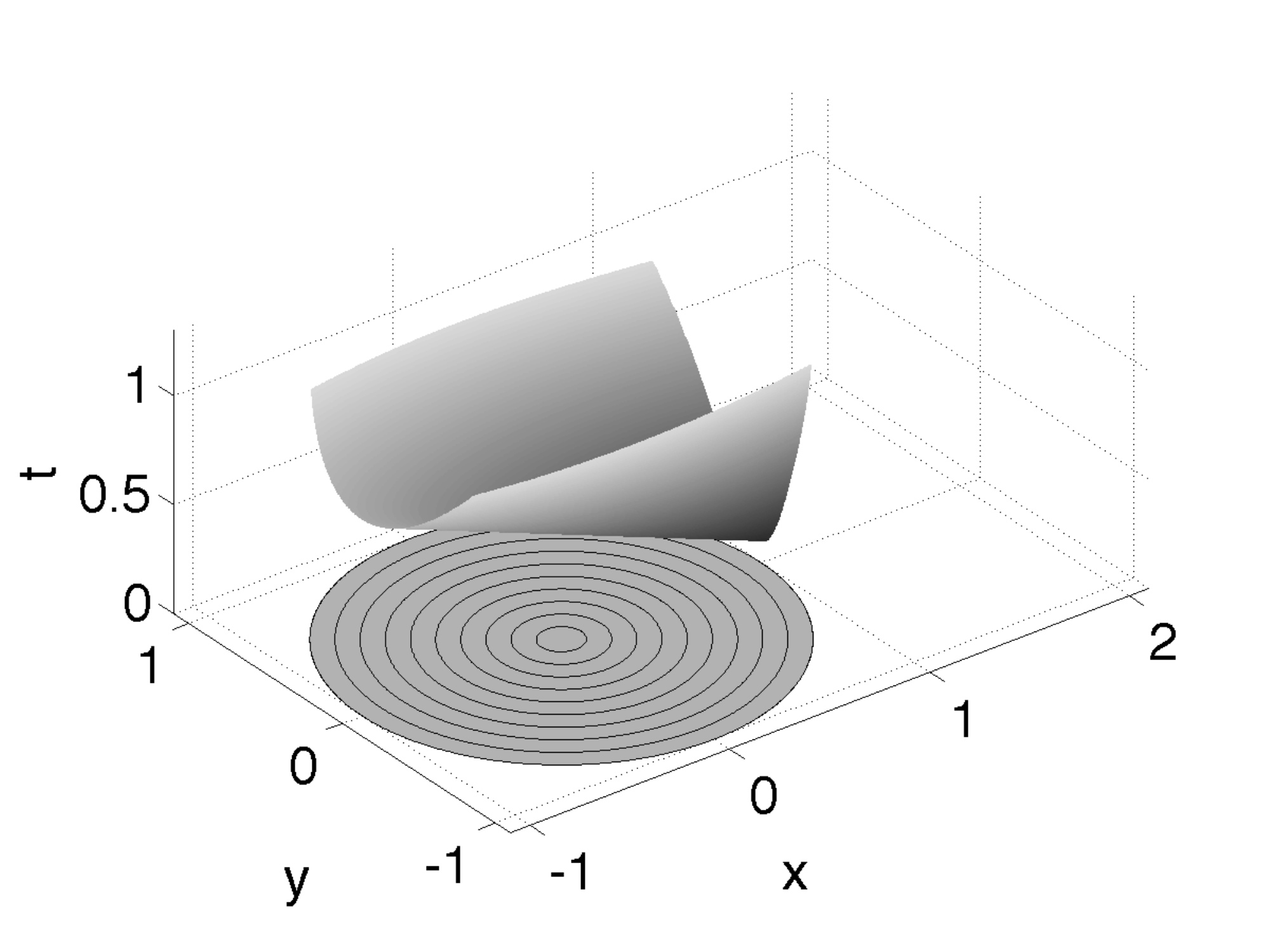}}
\subfloat[$\mathcal{C}_x$ and $K_0$]{\includegraphics[width=.46\textwidth]{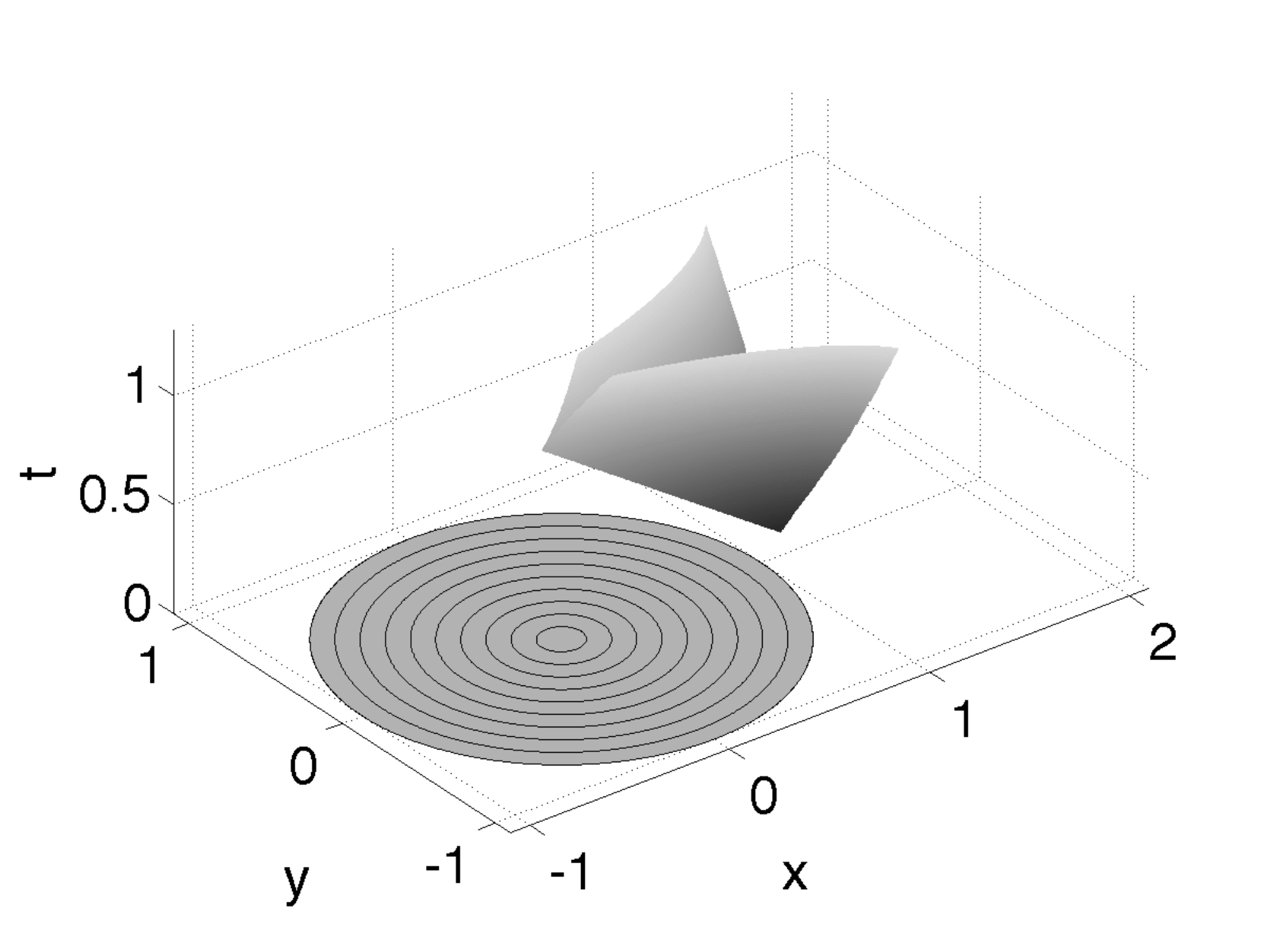}}
}
%\vskip -3 mm
\centerline{
\subfloat[$\mathcal{K}_0$ and $X^{-1}(\mathcal{C}_x)$ at $t=0.8$]{\includegraphics[width=.46\textwidth]{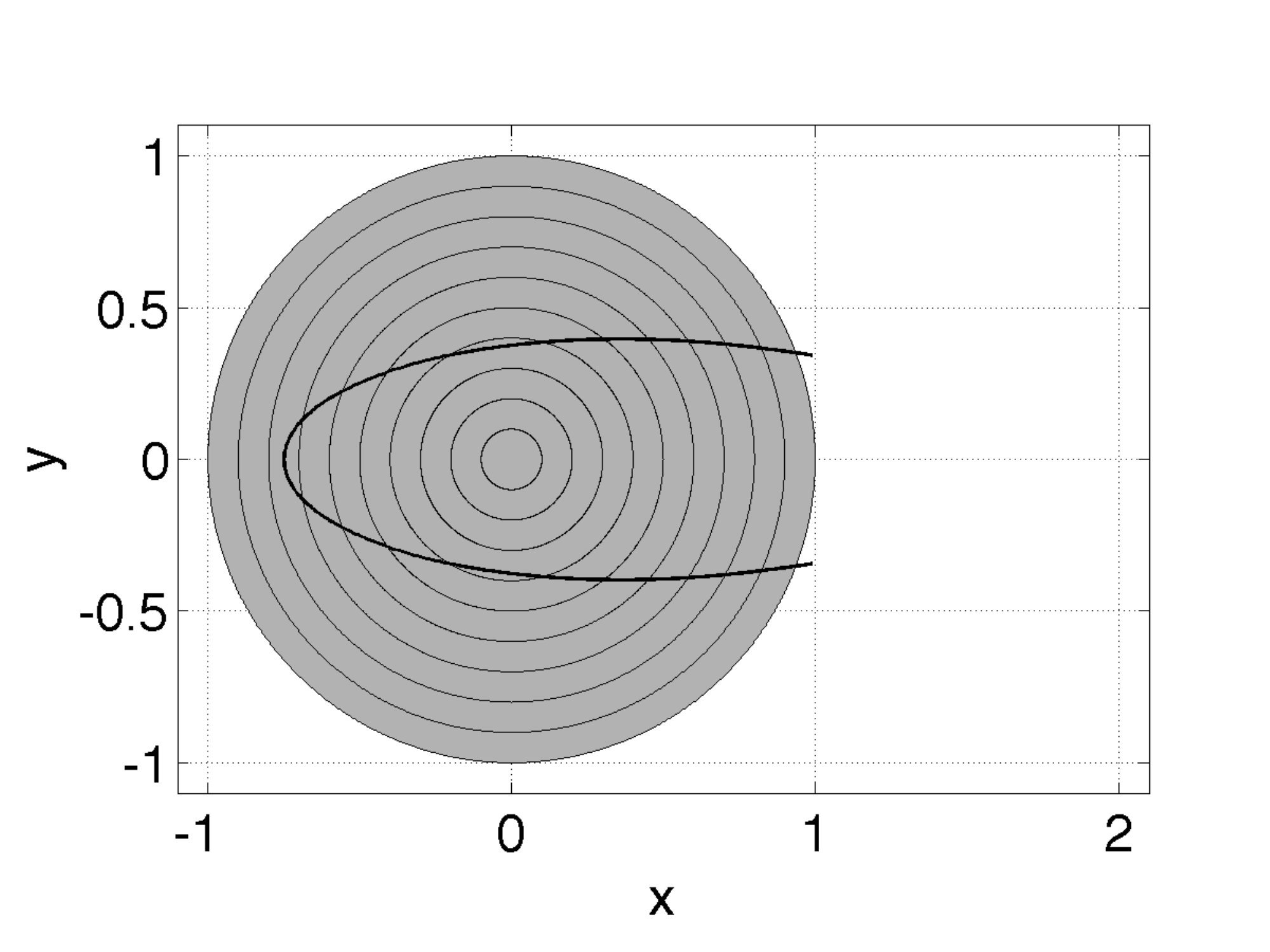}}
\subfloat[$\mathcal{D}_{x,0}$
and $\mathcal{C}_x$ at $t=0.8$]{\includegraphics[width=.46\textwidth]{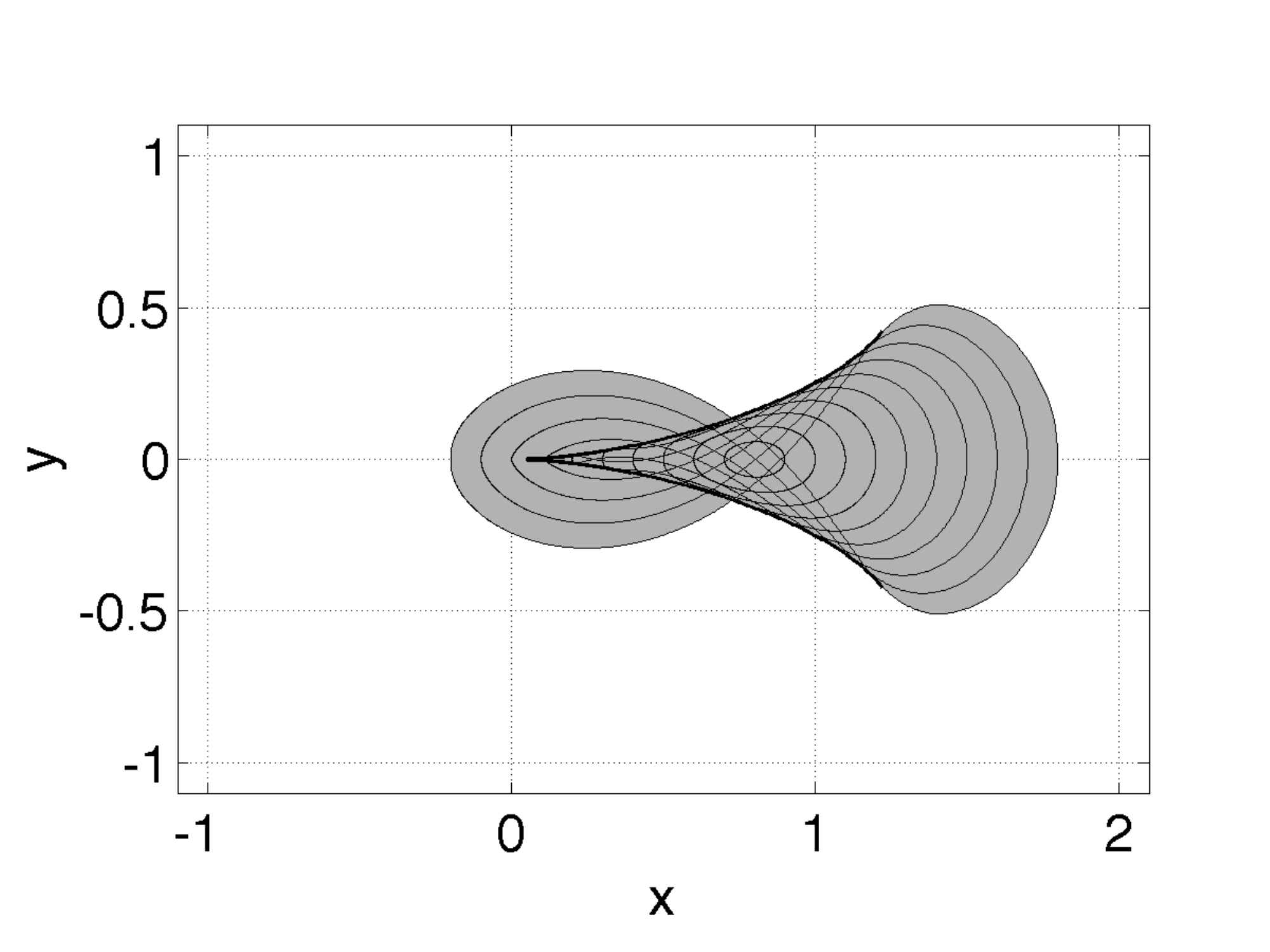}}
}
\caption{Examples of the 
the various sets used in this section for a two-dimensional case,
where $\varphi_0(x,y)=-x+y^2+0.4\,x^2$, $T=1.2$ and $K_0$ is the 
unit circle. In the last row the intersection of the sets with
the plane $t=0.8$ is shown; the solid black line indicates 
$X^{-1}({\mathcal C}_x)$ and ${\mathcal C}_x$ respectively.
}
\label{OR:figure:Sets}
\end{figure}
%\begin{figure}[!h]
%  \captionsetup[subfigure]{labelformat=empty}
%\centerline{
%\subfloat[$\mathcal{K}_0$]{\includegraphics[width=.48\textwidth]{exemp1.png}}
%\subfloat[$\mathcal{D}_{x,0}$]{\includegraphics[width=.48\textwidth]{exemp2.png}}
%}
%\vskip -3 mm
%\centerline{
%\subfloat[$X^{-1}(\mathcal{C}_x)$]{\includegraphics[width=.48\textwidth]{exemp3.png}}
%\subfloat[$\mathcal{C}_x$]{\includegraphics[width=.48\textwidth]{exemp4.png}}
%}
%\vskip -3 mm
%\centerline{
%\subfloat[$\mathcal{K}_0$ and $X^{-1}(\mathcal{C}_x)$ at $t=0.8$]{\includegraphics[width=.48\textwidth]{exemp5.png}}
%\subfloat[$\mathcal{D}_{x,0}$
%and $\mathcal{C}_x$ at $t=0.8$]{\includegraphics[width=.48\textwidth]{exemp6.png}}
%}
%\lbfig{Sets}
%\caption{Examples of the 
%the various sets used in this section for a two-dimensional case,
%where $\varphi_0(x,y)=-x+y^2+0.4\,x^2$, $T=1.2$ and $K_0$ is the 
%unit circle. In the last row the intersection of the sets with
%the plane $t=0.8$ is shown; the solid black line indicates 
%$X^{-1}({\mathcal C}_x)$ and ${\mathcal C}_x$ respectively.
%}
%\end{figure}

%\newpage
%\mbox{}   \newpage\mbox{}
The total caustic set $\mathcal{C}_\delta$ 
and domain $\mathcal{D}_\delta$ 
are finally defined as the union of the corresponding
sets of each mode, 
$$
  \mathcal{C}_\delta = \begin{cases}
  \mathcal{C}_{x,\delta}, & \text{Schr\"odinger},\\
  \mathcal{C}_{x^+,\delta}\cup\mathcal{C}_{x^-,\delta}, & \text{wave equation},
  \end{cases}
  \qquad
  \mathcal{D}_\delta = \begin{cases}
  \mathcal{D}_{x,\delta}, & \text{Schr\"odinger},\\
  \mathcal{D}_{x^+,\delta}\cup\mathcal{D}_{x^-,\delta}, & \text{wave equation}.
  \end{cases}
$$
Note that for the wave equation an equivalent definition of $\mathcal{C}_\delta$
is the $\delta$-fattened version of 
$\mathcal{C}_{x^+}\cup\mathcal{C}_{x^-}$.
Moreover, we always consider $[0,T]\times \Real^n$ to be the
universal set and complements of sets are taken with respective to this, i.e.
for ${\mathcal U}\subset [0,T]\times \Real^n$,
$$
  {\mathcal U}^c = [0,T]\times \Real^n \setminus {\mathcal U}.
$$

Finally, in the proofs we will typically not use property (P4) the way it is written 
in \lem{GBphaseOKforQ},
but rather the following simple consequence, which we denote (P$4'$),
\begin{itemize}
\item[\bf (P${\mathbf 4'}${\bf )}] {\it there exists a constant $w_4>0$ such that}
$$
\left|e^{i\Phi(t,y,z)/\varepsilon}\varrho_\eta(y)\right|\leq e^{-w_4|y|^2/\varepsilon},
$$
{\it for all $(t,z)\in {\mathcal K}_d$ and $y\in\Real^n$.}
\end{itemize}

\begin{remark}
Note that
the caustic set is fattened both in space and time. 
This is necessary for the estimates derived below to be true;
the rate $\varepsilon^{\lceil k/2\rceil}$ is only obtained
uniformly away from the caustics, in space and time.
\end{remark}

%%%%%%%%%%%%%%%%%%%%%%%%
\subsection{Main Result}\lbsec{mainresult}

We are now ready to state the main theorem of this section.
It gives max norm error estimates in terms of $\varepsilon$,
over different parts of the
solution domain. 
The theorem shows that uniformly away from caustics, $(t,y)\in{\mathcal C}^c_\delta$,
the convergence rate is the same $O(\varepsilon^{k/2})$ 
as in \cite{LiuRunborgTanushev:10} when $k$ is even. 
For odd $k$, however,
error cancellations between adjacent beams can be exploited, and
the better rate $O(\varepsilon^{(k+1)/2})$ 
is obtained,
similar to
the results in \cite{Zhen:14,MotamedRunborg:09}. We believe this rate
is sharp.
Close to a caustic point,
$(t,y)\in {\mathcal C}_{\delta}$, the theorem gives the
rather coarse rate estimate $O(\varepsilon^{(k-n)/2})$, which
can likely be improved for many types of caustics.
Finally, away from the essential support of the
solution,  $(t,y)\in {\mathcal D}_{\delta}^c$, 
the convergence is exponential in $\varepsilon$.
In fact, the solution itself is also exponentially
small in $\varepsilon$ on this domain.

%\begin{theorem}\lbtheo{MaxNormError}
%Let $u_k$ be
%the $k$-th order Gaussian beam
%superposition given in \sect{GBdef} for the Schr\"odinger
%equation \eq{Schrodinger} or the wave equation \eq{WaveEq}, with an
%$\eta$ that is admissible for $K_d$, $T>0$ and the 
%correspondning Gaussian beam phases,
%$\Phi_k$ or $\Phi^\pm_k$ as well as for
%$\Phi_{k+p}$ or $\Phi^\pm_{k+p}$,
%where $p = 2\lfloor n/2\rfloor+3+m$ for some $m\in\Znumbers^+$.
%If $u$ is the exact solution to Schr\"odinger's equation
%or the wave equation, then we have the following
%estimate. For each $\delta>0$ and $m\geq 0$, there is a constant $C_{\delta,m}$
%such that
%\be{maininfres}
% |u_k(t,y)-u(t,y)| \leq C_{\delta,m}
%     \begin{cases}
%     \varepsilon^{\lceil k/2\rceil}, & (t,y)\in  {\mathcal C}^c_\delta, \\
%   \varepsilon^{(k-n)/2}, & (t,y)\in {\mathcal C}_\delta,\\
%     \varepsilon^{(k+1+m)/2}, & (t,y)\in  {\mathcal D}^c_\delta,
%   \end{cases}   
%\ee
%for all $\varepsilon\in (0,1]$.
%\end{theorem}
\begin{theorem}\lbtheo{MaxNormError}
Let $u_k$ be
the $k$-th order Gaussian beam
superposition given in \sect{GBdef} for the Schr\"odinger
equation \eq{Schrodinger} or the wave equation \eq{WaveEq}, with a cutoff width
$\eta$ that is admissible for $K_d$, $T>0$ and the 
correspondning Gaussian beam phases,
$\Phi_k$ or $\Phi^\pm_k$.
If $u$ is the exact solution to Schr\"odinger's equation
or the wave equation, then we have the following
estimate. For each $\delta>0$ and $m>0$, there is a constant $C_{\delta,m}$
such that
\be{maininfres}
 |u_k(t,y)-u(t,y)| \leq C_{\delta,m}
     \begin{cases}
     \varepsilon^{\lceil k/2\rceil}, & (t,y)\in  {\mathcal C}^c_\delta, \\
   \varepsilon^{(k-n)/2}, & (t,y)\in {\mathcal C}_\delta,\\
%     \varepsilon^{(k+1+m)/2}, & (t,y)\in  {\mathcal D}^c_\delta,
     \varepsilon^{m}, & (t,y)\in  {\mathcal D}^c_\delta,
   \end{cases}   
   \qquad \forall  \varepsilon\in (0,1].
\ee
\end{theorem}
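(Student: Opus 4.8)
The plan is to bootstrap from the Sobolev estimates of \theo{ErrorEstWaveSchrod} using Sobolev embedding, and then to refine this crude bound by comparing Gaussian beam superpositions of different orders. The first step is a rough estimate valid everywhere: by \theo{ErrorEstWaveSchrod} we have $\|u-u_k\|_{H^s}\leq C\varepsilon^{k/2-s}$, so taking $s=\lceil n/2\rceil+1$ and invoking the Sobolev inequality $\|f\|_{L^\infty}\leq C\|f\|_{H^s}$ for $s>n/2$ gives $\|u-u_k\|_{L^\infty}\leq C\varepsilon^{(k-2\lceil n/2\rceil-2)/2}$, uniformly on $[0,T]\times\Real^n$. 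Since we may apply this with $k$ replaced by any larger beam order $K$, and since higher order beams share all lower order coefficient functions (as noted after \eq{acompact}), the error $|u-u_K|$ is as small as we like everywhere once $K$ is large. The remaining task is to control the \emph{difference} $u_K-u_k$ between two beam superpositions of orders $k<K$, with the sharper rates on the three regions, and then use the triangle inequality $|u-u_k|\leq |u-u_K|+|u_K-u_k|$, choosing $K$ large enough that the first term is negligible compared to the claimed rate.

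Next I would establish a representation formula for $u_K-u_k$. Since the beams of order $K$ and $k$ differ only in having more terms in the Taylor expansions \eq{phidef}, \eq{Adef1}--\eq{Adef2} of the phase and amplitude, their difference can be written as a finite sum of oscillatory integrals of the type \eq{IntDef}, carrying an explicit power of $\varepsilon$. This is the content of the secondary theorem \theo{DiffRep} referred to in the introduction; I would prove it by direct bookkeeping of which monomials $(y-x)^\beta$ and which $\varepsilon^j$ appear in one beam but not the other, keeping track of the localization from the cutoff and the Gaussian decay (P$4'$). The outcome should be
$$
 u_K(t,y)-u_k(t,y) = \sum_{j} \varepsilon^{\lceil k/2\rceil + \ell_j}\, \Int^{\alpha_j}_{\Phi,x,g_j}(t,y) + \Ordo(\varepsilon^\infty),
$$
with $\ell_j\geq 0$, the $g_j$ satisfying a variant of (P5), and the key point that the $j$ with $\ell_j=0$ have $|\alpha_j|$ \emph{odd} when $k$ is odd — this parity is what yields the extra half power via cancellation between $\pm(y-x)$ contributions from adjacent beams.

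The heart of the proof is then the pointwise estimation of the integrals $\Int^\alpha_{\Phi,x,g}(t,y)$ on each region, which is the secondary theorem \theo{Iestimate}. Away from the essential support, $(t,y)\in{\mathcal D}^c_\delta$, the Gaussian factor $e^{-w_4|y-x(t,z)|^2/\varepsilon}$ from (P$4'$) is bounded by $e^{-w_4\delta^2/\varepsilon}$ on the whole domain $K_0$ of integration, giving the spectral rate. Away from caustics, $(t,y)\in{\mathcal C}^c_\delta$, one changes variables $z\mapsto x(t,z)$, which is a local diffeomorphism with Jacobian bounded away from zero, writes $y-x$ as the new integration variable, and performs a stationary-phase / Laplace-type analysis in the complexified phase: the quadratic imaginary part localizes the integral to a $\sqrt\varepsilon$-ball, the $\varepsilon^{-(n+|\alpha|)/2}$ prefactor is compensated by the volume $\varepsilon^{n/2}$ and the factor $(y-x)^\alpha\sim\varepsilon^{|\alpha|/2}$, and — crucially for odd $|\alpha|$ — the leading term in the Gaussian moment expansion vanishes by oddness, gaining one extra factor $\varepsilon^{1/2}$, so that $|\Int^\alpha_{\Phi,x,g}|\leq C\varepsilon^{\lceil |\alpha|/2\rceil - |\alpha|}$ which combined with the $\varepsilon^{\lceil k/2\rceil}$ prefactor gives the rate $\varepsilon^{\lceil k/2\rceil}$. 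Near a caustic, $(t,y)\in{\mathcal C}_\delta$, the change of variables degenerates, $\det J$ can vanish, and one can only bound the integral crudely: $|\Int^\alpha_{\Phi,x,g}|\leq \varepsilon^{-(n+|\alpha|)/2}\int_{K_0}|g||y-x|^{|\alpha|}e^{-w_4|y-x|^2/\varepsilon}dz\leq C\varepsilon^{-n/2}$, losing the full dimensional factor $\varepsilon^{-n/2}$ and yielding only $\varepsilon^{\lceil k/2\rceil - n/2}\leq\varepsilon^{(k-n)/2}$. The main obstacle is the non-caustic stationary-phase estimate: one must make the change of variables and the subsequent asymptotic expansion \emph{uniform} in $(t,y)$ over the compact-in-$t$ but unbounded-in-$y$ region ${\mathcal C}^c_\delta\cap{\mathcal D}_\delta$, carefully handling the overlap of the supports of the beams (how many $z$ contribute for a given $y$) and the fact that near $\partial{\mathcal C}_\delta$ the Jacobian bound and the number of pre-images can both vary — this is where property (P3) and a partition-of-unity / layer-cake argument over the distance to the caustic set will be needed.
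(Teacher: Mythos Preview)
Your strategy is exactly the paper's: Sobolev embedding gives a crude global bound, you compare with a higher order beam $u_K$, represent $u_K-u_k$ as a sum of oscillatory integrals \eq{IntDef}, and estimate these pointwise on the three regions. Two points of your bookkeeping are off, though, and one of them hides a genuine difficulty.

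First, the prefactor in the representation should be $\varepsilon^{k/2}$, not $\varepsilon^{\lceil k/2\rceil}$; the paper's \theo{DiffRep} gives
\[
   u_{k+p}-u_k=\varepsilon^{k/2}\sum_j\varepsilon^{\ell_j}\Int^{\beta_j}_{\Psi_j,x_j,g_j},\qquad \ell_j\geq 0,
\]
with the parity statement that $|\beta_j|\equiv k\pmod 2$ when $\ell_j=0$. The extra half power for odd $k$ then comes \emph{entirely} from the pointwise bound (\theo{Iestimate}): $|\Int^\alpha|\leq C$ for $|\alpha|$ even and $|\Int^\alpha|\leq C\varepsilon^{1/2}$ for $|\alpha|$ odd, away from caustics. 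Your formula $|\Int^\alpha|\leq C\varepsilon^{\lceil|\alpha|/2\rceil-|\alpha|}$ is not the right accounting; the $\varepsilon^{-|\alpha|/2}$ in the definition is already cancelled by the $(y-x)^\alpha$ factor under the Gaussian, so the bound is $O(1)$ (even) or $O(\varepsilon^{1/2})$ (odd), independent of $|\alpha|$.

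Second, and more substantively, ``a variant of (P5)'' underestimates what is needed for the $g_j$. In the difference $u_{k+p}-u_k$ the phase change produces a factor $\tilde g(t,y-x,z,\varepsilon)=\int_0^1 e^{is(\Phi_{k+p}-\Phi_k)/\varepsilon}ds$ that depends on $\varepsilon$ and on $y$ through $y-x(t,z)$, so (P5) fails. The paper replaces (P5) by (P6)--(P8); the crucial one for the odd-$|\alpha|$ gain is (P8), a Lipschitz-in-$z$ estimate of the form $|g(t,y_0,z)-g(t,y_0,z_0)|\lesssim |z-z_0|(1+|z-z_0|^q/\varepsilon^\ell)$ with $q\geq 2\ell$, valid when $y_0=x(t,z_0)$. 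This is exactly what lets you freeze $g$ at $z_0$ before invoking oddness: without it the term you call ``leading Gaussian moment'' does not isolate cleanly and the $\sqrt\varepsilon$ gain is lost. The uniformity in $(t,y)$ away from caustics is handled not by a layer-cake over distance to $\mathcal C_x$, but by a uniform inverse function theorem (\theo{invfunc}) giving a fixed radius $r$ and a bound $M_\delta$ on the number of preimages $z_j$ of $y$; the integral is then split into a ball around each $z_j$ (where the quadratic phase approximation and \lem{ICancel} give the odd cancellation) and a remainder set $S$ on which $|y-x|$ is uniformly bounded below, giving exponential smallness.
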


The theorem also immediately gives us an estimate for the initial data
in all $L_p$-norms.
\begin{corollary}
Under the same conditions as in \theo{MaxNormError}, 
there is a constant $C_p$ 
for each $1\leq p\leq \infty$ 
such that
\be{initres}
||u_k(0,y)-u(0,y)||_{L^p(\Real^n)} \leq C_p
     \varepsilon^{\lceil k/2\rceil},\qquad 1\leq p \leq \infty,
     \quad
     \forall \varepsilon\in(0,1].
\ee
\end{corollary}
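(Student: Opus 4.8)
The plan is to derive the corollary directly from the three-region max norm estimate \eq{maininfres} evaluated at $t=0$. First I would fix $\delta>0$ arbitrarily (its value is irrelevant for the conclusion since the constant is allowed to depend on it) and observe that at the initial time the fattened caustic set plays no role: it suffices to split $\Real^n$ into the region $y\in \{y : {\rm dist}(y,K_0)\le \delta\}$, call it $K_\delta$, and its complement. On $K_\delta$ we are in the ``away from caustics or near caustics'' situation, and in either subcase \eq{maininfres} gives a pointwise bound by $C_{\delta}\,\varepsilon^{(k-n)/2}$ at worst; but in fact at $t=0$ one can do better, so the real point is to use the $L^\infty$ bound $\varepsilon^{\lceil k/2\rceil}$ that already holds on ${\mathcal C}^c_\delta$ together with \theo{InitialData} to control the (measure-zero-in-effect, but here genuinely small) caustic neighbourhood; alternatively, and more cleanly, note that at $t=0$ the ray map is the identity, $x(0,z)=z$ and $J(0,z)={\rm Id}$, so $\det J(0,z)=1\neq 0$ and hence ${\mathcal C}_x$ contains no point with $t=0$. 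Consequently, for $\delta$ small enough the slice $\{0\}\times K_\delta$ lies entirely in ${\mathcal C}^c_\delta$, and \eq{maininfres} yields $|u_k(0,y)-u(0,y)|\le C_{\delta}\,\varepsilon^{\lceil k/2\rceil}$ for all $y$ with ${\rm dist}(y,K_0)\le\delta$.

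Next I would handle the exterior region $y\in K_\delta^c$. Since $B_\ell$ is supported in $K_0$, the exact solution's initial data vanishes there, and the set $\{0\}\times K_\delta^c$ lies in ${\mathcal D}^c_\delta$ (because $x(0,K_0)=K_0$, so ${\rm dist}(y,x(0,K_0))={\rm dist}(y,K_0)>\delta$). Hence the third case of \eq{maininfres}, applied with $m$ chosen as large as we please, gives $|u_k(0,y)-u(0,y)|\le C_{\delta,m}\,\varepsilon^m$ there. In particular, taking $m\ge \lceil k/2\rceil$ we get the uniform bound $|u_k(0,y)-u(0,y)|\le C\,\varepsilon^{\lceil k/2\rceil}$ on all of $\Real^n$, which is \eq{initres} for $p=\infty$.

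Finally, for $1\le p<\infty$ I would combine the $L^\infty$ bound just obtained with an $L^1$-type bound to interpolate, or more simply bound the $L^p$ norm directly by splitting over the same two regions. On $K_\delta$, which has finite Lebesgue measure $|K_\delta|<\infty$, we have $\|u_k(0,\cdot)-u(0,\cdot)\|_{L^p(K_\delta)}\le |K_\delta|^{1/p}\,\|u_k(0,\cdot)-u(0,\cdot)\|_{L^\infty}\le |K_\delta|^{1/p}\,C\,\varepsilon^{\lceil k/2\rceil}$. On $K_\delta^c$ the pointwise spectral bound $|u_k(0,y)-u(0,y)|\le C_{\delta,m}\varepsilon^m$ holds with any $m$, but to make the integral converge I would instead use that $u_k(0,y)$ itself decays like a Gaussian in $\varepsilon$ there — concretely, from (P$4'$) and the construction in \sect{GBdef}, $|u_k(0,y)|\le C\varepsilon^{-n/2}\int_{K_0} e^{-w_4|y-z|^2/\varepsilon}dz$ for $y\in K_\delta^c$, which, just as in the proof of \lem{cutoffdiff}, is bounded by $C\varepsilon^{-n/2}e^{-w|y|^2/\varepsilon}$ for large $|y|$ and is therefore $L^p$-integrable with $L^p$ norm that is exponentially small in $\varepsilon$; the exact solution $u(0,\cdot)$ vanishes on $K_\delta^c$. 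Adding the two regions gives \eq{initres}. The only mildly delicate point is the tail estimate on $K_\delta^c$ ensuring the $L^p$ integral converges and is $o(\varepsilon^{\lceil k/2\rceil})$; everything else is a direct consequence of \eq{maininfres} and the fact that $K_\delta$ has finite measure.
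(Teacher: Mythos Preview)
Your argument is correct and the core idea---that $x(0,z)=z$ forces $\det J(0,z)=1$, so there are no caustics at $t=0$ and the first case of \eq{maininfres} applies---is exactly the paper's. Two points of comparison are worth noting. First, once you observe (as you do) that by continuity of $\det J$ on the compact set $[0,T]\times K_d$ there is $\delta_0>0$ with $\det J(t,z)\neq 0$ for all $t\le\delta_0$, it follows that every point of $\mathcal{C}_x$ has time coordinate $\ge\delta_0$, and hence $\{0\}\times\Real^n\subset\mathcal{C}_\delta^c$ for $\delta<\delta_0$; the split into $K_\delta$ and $K_\delta^c$ for the $L^\infty$ bound is therefore unnecessary. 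The paper phrases this as re-applying \theo{MaxNormError} with $T=\delta$ so that $\mathcal{C}_\delta=\emptyset$, which is the same observation. Second, for $p<\infty$ the paper avoids your Gaussian tail computation entirely by noting that both $u(0,\cdot)$ and $u_k(0,\cdot)$ are compactly supported---the latter because the cutoff $\varrho_\eta(y-z)$ forces $u_k(0,y)=0$ when ${\rm dist}(y,K_0)>2\eta$---so the $L^\infty$ bound on a fixed compact set immediately gives all $L^p$ bounds. Your tail estimate is correct and has the merit of not relying on $\eta<\infty$, but it is more work than needed.
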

\begin{proof}
Since $x(0,z)=z$ and $K_d$ is compact,  there exists $\delta>0$ such that
 $\det J(t,z)\neq 0$ for $t\in[0,\delta]$ and $z\in K_d$.
Hence,   there is a caustic free initial interval $[0,\delta]$ and
for $T=\delta$, the fattened caustic
set ${\mathcal C}_{\delta}$ is empty.
\theo{MaxNormError} then shows that there is a constant $C$ such that
for all $\varepsilon\in(0,1]$.
$$
 |u_k(t,y)-u(t,y)| \leq C
     \varepsilon^{\lceil k/2\rceil}, \qquad  \forall (t,y)\in [0,\delta]\times\Real^n.
$$
Since initial data for both $u_k$ and $u$ is compactly supported,
the result extends to all $L_p$-norms at $t=0$.
\end{proof}

We prove \theo{MaxNormError} starting from
a standard Sobolev inequality and the result in the previous section, 
namely
\be{sobolevpessimistic}
\sup_{t\in[0,T]}  ||u(t,\,\cdot\,)-u_k(t,\,\cdot\,)||_{L^\infty(\Real^n)} 
  \leq C\sup_{t\in[0,T]} ||u(t,\,\cdot\,)-u_k(t,\,\cdot\,)||_{H^s(\Real^n)} \leq 
  C \varepsilon^{\frac{k}{2}-s},
\ee
for any $s>n/2$, and $s\geq 1$ for the wave equation. 
We take $s=\lfloor n/2\rfloor +1$ to ensure this.
The estimate \eq{sobolevpessimistic} is rather pessimistic.
However, we can improve it by using the fact that better estimates
can be proved for the difference between beams of different orders.
Let $p=2\lfloor n/2\rfloor+3+m'=2s+1+m'$
where $m'\in\Znumbers^+$ and $m'\geq \max(2m-k-1,0)$.
Assume that 
$\eta$ is admissible also for $K_d$, $T$ and the 
higher order Gaussian beam phase $\Phi_{k+p}$, for the Schr\"odinger equation, or
$\Phi^\pm_{k+p}$ for the wave equation.
Then, by \eq{sobolevpessimistic}
\begin{align}\lbeq{ekest}
   |u(t,y)-u_k(t,y)|   &\leq ||u(t,\,\cdot\,)-u_{k+p}(t,\,\cdot\,)||_{L^\infty(\Real^n)}   + |u_{k+p}(t,y)-u_k(t,y)|  \nonumber\\
   &\leq C\varepsilon^{(k+p)/2-s} 
   +|u_{k+p}(t,y)-u_k(t,y)|,
\end{align}
for $(t,y)\in[0,T]\times\Real^n$.
We now need to use a representation result similiar to
\lem{PuInTermsOfQ} showing that
the difference between beams of different orders can
be written as a sum of oscillatory integrals of the type \eq{IntDef},
but where the property (P5) is replaced by three new properties, namely:

\begin{itemize}
\item[\bf (P6)] $\Phi(t,0,z)$ and
$\nabla_y\Phi(t,0,z)$ are real and 
\be{B3}
  J(t,z)^T\nabla_y\Phi(t,0,z)=\nabla_z\Phi(t,0,z),
\ee
for all $t\geq 0$ and $z\in\Real^n$.
\item[\bf (P7)]
 $g(t,y,z,\varepsilon)\in L^\infty([0,T]\times\Real^{n}\times{K}_d\times \Real^+)$
is compactly supported in $K_0$ for fixed $(t,y,\varepsilon)$, and 
there are positive constants $D_7$, $w_7$, such that
for all $(t,z)\in{\mathcal K}_d$, $\varepsilon>0$ and $y\in\Real^n$,
\be{B4}
    \left|g(t,y,z,\varepsilon)e^{i\Phi(t,y-x(t,z),z)/\varepsilon}\varrho_\eta(y-x(t,z))\right|\leq 
    D_7e^{-w_7|y-x(t,z)|^2/\varepsilon},
    \ee
    \item[\bf (P8)] when $y_0=x(t,z_0)$,
there are positive constants $D_8$, $w_8$, such that
for all $t\in[0,T]$, $z,z_0\in K_d$,
$\varepsilon>0$ and $y_0\in \Real^n$,
\begin{align}\lbeq{B5}
\lefteqn{    \left|\Bigl(g(t,y_0,z,\varepsilon)-g(t,y_0,z_0,\varepsilon)\Bigr)
    e^{i\Phi(t,y_0-x(t,z),z)/\varepsilon}\varrho_\eta(y_0-x(t,z))
    \right|}
    \hskip 20 mm &\nonumber \\
    &\leq  D_8|z-z_0|\left(1+\frac{|z-z_0|^{q}}{\varepsilon^\ell}\right)e^{-w_8|y_0-x(t,z)|^2/\varepsilon},
\end{align}
with $q\geq 2\ell$.
\end{itemize}

We are then able to prove the following theorem.
\begin{theorem}\lbtheo{DiffRep}
Let $u_k$ and $u_{k+p}$ be
the $k$-th and $(k+p)$-th order Gaussian beam
superpositions given in \sect{GBdef} for the Schr\"odinger
equation \eq{Schrodinger} or the wave equation \eq{WaveEq}.
Suppose the same cutoff width $\eta$
 is used for both
$u_k$ and $u_{k+p}$.
Then
there is a finite $J$ such that
\be{DiffRep}
u_{k+p}(t,y)-u_k(t,y)=
%\varepsilon^{\frac{k}{2}}\sum_{j=0}^{J} 
%\Int^{\alpha_j}_{\Phi_{k},x,g_j}(t,y) +
\varepsilon^{\frac{k}{2}}\sum_{j=0}^{J} \varepsilon^{\ell_j} 
\Int^{\beta_j}_{\Psi_{j},x_j,g_j}(t,y),
\ee
where $(\Psi_j,x_j)$ is one of 
$(\Phi_k,x)$,
$(\Phi_{k+p},x)$, for the Schr\"odinger equation, or
$(\Phi^\pm_k,x^\pm)$,
$(\Phi^\pm_{k+p},x^\pm)$, for the wave equation. 
Moreover,
$\ell_j\geq 0$ and 
when $\ell_j=0$, the parity (odd/even) of
$|\beta_j|$ is the same as that of
$k$.

In addition, if
$\eta$ is admissible for $K_d$, $T>0$ and the 
corresponding Gaussian beam phases,
$\Phi_k$, $\Phi_{k+p}$,
for the Schr\"odinger equation, or
$\Phi^\pm_k$, $\Phi^\pm_{k+p}$,
for the wave equation, 
then each triplet $(\Psi_j,x_j,g_j)$ have properties (P1)--(P4) and (P6)--(P8).
\end{theorem}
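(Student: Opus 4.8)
The plan is to construct the representation \eq{DiffRep} explicitly by subtracting the two Gaussian beam integrals and expanding the difference into its constituent Taylor terms. Recall from \eq{vdef}--\eq{Adef2} that the $(k+p)$-th order beam and the $k$-th order beam share all lower-order coefficient functions (see \eq{coeffsmooth} and the discussion following it), and both use the same cutoff $\varrho_\eta$. Hence in the difference $u_{k+p}-u_k$ the shared terms cancel algebraically at the level of the integrand, and what survives is, first, the contribution from the higher-order amplitude terms $\varepsilon^j \bar a_{j,k}$ with $\lceil k/2\rceil\le j\le\lceil (k+p)/2\rceil-1$ and the extra Taylor coefficients $a_{j,\beta}$ with $k-2j-1<|\beta|\le k+p-2j-1$, multiplied by $e^{i\Phi_{k}/\varepsilon}$; and second, the contribution arising because the phase in the exponent changes from $\Phi_k$ to $\Phi_{k+p}$ (the extra phase coefficients $\phi_\beta$ with $k+1<|\beta|\le k+p+1$). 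The first family of terms is already a finite sum of monomials $(y-x)^\beta$ times smooth $z$-coefficients times $e^{i\Phi_k/\varepsilon}\varrho_\eta$, so after pulling out the prefactor $(2\pi\varepsilon)^{-n/2}$ and the appropriate power of $\varepsilon$ from \eq{Adef1} each is exactly of the form $\varepsilon^{k/2}\varepsilon^{\ell_j}\Int^{\beta_j}_{\Phi_k,x,g_j}$. For the second family I would write $e^{i\Phi_{k+p}/\varepsilon}=e^{i\Phi_k/\varepsilon}e^{i(\Phi_{k+p}-\Phi_k)/\varepsilon}$ is \emph{not} the right move (the exponent is not small); instead the cleanest route is to keep $e^{i\Phi_{k+p}/\varepsilon}$ intact and simply regard the whole $(k+p)$-beam integrand as a sum of $\Int$-type integrals with phase $\Phi_{k+p}$ and the $k$-beam integrand as $\Int$-type integrals with phase $\Phi_k$; then \eq{DiffRep} is just the union of these two lists, which is why both $(\Phi_k,x)$ and $(\Phi_{k+p},x)$ (resp.\ the wave modes) appear in the statement.

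The second point to establish is the power counting: $\ell_j\ge 0$, and when $\ell_j=0$ the parity of $|\beta_j|$ equals that of $k$. This follows by bookkeeping from \eq{Adef1}--\eq{Adef2}: a term $\varepsilon^j a_{j,\beta}(t,z)(y-x)^\beta$ contributes, after extracting $\varepsilon^{-n/2}$ from the prefactor and $\varepsilon^{-(n+|\beta|)/2}$ built into the definition \eq{IntDef}, a net power $\varepsilon^{j+|\beta|/2}$. Writing this as $\varepsilon^{k/2}\varepsilon^{\ell}$ with $\ell = j + |\beta|/2 - k/2$, the constraint $|\beta|\le k+p-2j-1$ from $\bar a_{j,k}$ does not directly give $\ell\ge 0$, but the surviving terms are precisely those with $|\beta|>k-2j-1$ (the ones \emph{not} present in $u_k$), hence $|\beta|\ge k-2j$, giving $\ell\ge 0$; and $\ell=0$ forces $|\beta|=k-2j$, so $|\beta|\equiv k\pmod 2$. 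The extra-phase-coefficient terms carry even higher powers of $\varepsilon$ (they enter through Taylor remainders of the exponential beyond the shared part), so $\ell_j>0$ strictly for those, consistent with the claim. I would organize this as a short lemma on the $\varepsilon$-degree of each monomial block, then read off the two assertions.

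The third and most substantial part is verifying that each triplet $(\Psi_j,x_j,g_j)$ satisfies (P1)--(P4) and (P6)--(P8). Properties (P1)--(P4) for $\Phi_k$ and $\Phi_{k+p}$ (and the wave modes) are immediate from \lem{GBphaseOKforQ} once $\eta$ is admissible for both phases, as hypothesized. Property (P6) is the eikonal/transport compatibility relation $J^T\nabla_y\Phi(t,0,z)=\nabla_z\Phi(t,0,z)$; this is a known identity for Gaussian beam phases — one differentiates the relations $\Phi(t,0,z)=\phi_0(t,z)$ and $\nabla_y\Phi(t,0,z)=p(t,z)$ in $z$, uses $\partial_z x = J$, $\partial_t x$ equals $\nabla_p H$, and the Hamiltonian ODEs \eq{GBODEGen} for $(x,p,\phi_0)$, together with $\partial_z\phi_0 = p^T\partial_z x$ which itself propagates from the initial condition $\phi_0(0,z)=\varphi_0(z)$, $p(0,z)=\nabla\varphi_0(z)$, $x(0,z)=z$; I would state it as a lemma and give the short ODE computation. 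Property (P7) is exactly (P$4'$) repackaged: $g_j$ is a product of smooth, $t,z$-uniformly bounded coefficient functions (by \eq{suplipfin} and \eq{coeffsmooth}), compactly supported in $z$ by \eq{acompact}, and any polynomial growth in $|y-x|$ coming from the monomial factors $(y-x)^{\gamma}$ inside $g_j$ is absorbed into the Gaussian $e^{-w_4|y-x|^2/\varepsilon}$ using the elementary bound $|t|^p e^{-ct^2/\varepsilon}\le D e^{-ct^2/2\varepsilon}$ already invoked in the proof of \lem{cutoffdiff}, yielding \eq{B4} with $w_7=w_4/2$. The real work is (P8), the Lipschitz-in-$z$ estimate \eq{B5}. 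The main obstacle is that differentiating $g_j(t,y_0,z,\varepsilon)e^{i\Phi(t,y_0-x(t,z),z)/\varepsilon}\varrho_\eta$ in $z$ hits the exponent and produces a factor $\varepsilon^{-1}\partial_z[\Phi(t,y_0-x(t,z),z)]$, which is $O(1/\varepsilon)$ and, prima facie, not controllable. The resolution — and the crux of the whole theorem — is that at the evaluation point $y_0=x(t,z_0)$ the $z$-derivative of the phase \emph{vanishes to first order} as $z\to z_0$: by the chain rule $\partial_z[\Phi(t,y_0-x(t,z),z)]|_{z=z_0} = -J(t,z_0)^T\nabla_y\Phi(t,0,z_0)+\nabla_z\Phi(t,0,z_0)=0$ by (P6). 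Hence $\partial_z[\Phi(t,y_0-x(t,z),z)]$ is itself $O(|z-z_0|)$ near $z_0$, and one more order is hidden in the imaginary part of $\Phi$, which is $O(|y_0-x(t,z)|^2)\sim O(|z-z_0|^2)$ by (P3)--(P4). So I would Taylor-expand $\Phi(t,y_0-x(t,z),z)$ about $z=z_0$, integrate the $z$-derivative of the integrand from $z_0$ to $z$, and carefully track where the factors of $\varepsilon$ sit: the dangerous $1/\varepsilon$ is always multiplied either by $|z-z_0|^2$ (giving a bounded or $|z-z_0|^q/\varepsilon^\ell$-type contribution with $q=2\ell$) or by a quantity already exponentially damped by the Gaussian, after again using $|t|^p e^{-ct^2/\varepsilon}\le D e^{-ct^2/2\varepsilon}$ to trade polynomial growth for a fraction of the Gaussian. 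Keeping $q\ge 2\ell$ throughout is exactly what the bookkeeping delivers, because each inverse power of $\varepsilon$ is escorted by two powers of $|z-z_0|$. I expect this $z$-Lipschitz estimate, and in particular the use of (P6) to kill the leading $1/\varepsilon$ term, to be the delicate point of the proof; the rest is careful but routine expansion and the uniform bounds from \eq{suplipfin}, \eq{coeffsmooth} and \eq{acompact}.
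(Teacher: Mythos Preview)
The central gap is your handling of the phase-change contribution. Writing $u_{k+p}$ as a list of $\Int$-integrals with phase $\Phi_{k+p}$ and $u_k$ as a list with phase $\Phi_k$, and then taking the union, does \emph{not} deliver the prefactor $\varepsilon^{k/2}$: the shared amplitude terms (for instance the leading $a_{0,0}(t,z)(y-x)^0$) appear in both lists but with different phases, so they do not cancel algebraically. Each separately produces an $\Int^{0}$-integral of size $O(1)$, not $O(\varepsilon^{k/2})$, and the representation \eq{DiffRep} with $\ell_j\ge 0$ fails. Your own power-counting paragraph already presupposes that the surviving amplitude terms have $|\beta|\ge k-2j$, i.e.\ exactly the cancellation that the union-of-lists approach forfeits. (Incidentally, the claim that $\ell_j>0$ strictly for the phase terms is also incorrect.)

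The paper instead splits $v_{k+p}-v_k=(A_{k+p}-A_k)e^{i\Phi_{k+p}/\varepsilon}+A_k\bigl(e^{i\Phi_{k+p}/\varepsilon}-e^{i\Phi_k/\varepsilon}\bigr)$ and, for the second piece, introduces the $\varepsilon$-dependent factor $\tilde g(t,y,z,\varepsilon)=\int_0^1 e^{is(\Phi_{k+p}-\Phi_k)/\varepsilon}ds$, so that $e^{i\Phi_{k+p}/\varepsilon}-e^{i\Phi_k/\varepsilon}=(i/\varepsilon)\,\tilde g\,(\Phi_{k+p}-\Phi_k)\,e^{i\Phi_k/\varepsilon}$. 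Since $\Phi_{k+p}-\Phi_k$ begins at $y$-degree $k+2$, the $1/\varepsilon$ is absorbed and the count $\ell_j=j-1+|\beta_1+\beta_2|/2-k/2\ge 0$ goes through, with $|\alpha_j|\equiv k\pmod 2$ when $\ell_j=0$. This is precisely why (P7)--(P8) allow $g_j$ to depend on $y$ and $\varepsilon$. It also changes what is hard in (P8): the estimate \eq{B5} concerns only $g(t,y_0,z,\varepsilon)-g(t,y_0,z_0,\varepsilon)$, with the outer exponential $e^{i\Phi_k/\varepsilon}\varrho_\eta$ evaluated at $z$ throughout, so no $z$-derivative hits that exponential and (P6) is not the rescue here. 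The actual work is controlling the $\varepsilon$-dependence inside $\tilde g$: one uses $\tilde g(t,0,z_0,\varepsilon)=1$ and bounds $\tilde g-1$ via $|\Phi_{k+p}-\Phi_k|/\varepsilon\le C|y_0-x(t,z)|^{k+2}/\varepsilon\le C'|z-z_0|^{k+2}/\varepsilon$, which yields \eq{B5} with $q=k+1$, $\ell=1$.
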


Applying \theo{DiffRep} to \eq{ekest} yields for $t\in[0,T]$,
\be{ekest2}
   |u(t,y)-u_k(t,y)|
\leq C\varepsilon^{(k+1+m')/2} +
\varepsilon^{\frac{k}{2}} \sum_{j=0}^{J}\varepsilon^{\ell_j} 
\left|\Int^{\beta_j}_{\Psi_{j},x_j,g_j}(t,y)\right|,
\ee
where we used the fact that $(k+p)/2-s = (k+1+m')/2$.
The last piece needed to prove \theo{MaxNormError} is
a pointwise estimate of $\Int^{\alpha}_{\Phi,x,g}(t,y)$,
which is contained in the final theorem of this section,
\begin{theorem}\lbtheo{Iestimate}
If $(\Phi,x,g)$ have properties
(P1)--(P4) and (P6)--(P8), then, for each $\delta>0$
there are constants $C_\delta$ and $w_\delta>0$
such that
\be{Iestmax}
 \left|\Int^\alpha_{\Phi,x,g}(t,y)\right| \leq C_\delta
    \begin{cases}
     1, & \text{\rm $|\alpha|$ even, $(t,y)\in  {\mathcal C}^c_{x,\delta}$}, \\
   \varepsilon^{1/2}, & \text{\rm $|\alpha|$ odd, $(t,y)\in {\mathcal C}^c_{x,\delta}$}, \\
   \varepsilon^{-n/2}, & (t,y)\in {\mathcal C}_{x,\delta},\\
     \exp(-w_\delta/\varepsilon), & (t,y)\in  {\mathcal D}^c_{x,\delta},
   \end{cases}   
\ee
for all
$\varepsilon\in(0,1]$. The constants
$C_\delta$, $w_\delta$ depend on 
$\alpha$, $\Phi$, $x$, $g$.
\end{theorem}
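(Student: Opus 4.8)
The plan is to estimate the oscillatory integral $\Int^\alpha_{\Phi,x,g}(t,y)$ in four regimes by exploiting the Gaussian decay from (P$4'$)/(P7) together with the non-degeneracy structure of the phase from (P3), (P6). First, for the easy cases: when $(t,y)\in{\mathcal D}^c_{x,\delta}$, every point $z\in K_0$ has $|y-x(t,z)|\geq\delta$, so (P7) gives $|g\,e^{i\Phi/\varepsilon}\varrho_\eta|\leq D_7 e^{-w_7\delta^2/\varepsilon}$ pointwise; the prefactor $\varepsilon^{-(n+|\alpha|)/2}$ and the $(y-x)^\alpha$ factor are absorbed (a power of $|y-x|$ times a Gaussian is another Gaussian), and integrating over the compact set $K_0$ yields $\exp(-w_\delta/\varepsilon)$. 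When $(t,y)\in{\mathcal C}_{x,\delta}$, I simply bound $|(y-x)^\alpha e^{i\Phi/\varepsilon}\varrho_\eta|$ by a constant using (P7) and integrate: $|\Int^\alpha|\leq C\varepsilon^{-(n+|\alpha|)/2}\cdot\varepsilon^{|\alpha|/2}=C\varepsilon^{-n/2}$, where the $\varepsilon^{|\alpha|/2}$ comes from $\sup_z(|y-x|^{|\alpha|}e^{-w_7|y-x|^2/\varepsilon})\leq C\varepsilon^{|\alpha|/2}$.

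The substantial work is the regime $(t,y)\in{\mathcal C}^c_{x,\delta}$, away from caustics, where I must extract the rate $1$ for even $|\alpha|$ and the gain $\varepsilon^{1/2}$ for odd $|\alpha|$. The idea is a stationary-phase/Laplace-type analysis localized to the center ray. Since $(t,y)\notin{\mathcal C}_{x,\delta}$, if $y=x(t,z_0)$ for some $z_0$ then $\det J(t,z_0)\neq 0$, so by the inverse function theorem the map $z\mapsto x(t,z)$ is a local diffeomorphism near $z_0$ with bounded inverse uniformly on the compact set. Change variables to $w=x(t,z)-y$ (or keep $z$ but use $|y-x(t,z)|\gtrsim|z-z_0|$ from non-degeneracy); the Gaussian weight $e^{-w_7|y-x|^2/\varepsilon}$ then localizes the integral to a $\sqrt\varepsilon$-neighborhood of $z_0$, which contributes at most $O(\varepsilon^{n/2})$ in volume, cancelling $\varepsilon^{-n/2}$ of the prefactor. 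The factor $(y-x(t,z))^\alpha$ contributes $O(\varepsilon^{|\alpha|/2})$ and cancels the remaining $\varepsilon^{-|\alpha|/2}$, giving the baseline $O(1)$. For the odd case, the extra $\varepsilon^{1/2}$ gain must come from oddness: one Taylor-expands the full amplitude $g(t,y,z)e^{i\,\mathrm{Re}\,\Phi/\varepsilon}$ around $z_0$ and observes that the leading term of the expansion, multiplied by the odd-degree monomial $(y-x(t,z))^\alpha\approx(-J(t,z_0)(z-z_0))^\alpha$, integrates to zero against the even Gaussian; the first nonvanishing contribution is one order higher, i.e.\ $O(\varepsilon^{(|\alpha|+1)/2})$. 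Here (P6), the relation $J^T\nabla_y\Phi(t,0,z)=\nabla_z\Phi(t,0,z)$, is exactly what guarantees that the linear-in-$(z-z_0)$ phase variation is consistent with the geometry so that the Gaussian is genuinely centered and even to leading order; and property (P8) provides the needed Lipschitz/Hölder control on $g$ (with the $\varepsilon^{-\ell}$, $q\geq 2\ell$ bookkeeping) so that the remainder terms after Taylor expansion are still integrable with the claimed rate.

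The estimates must be made uniform in $(t,y)$ over the (non-compact) slab ${\mathcal C}^c_{x,\delta}\subset[0,T]\times\Real^n$. Compactness of ${\mathcal K}_d$ handles the $(t,z)$ variables, and for $y$ outside a bounded neighborhood of $x(t,K_0)$ one falls into the ${\mathcal D}^c_{x,\delta}$ case, so effectively $y$ ranges over a compact set too; a covering/partition argument then patches the local stationary-phase estimates, using that the constant in the inverse function theorem and the lower bound in (P3) are uniform. The main obstacle I anticipate is making the odd-order cancellation rigorous and uniform: one cannot literally invoke classical stationary phase because $\Phi$ has a positive-definite imaginary part (it is a complex Gaussian, not a pure oscillation), and $g$ is only $L^\infty$ with the weak regularity in (P7)--(P8) rather than smooth. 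So the argument must be a direct, hands-on Laplace-type expansion — split $\Int^\alpha$ into the ``main'' contribution from a $z$ near each preimage $z_0$ of $y$ plus an exponentially small tail, Taylor-expand only the smooth pieces ($\Phi$, $x$) while treating $g$ via (P8), and carefully track that the odd monomial kills the leading symmetric term. Keeping the powers of $\varepsilon$ straight through the change of variables $z-z_0=\sqrt\varepsilon\,\zeta$, and ensuring the $\left(1+|z-z_0|^q/\varepsilon^\ell\right)$ factor in (P8) does not destroy the gain (this is why $q\geq 2\ell$ is imposed), is the delicate bookkeeping at the heart of the proof.
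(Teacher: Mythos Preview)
Your plan is correct and matches the paper's proof essentially step for step: the two easy regimes are handled exactly as you describe (via (P7) and the elementary bound $|y-x|^b e^{-c|y-x|^2/\varepsilon}\le C\varepsilon^{b/2}$), and the caustic-free regime is treated by a uniform inverse function theorem, localization to the finitely many preimages $z_j$ of $y$, an exponentially small tail away from them, and---for odd $|\alpha|$---replacement of $\Phi$ by its quadratic Taylor polynomial (whose linear term vanishes precisely by (P6)), of $(y-x(t,z))^\alpha$ by $(J(t,z_0)(z_0-z))^\alpha$, and of $g(\cdot,z)$ by $g(\cdot,z_0)$, so that the leading piece is an odd monomial against a centered complex Gaussian and integrates to zero, while the three resulting error terms each carry an extra $|z-z_0|$ and hence an extra $\varepsilon^{1/2}$ (the $q\ge 2\ell$ condition in (P8) is used exactly where you say). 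One small correction: you should Taylor-expand the full complex phase $\Phi$, not just $\mathrm{Re}\,\Phi$, keeping the approximate exponential $e^{i\tilde\Phi/\varepsilon}$ intact rather than expanding it; and note that (P3) is not actually needed here---the relevant non-degeneracy is $\det J\neq 0$ on ${\mathcal C}^c_{x,\delta}$, which is what drives the uniform inverse function theorem.
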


Using \theo{Iestimate} in \eq{ekest2}
we have for $(t,y)\in  {\mathcal C}^c_\delta\subset (\cup_j {\mathcal C}_{x_j,\delta})^c=
\cap_j {\mathcal C}^c_{x_j,\delta}$,
$$
\varepsilon^{\ell_j} 
\left|\Int^{\beta_j}_{\Psi_{j},x_j,g_j}(t,y)\right|
\leq C \begin{cases}
1, & \text{$\ell_j=0$ and $k$ even},\\
\varepsilon^{1/2},& \text{$\ell_j=0$ and $k$ odd},\\
\varepsilon^{\ell_j}, & \ell_j\geq 1,
\end{cases}
\leq
C \begin{cases}
1, & \text{$k$ even},\\
\varepsilon^{1/2},& \text{$k$ odd},
\end{cases}
$$
since $k$ and $|\beta_j|$ have the same parity when $\ell_j=0$
and $\varepsilon\in(0,1]$.
Therefore, 
$$
\varepsilon^{\frac{k}{2}}\varepsilon^{\ell_j} 
\left|\Int^{\beta_j}_{\Psi_{j},x_j,g_j}(t,y)\right|
\leq C \varepsilon^{\lceil k/2\rceil},
$$
and because $m'\geq 0$, the first case in \eq{maininfres}
is proved.
When
$(t,y)\in  {\mathcal D}^c_\delta\subset (\cup_j {\mathcal D}_{x_j,\delta})^c=
\cap_j {\mathcal D}^c_{x_j,\delta}$,
the second term in \eq{ekest2} is 
asymptotically smaller than all powers of $\varepsilon$, so the first
term in \eq{ekest2}
dominates, irrespective of $m'\geq 0$. This shows the third
case in \eq{maininfres} since $(k+1+m')/2\geq m$. The second case is finally estimated simply
by the largest term in \theo{Iestimate}. 
\theo{MaxNormError} is thereby proved,
if $\eta$ is indeed admissible for the higher order phase
$\Phi_{k+p}$ or $\Phi^\pm_{k+p}$. 
If not, let $\tilde{\eta}<\eta$ be an admissible cutoff width
for $K_d$, $T$ and the higher order phase. 
\lem{GBphaseOKforQ} ensures the existence of 
such $\tilde{\eta}$.
Denote by $\tilde{u}_k$ and $\tilde{u}_{k+p}$ the Gaussian beam
superpositions of orders $k$ and $k+p$ respectively, which (both)
use $\tilde\eta$ as cutoff width. This width is clearly
admissible for both of them and therefore the theorem holds for $\tilde{u}_k$.
Moreover, by \lem{cutoffdiff}, the difference $|u_k-\tilde{u}_k|$
is exponentially small in $\varepsilon$, which implies that the theorem
also holds for $u_k$.

The remainder of this section is dedicated to proving \theo{DiffRep}
and \theo{Iestimate}. 

%%%%%%%%%%%%%%%%%%%%%%%%%%%%%%%%%%%%%%%%%%%%%%%%%%%%%%%%%%%%%%%%%%%%%%%%%%%%%%%%%%%

\subsection{Proof of \theo{DiffRep}}\lbsec{DiffRepSec}

%%%%%%%%%%%%%%%%%%%%%%%%%%%%%%%%%%%%%%%%%%%%%%%%%%%%%%%%%%%%%%%%%%%%%%%%%%%%%%%%%%%

As we will show below, the Gaussian beam phase $\Psi_j$ of the
oscillatory integrals in \eq{DiffRep} is always one of
$\Phi_k$, $\Phi_{k+p}$, for the Schr\"odinger equation,
and one of
$\Phi^\pm_k$, $\Phi^\pm_{k+p}$, for the wave equation.
All these phases, and their corresponding central rays $x$, $x^\pm$,
have properties (P1)--(P4) by
\lem{GBphaseOKforQ}, and the assumption on $\eta$.
The first step in the proof is a lemma proving that these
phases 
also satisfy (P6).
\begin{lemma}\lblem{GBphase}
For all $k\geq 0$, property (P6) is true for
the Schr\"odinger phase $\Phi_k$ and its central ray $x$,
as well as for the phases
$\Phi_k^\pm$ and central rays $x^\pm$ of the wave equation.
\end{lemma}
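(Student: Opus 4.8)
The plan is to verify the identity \eqref{OR:eq:B3} directly from the structure of the phase \eqref{OR:eq:phidef} and the ODEs for the Gaussian beam coefficients. First observe that the only part of (P6) that requires work is the relation $J^T\nabla_y\Phi_k(t,0,z)=\nabla_z\Phi_k(t,0,z)$; the reality of $\Phi_k(t,0,z)$ and $\nabla_y\Phi_k(t,0,z)$ is already known, since from \eqref{OR:eq:phidef} we have $\Phi_k(t,0,z)=\phi_0(t,z)$ and $\nabla_y\Phi_k(t,0,z)=p(t,z)$, and both $\phi_0$ and $p$ satisfy real ODEs with real initial data (see \eqref{OR:eq:GBODESch}, \eqref{OR:eq:GBODEWave}, \eqref{OR:eq:GBini}). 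Hence the claim reduces to the ODE identity
\[
   J(t,z)^T p(t,z) = \nabla_z \phi_0(t,z), \qquad J(t,z)=D_z x(t,z),
\]
for all $t\ge 0$ and $z\in\Real^n$, for the Schr\"odinger ray/phase pair and for each wave mode.

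The key step is to show that $w(t,z):=J(t,z)^Tp(t,z)-\nabla_z\phi_0(t,z)$ vanishes identically by showing it vanishes at $t=0$ and satisfies a linear homogeneous ODE in $t$. At $t=0$ we have $x(0,z)=z$, so $J(0,z)=\mathrm{Id}$, while $p(0,z)=\nabla\varphi_0(z)=\nabla_z\phi_0(0,z)$, so $w(0,z)=0$. For the time derivative I would use the Hamiltonian form of the leading order ODEs from Remark~\ref{OR:rem:hamiltonian}, namely $\partial_t x=\nabla_pH$, $\partial_t p=-\nabla_xH$, $\partial_t\phi_0=-H+p\cdot\nabla_pH$, with $H=|p|^2/2+V(x)$ for Schr\"odinger and $H=\pm c(x)|p|$ for the wave modes. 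Differentiating in $z$ and using $\partial_t J = D_z(\partial_t x)=D_z(\nabla_pH(x,p)) = (\nabla_x\nabla_pH)J + (\nabla_p\nabla_pH)(D_zp)$ and the analogous equation $\partial_t(D_zp) = -(\nabla_x\nabla_xH)J-(\nabla_p\nabla_xH)(D_zp)$, together with $\partial_t\nabla_z\phi_0 = \nabla_z(-H+p\cdot\nabla_pH)$, a direct computation shows all the second-derivative-of-$H$ terms cancel and one is left with $\partial_t w = -(\nabla_x\nabla_pH)^T$-type terms acting on $w$ only — more precisely $\partial_t w = (D_z p)^T\nabla_pH + J^T(-\nabla_xH) - \nabla_z(-H+p\cdot\nabla_pH)$, and expanding $\nabla_z H = J^T\nabla_xH + (D_zp)^T\nabla_pH$ one finds the right-hand side equals a bounded linear function of $w$ (in fact it is $0$ once one simplifies, but even a homogeneous linear ODE suffices). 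By uniqueness for linear ODEs, $w\equiv 0$.

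The main obstacle is purely bookkeeping: carefully organizing the chain rule for the $z$-derivatives of the compositions $H(x(t,z),p(t,z))$ and $\nabla_pH(x(t,z),p(t,z))$ so that the Hessian-of-$H$ contributions visibly cancel, and handling the two Hamiltonians ($H$ smooth for Schr\"odinger, $H=\pm c(x)|p|$ smooth only where $|p|>0$) uniformly — the latter is fine because Section~\ref{OR:sec:GBexist} guarantees $|p^\pm(t,z)|\ge p_{\min}>0$, so $H$ is smooth along the flow. Since this is exactly the classical statement that the map $z\mapsto(x(t,z),p(t,z))$ is a (Lagrangian) canonical relation and $\phi_0$ is its generating function, I expect the computation to go through cleanly; alternatively, one can cite the standard symplectic-geometry fact that the Hamiltonian flow preserves the relation $p\,dx=d\phi_0$ pulled back along the initial Lagrangian submanifold $\{(z,\nabla\varphi_0(z))\}$, which is precisely \eqref{OR:eq:B3}.
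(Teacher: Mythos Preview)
Your approach is essentially identical to the paper's: define $S(t,z)=J(t,z)^Tp(t,z)-\nabla_z\phi_0(t,z)$, check $S(0,z)=0$ from the initial data, and use the Hamiltonian form of the ray equations to show $\partial_t S\equiv 0$. One small slip: in your displayed expression for $\partial_t w$ the first term should be $(D_z\nabla_pH)^Tp$ (coming from $(\partial_tJ)^Tp$), not $(D_zp)^T\nabla_pH$; with that correction the cancellation is exact and you get $\partial_t S=0$ outright, as in the paper, without needing the linear-ODE argument.
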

\begin{proof}
As noted in \rem{hamiltonian}, the first three equations
in \eq{GBODESch} and \eq{GBODEWave} have the Hamiltonian
structure in \eq{GBODEGen}. Let $\phi$ and $H$ represent the phase
and Hamiltonian
for the Schr\"odinger equation or
one of the modes of the wave equation.
Moreover, let
$\phi_0$, $x$ and $p$ be the corresponding
phase, central ray and ray direction. They
are well-defined for all $t\geq 0$
and $z\in \Real^n$
by the discussion in \sect{GBexist}.  They are also real, since
the initial data \eq{GBini} is real and
$H(x,p)$ is real whenever $x$
and $p$ are real.
The first part of (P6) is then proved by noting that
$\phi(t,0,z)=\phi_0(t,z)$ and
$\nabla\phi(t,0,z)=p(t,z)$.
Next, let $J(t,z)=D_z x(t,z)$ and define
$$
S(t,z):=
J(t,z)^T\nabla_y\phi(t,0,z)-\nabla_z\phi(t,0,z)
= J(t,z)^Tp(t,z) - \nabla_z\phi_0(t,z),
$$
which is zero at $t=0$ by \eq{GBini}.
From \eq{GBODEGen}, with $P(t,z)=D_z p(t,z)$, it then follows that
\begin{align*}
\partial_t S
&= (D_z\partial_{t} x)^Tp + J^T\partial_tp - \nabla_z\partial_{t}\phi_0\\
&= (D_{z}\nabla_pH)^Tp - J^T\nabla_y H - \nabla_{z}
(-H+(\nabla_p H)^T p)\\
&= (D_{z}\partial_pH)^Tp - J^T\nabla_y H 
+J^T\nabla_y H+
P^T\nabla_p H
- (D_{z}\nabla_p H)^T p - P^T\nabla_p H
= 0.
\end{align*}
This shows that $S$ is zero for all times, which proves the lemma.
\end{proof}

We will now continue with the proof for the Schr\"odinger case.
Since the wave equation beams are just sums of beams for its
two modes, the proof for the wave equation case will be identical,
and we leave it out.

By \eq{udef} we have for the Schr\"odinger equation
$$
u_{k+p}(t,y)-u_k(t,y) =
    \left(\frac{1}{2\pi\varepsilon}\right)^\frac{n}{2} 
   \int_{K_0} [v_{k+p}(t,y,z)-v_{k}(t,y,z)] \varrho_\eta(y-x(t,z)) dz,
$$
since the same $\eta$ is used for
the $k$-th and the $(k+p)$-th order beams.

Starting from the expressions for $\Phi_{k}$ and $A_{k}$ in
\eq{phidef} and \eqtwo{Adef1}{Adef2} we can analyze the 
differences $v_{k+p}-v_{k}$. 
We obtain
\begin{align}\lbeq{vsplit}
 v_{k+p}-v_k &= A_{k+p}e^{i\Phi_{k+p}/\varepsilon}
 -A_{k}e^{i\Phi_{k}/\varepsilon}\nonumber\\
 &=
 \left(A_{k+p}
 -A_{k}\right)e^{i\Phi_{k+p}/\varepsilon}
 +A_k\left(e^{i\Phi_{k+p}/\varepsilon}
 -e^{i\Phi_{k}/\varepsilon}\right).
\end{align}
By the discussion in \sect{GBexist}
none of $x$, $p$, $\phi_0$, $M$, $\phi_\beta$ or $a_{j,\beta}$
depend on $k$. Therefore,
\begin{align*}
A_{k+p}(t,y,z)-A_k(t,y,z)%  A_{k+p}-A_k 
  &=
  \sum_{j=0}^{\lceil \frac{k}{2} \rceil -1} \varepsilon^j \left[\bar{a}_{j,k+p}(t,y,z)
  -
  \bar{a}_{j,k}(t,y,z)\right]
  \\
  &\mbox{\ \ \ }
  +\sum_{j=\lceil \frac{k}{2} \rceil}^{\lceil (k+p)/2 \rceil -1} \varepsilon^j \bar{a}_{j,k+p}(t,y,z)\\
  &=
  \sum_{j=0}^{\lceil \frac{k}{2} \rceil -1} 
  \sum_{|\beta|=k-2j}^{k+p-2j-1} \frac1{\beta!}a_{j,\beta}(t,z) \varepsilon^jy^\beta
  \\
&  \mbox{\ \ \ }
  +\sum_{j=\lceil \frac{k}{2} \rceil}^{\lceil (k+p)/2 \rceil -1}
  \sum_{|\beta|=0}^{k+p-2j-1} \frac1{\beta!}a_{j,\beta}(t,z) \varepsilon^jy^\beta.
\end{align*}
This is a finite sum of terms having the form $a_{j,\beta}(t,z) \varepsilon^jy^\beta/\beta!$. It can easily be checked that $j+|\beta|/2\geq \frac{k}{2}$
for all terms.
Therefore, for some finite $N_a$, functions $g_j$, multi-indices $\alpha_j$
and powers $\ell_j\geq 0$,
we can write the sum as
$$
  A_{k+p}(t,y,z)-A_k(t,y,z) = \varepsilon^{\frac{k}{2}}\sum_{j=0}^{N_a}  
  \varepsilon^{\ell_j-|\alpha_j|/2}g_{j}(t,z)y^{\alpha_j},
$$
where the $g_j$ functions are equal to scaled amplitude coefficients,
which satisfy \eq{coeffsmooth}
and \eq{acompact}.
Moreover, if $\ell_j=0$ then $|\alpha_j| = k-2j$, so $|\alpha_j|$ then has
the same parity as $k$.
In \eq{vsplit} the amplitudes and phases are
evaluated at $y-x(t,z)$ and
hence, the first term there
contributes to $u_{k+p}-u_k$ as
\be{Aops}
    \left(\frac{1}{2\pi\varepsilon}\right)^\frac{n}{2} 
   \int_{K_0} \left(A_{k+p}-A_{k}\right)e^{i\Phi_{k+p}/\varepsilon}
  \varrho_\eta dz
  = \varepsilon^{\frac{k}{2}}
   \sum_{j=0}^{N_a}\varepsilon^{\ell_j} \Int^{\alpha_j}_{\Phi_{k+p},x,g_j},
\ee
where $|\alpha_j|$ has the same parity as $k$ when $\ell_j=0$.
For this case the $g_j$ functions 
are inde\-pendent of both $y$ and $\varepsilon$,
and by \eq{acompact} they have supp $g_j\subset K_0$.
Therefore, by \eq{coeffsmooth} and \eq{suplipfin}, property (P$4'$) 
%implies (P7), with $w_7=w_4$, 
%$D_7 = \sup_{t\in[0,T]}||g_j(t,\,\cdot\,)||_{L^\infty(K_d)}$
% and (P8), with $w_8=w_4$, $q=\ell=0$,
%$D_8=\sup_{t\in[0,T]}|g_j(t,\,\cdot\,)|_{{\rm Lip}(K_d)}$.
%We conclude that the oscillatory integrals in \eq{Aops} all
%satisfy (P1)--(P4) and (P6)--(P8).
implies (P7) and (P8), with $w_7=w_8=w_4$ and
$$
D_7 = \sup_{t\in[0,T]}||g_j(t,\,\cdot\,)||_{L^\infty(K_d)},\quad
D_8=\sup_{t\in[0,T]}|g_j(t,\,\cdot\,)|_{{\rm Lip}(K_d)},\quad
q=\ell=0.
$$
We conclude that the oscillatory integrals in \eq{Aops} all
satisfy (P1)--(P4) and (P6)--(P8).

We now consider the second term in \eq{vsplit} and define the
function
\be{gtilde}
\tilde{g}(t,y,z,\varepsilon) := \int_0^1
   e^{is(\Phi_{k+p}(t,y,z)-\Phi_k(t,y,z))/\varepsilon} ds. 
\ee
By \eq{coeffsmooth} we have
$\tilde{g}(t,y,z,\varepsilon)\in C^{\infty}
([0,T]\times\Real^{n}\times{K}_d\times \Real^+)$. 
A simple calculation shows that
$$
  e^{i\Phi_{k+p}/\varepsilon}-e^{i\Phi_k/\varepsilon} = 
  \left(e^{i(\Phi_{k+p}-\Phi_k)/\varepsilon}-1\right)e^{i\Phi_k/\varepsilon} = 
  \frac{i}{\varepsilon}\tilde{g}(\Phi_{k+p}-\Phi_{k})
  e^{i\Phi_{k}/\varepsilon}. 
$$
Then we have
\begin{align*}
\lefteqn{ A_k(t,y,z)\left(e^{i\Phi_{k+p}(t,y,z)/\varepsilon}
 -e^{i\Phi_{k}(t,y,z)/\varepsilon}\right)
 }{\hskip 1 cm}\\
 &=
 \frac{i}{\varepsilon}\tilde{g}(t,y,z,\varepsilon)A_k(t,y,z)%\\
% &\ \ \ 
% \times
 \Bigl(
 \Phi_{k+p}(t,y,z)-\Phi_{k}(t,y,z)\Bigr) 
 e^{i\Phi_{k}(t,y,z)/\varepsilon}
 \\
 &=
 \frac{i}{\varepsilon}
 \tilde{g}(t,y,z,\varepsilon)A_k(t,y,z)%\\
% &\ \ \ 
% \times 
 \sum_{|\beta|=k+2}^{k+p+1} \frac1{\beta!}\phi_{\beta}(t,z) y^{\beta}
e^{i\Phi_{k}(t,y,z)/\varepsilon}
% 
% e^{i\bar{\Phi}/\varepsilon}\varrho_\eta
\\
 &=
i\tilde{g}(t,y,z,\varepsilon) e^{i\Phi_{k}(t,y,z)/\varepsilon}
%\\
%&\ \ \ 
%\times
   \sum_{j=0}^{\lceil \frac{k}{2} \rceil -1}\sum_{|\beta_1|=0}^{k-2j-1} 
\sum_{|\beta_2|=k+2}^{k+p+1} 
\frac{\varepsilon^{j-1}}{\beta_1!\beta_2!}a_{j,\beta_1}(t,z)
\phi_{\beta_2}(t,z) y^{\beta_1+\beta_2}.
%
%e^{i\bar{\Phi}/\varepsilon}\varrho_\eta.
\end{align*}
As before, this is a finite sum, now with terms of the form
\be{gtermform}
i\tilde{g}(t,y,z,\varepsilon)
\frac{\varepsilon^{j-1}}{\beta_1!\beta_2!}a_{j,\beta_1}(t,z)
\phi_{\beta_2}(t,z) y^{\beta_1+\beta_2}e^{i\Phi_{k}(t,y,z)/\varepsilon}.
\ee
It is again easy to check that $j-1+|\beta_1+\beta_2|/2\geq k/2$ for all terms.
%Moreover, by \eq{coeffsmooth} and \eq{acompact}
%the product $\tilde{g}(t;y-x(t,z),z,\varepsilon)a_{j,\beta_1}(t,z)
%\phi_{\beta_2}(t,z)/\beta_1!\beta_2!$ belongs to 
%$C^{\infty}({\mathcal K}_d\times \Real^n\times \Real^+)$ and 
%is compactly supported in $K_0$ for fixed $t,y,\varepsilon$.
There are therefore functions $g_{j}$,
multi-indices $\alpha_j$
and powers $\ell_j\geq 0$
such that for
some finite $N_q$,
$$
 A_k\left(e^{i\Phi_{k+p}/\varepsilon}
 -e^{i\Phi_{k}/\varepsilon}\right)
 = \varepsilon^{\frac{k}{2}}\sum_{j=0}^{N_q}\varepsilon^{\ell_j-|\alpha_j|/2}g_j(t,y,z,\varepsilon)(y-x(t,z))^{\alpha_j}e^{i\Phi_{k}(t,y-x(t,z),z)/\varepsilon},
$$
where $|\alpha_j| = k-2j+2$ 
if $\ell_j=0$, 
so, again, 
$|\alpha_j|$ then has the same parity as $k$.
Hence, the second term in \eq{vsplit}
contributes to $u_{k+p}-u_k$ as
\be{Phiops}
    \left(\frac{1}{2\pi\varepsilon}\right)^\frac{n}{2} 
   \int_{K_0} 
    A_k\left(e^{i\Phi_{k+p}/\varepsilon}
 -e^{i\Phi_{k}/\varepsilon}\right)
 \varrho_\eta dz
 = 
\varepsilon^{\frac{k}{2}} \sum_{j=0}^{N_q}\varepsilon^{\ell_j} \Int^{\alpha_j}
_{\Phi_k,x,g_j},
\ee
where, as before,
$\Phi_{k}$ and $x$ have properties (P1)--(P4) and (P6).
%Therefore by \lem{GBphase} 
%the conditions (B1)-(B3) hold.

We have left to prove that $\Phi_k$, $x$, and $g_j$ have properties 
(P7) and (P8). 
By \eq{gtermform}, \eq{coeffsmooth} and \eq{acompact}, each $g_j$ is of the form
$f_j(t,z)\tilde{g}(t,y-x(t,z),z,\varepsilon)$
where $f_j(t,z)\in C^{\infty}({\mathcal K}_d)$ and supp$ f_j(t,\,\cdot\,)\subset K_0$ for $t\in[0,T]$. Hence,
$g_j(t,y,z,\varepsilon)\in C^{\infty}
([0,T]\times\Real^{n}\times{K}_d\times \Real^+)$, with compact support in $K_0$ for fixed $t,y,\varepsilon$.

To show \eq{B4} and \eq{B5},
%Moreover,
%$g_j(t,y,z,\varepsilon)\in C^{\infty}({\mathcal K}_d\times \Real^n\times \Real^+)$, with compact support in $K_0$ for fixed $t,y,\varepsilon$, and
%each $g_j$ is of the form $f_j(t,z)\tilde{g}(t;y-x(t,z),z,\varepsilon)$
%where $f_j(t,z)\in C^{\infty}({\mathcal K}_d)$ and supp$ f_j(t,\,\cdot\,)\subset K_0$ for
%$t\in[0,T]$.
%
%
%
%We start by noting that
we note first that
since both the phases $\Phi_k$, $\Phi_{k+p}$ satisfy (P$4'$), we have
for any $s\in [0,1]$, $(t,z)\in{\mathcal K}_d$, $y\in\Real^n$ and $\varepsilon>0$,
\begin{align}\lbeq{phimean}
  \left| e^{i[s\Phi_{k+p}(t,y,z)+(1-s)\Phi_k(t,y,z)]/\varepsilon}
  \varrho_\eta(y)  
  \right|
  &=
   e^{-s\Im\Phi_{k+p}(t,{y},z)-(1-s)\Im\Phi_k(t,{y},z)]/\varepsilon}\varrho_\eta(y)   \nonumber\\
&\leq   
   e^{-sw_{4,k+p}|{y}|^2/\varepsilon-(1-s)w_{4,k}|{y}|^2/\varepsilon}\nonumber\\
   &\leq e^{-\tilde{w}_4|y|^2},
   \end{align}
   where $w_{4,\ell}$ is the constant in (P$4'$) for $\Phi_\ell$ and
$\tilde{w}_4=\min(w_{4,k+p},w_{4,k})$.
To simplify the presentation in the remainder of the proof,  
we let $\tilde{y}=y_0-x(t,z)$ and drop the index $j$ from $g_j$ and $f_j$.
Then by \eq{phimean} and \eq{suplipfin},
\begin{align*}
\lefteqn{
\left|g(t,y_0,z,\varepsilon)e^{i\Phi_k(t,y_0-x(t,z),z)/\varepsilon}
\varrho_\eta(y_0-x(t,z))  
\right|}\hskip 20 mm &\\
    &=\left|f(t,z)\tilde{g}(t,\tilde{y},z,\varepsilon)e^{i\Phi_k(t,\tilde{y},z)/\varepsilon}
    \varrho_\eta(\tilde{y})  \right|\\
    &=\left|f(t,z)
    \int_0^1
   e^{i[s\Phi_{k+p}(t,\tilde{y},z)+(1-s)\Phi_k(t,\tilde{y},z)]/\varepsilon}     \varrho_\eta(\tilde{y})ds \right|
%    \\    &    
    \leq 
    %\sup_{t\in[0,T]}||f(t,\,\cdot\,)||_{L^\infty(K_d)}
  C_1 e^{-{\tilde w}_4|\tilde{y}|^2/\varepsilon},
\end{align*}
for all $(t,z)\in {\mathcal K}_d$.
 This shows \eq{B4} and therefore (P7) with $D_7=C_1$ and $w_7=\tilde{w}_4$.
 
 Finally, for \eq{B5} we use the fact that 
 $\Phi_k(t,0,z)=\Phi_{k+p}(t,0,z)=\phi_0(t,z)$,
 which means that $\tilde{g}(t,0,z,\varepsilon)=1$.
We can therefore split
\begin{align*}
\lefteqn{
\Bigl(g(t,y_0,z,\varepsilon)-g(t,y_0,z_0,\varepsilon)\Bigr)
    e^{i\Phi_k(t,y_0-x(t,z),z)/\varepsilon}    \varrho_\eta(y_0-x(t,z))} \hskip 5 mm &\\
    &= 
\Bigl(f(t,z)\tilde{g}(t,\tilde{y},z,\varepsilon)-f(t,z_0)\tilde{g}(t,0,z_0,\varepsilon)\Bigr)
    e^{i\Phi_k(t,\tilde{y},z)/\varepsilon}    \varrho_\eta(\tilde{y}) \\
    &= 
f(t,z)\Bigl(\tilde{g}(t,\tilde{y},z,\varepsilon)-1\Bigr)
    e^{i\Phi_k(t,\tilde{y},z)/\varepsilon}    \varrho_\eta(\tilde{y})%\\
%    &\ \ \ \ \ \ \ 
    +
\Bigl(f(t,z)-f(t,z_0)\Bigr)
    e^{i\Phi_k(t,\tilde{y},z)/\varepsilon}    \varrho_\eta(\tilde{y}).
\end{align*}
Since $f$ is smooth, 
$t\in[0,T]$ and $z,z_0\in K_d$,
it follows from \eq{suplipfin}
and (P$4'$) that the second term can be estimated as
\be{fterm1}
\left|(f(t,z)-f(t,z_0))
    e^{i\Phi_k(t,\tilde{y},z)/\varepsilon}    \varrho_\eta(\tilde{y})\right|
    \leq C_2|z-z_0|e^{-w_{4,k}|\tilde{y}|^2/\varepsilon}.
%    ,\qquad
%    C_1 =\sup_{t\in[0,T]}|f(t,\,\cdot\,)|_{{\rm Lip}(K_d)}.
\ee
For the first term we consider
\begin{align*}
\lefteqn{
\Bigl(\tilde{g}(t,\tilde{y},z,\varepsilon)-1\Bigr)
    e^{i\Phi_k(t,\tilde{y},z)/\varepsilon}
    }\hskip 1 cm
&    \\
&=    \int_0^1\left(
   e^{is(\Phi_{k+p}(t,\tilde{y},z)-\Phi_k(t,\tilde{y},z))/\varepsilon} -1
   \right)ds\times
e^{i\Phi_k(t,\tilde{y},z)/\varepsilon}
    \\
&=  \frac{i}{\varepsilon} (\Phi_{k+p}(t,\tilde{y},z)-\Phi_k(t,\tilde{y},z))
 \int_0^1\int_0^1s
   e^{i(sr\Phi_{k+p}(t,\tilde{y},z)+(1-sr)\Phi_k(t,\tilde{y},z))/\varepsilon}dsdr.\end{align*}
Hence, upon again using \eq{phimean}, \eq{coeffsmooth} and \eq{suplipfin},
\begin{align*}
\left|f(t,z)\Bigl(\tilde{g}(t,\tilde{y},z,\varepsilon)-1\Bigr)
    e^{i\Phi_k(t,\tilde{y},z)/\varepsilon}
    \varrho_\eta(\tilde{y})\right|
    &\leq
  \frac{C_1}{\varepsilon} |\Phi_{k+p}(t,\tilde{y},z)-\Phi_k(t,\tilde{y},z)|
  e^{-\tilde{w}_4|\tilde{y}|^2/\varepsilon}
  \\
  &\leq 
  \frac{C_1}{\varepsilon} 
  \sum_{|\beta|=k+2}^{k+p+1} \frac1{\beta!}|\phi_{\beta,\ell}(t,z)| |\tilde{y}|^{|\beta|}
  e^{-\tilde{w}_4 |\tilde{y}|^2/\varepsilon}\\
  &\leq 
  \frac{C_1'}{\varepsilon} 
  |\tilde{y}|^{k+2}
  e^{-\tilde{w}_4 |\tilde{y}|^2/\varepsilon}
\leq  \frac{C_3}{\varepsilon} 
|z-z_0|^{k+2}
  e^{-\tilde{w}_4 |\tilde{y}|^2/\varepsilon},
\end{align*}
where we also used the fact that by \eq{suplipfin},
$$
|\tilde{y}|=|x(t,z_0)-x(t,z)|\leq C |z-z_0|,
$$
whenever $t\in[0,T]$ and $z,z_0\in K_d$.
Together with \eq{fterm1} we 
thus get 
an estimate of the type \eq{B5} with
$D_8=\max(C_1,C_2,C_3)$, 
$w_8=\tilde{w}_4$,
$q=k+1$ and $\ell=1$, which satisfy $q\geq 2\ell$
as $k\geq 1$. 
This completes the proof of \theo{DiffRep}.

%%%%%%%%%%%%%%%%%%%%%%%%%%%%%%%%%%%%%%%%%%%%%%%%%%%%%%%%%%%%%%%%%%%%%%%%%%%%%%%%%%%

\subsection{Proof of \theo{Iestimate}}\lbsec{IestimateSec}

%%%%%%%%%%%%%%%%%%%%%%%%%%%%%%%%%%%%%%%%%%%%%%%%%%%%%%%%%%%%%%%%%%%%%%%%%%%%%%%%%%%

We henceforth consider a 
fixed $\delta>0$
and start by proving the 
two most simple cases in the theorem: when
$(t,y)$ is either outside
the essential support of the solution,
$(t,y)\in {\mathcal D}_{x,\delta}^c$, 
or close to a caustic point,
$(t,y)\in {\mathcal C}_{x,\delta}$.
We next consider the most difficult case, when
$(t,y)\in {\mathcal C}^c_{x,\delta}$. In particular,
showing the extra $\varepsilon^{1/2}$ factor when $|\alpha|$ is odd,
requires careful estimates.
To avoid breaking the flow of the arguments we move
most of the various lemmas'
proofs to \appen{proofs}. 

\subsubsection{Cases
$(t,y)\in {\mathcal D}_{x,\delta}^c$
and  
$(t,y)\in {\mathcal C}_{x,\delta}$}

For both these cases we make
use of the following integral estimate.
\begin{lemma}\lblem{intest1}
Let $U\subset \Real^n$ be a bounded measurable set.
Suppose $|y-x(t,z)|\geq a\geq 0$ when $z\in U$ for a fixed $t\in[0,T]$. If $b\geq 0$ and $c>0$ then
\be{invest}
  \int_{U}
|y-x(t,z)|^be^{-c|y-x(t,z)|^2/\varepsilon} dz \leq
C|U|\varepsilon^{b/2}e^{-ca^2/2\varepsilon},
\ee
where $C$ only depends on $b$ and $c$; it is
independent of $a$, $(t,y)\in[0,T]\times \Real^n$ and $\varepsilon>0$.
\end{lemma}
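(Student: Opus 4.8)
The plan is to reduce the estimate to a pointwise bound on the integrand that is independent of $z$, and then integrate the resulting constant over $U$. Write $r=r(z):=|y-x(t,z)|$ for the (fixed) $t$ in the statement, so that on $U$ we have $r\geq a$, and the integrand is $r^b e^{-cr^2/\varepsilon}$. The key idea is to split the Gaussian into two halves,
$$
  r^b e^{-cr^2/\varepsilon} = \left(r^b e^{-cr^2/(2\varepsilon)}\right)\, e^{-cr^2/(2\varepsilon)},
$$
and to estimate the two factors separately: the first by an $\varepsilon$-power coming from elementary calculus, the second by the hypothesis $r\geq a$.

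For the first factor I would use the elementary fact, already invoked in the proof of \lem{cutoffdiff}, that for fixed $b\geq 0$ and $c>0$ the function $s\mapsto s^b e^{-cs^2/(2\varepsilon)}$ on $[0,\infty)$ attains its maximum at $s_\star=\sqrt{b\varepsilon/c}$ (or is identically $1$ when $b=0$), so that
$$
  \sup_{s\geq 0} s^b e^{-cs^2/(2\varepsilon)} = \left(\frac{b\varepsilon}{c}\right)^{b/2} e^{-b/2} =: C_b\,\varepsilon^{b/2},
$$
with $C_b$ depending only on $b$ and $c$. Hence $r^b e^{-cr^2/(2\varepsilon)}\leq C_b\varepsilon^{b/2}$ for every $z$. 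For the second factor, since $r=|y-x(t,z)|\geq a$ on $U$ and the exponential is decreasing, $e^{-cr^2/(2\varepsilon)}\leq e^{-ca^2/(2\varepsilon)}$. Multiplying the two bounds gives, uniformly in $z\in U$,
$$
  |y-x(t,z)|^b e^{-c|y-x(t,z)|^2/\varepsilon} \leq C_b\,\varepsilon^{b/2}\, e^{-ca^2/(2\varepsilon)}.
$$

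Finally I would integrate this constant bound over $U$, which has finite measure $|U|$ since $U$ is bounded and measurable, to obtain
$$
  \int_U |y-x(t,z)|^b e^{-c|y-x(t,z)|^2/\varepsilon}\,dz \leq C_b\,|U|\,\varepsilon^{b/2}\,e^{-ca^2/(2\varepsilon)},
$$
which is \eq{invest} with $C=C_b$. There is really no serious obstacle here; the only point deserving a word is that the constant $C_b$ obtained from the maximization depends neither on $\varepsilon$ nor on $a$ (nor on $(t,y)$), which is exactly what the explicit value of the maximum gives, so the claimed uniformity holds.
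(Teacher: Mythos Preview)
Your proof is correct and is essentially identical to the paper's: both split $e^{-cr^2/\varepsilon}=e^{-cr^2/(2\varepsilon)}e^{-cr^2/(2\varepsilon)}$, bound one factor by $e^{-ca^2/(2\varepsilon)}$ using $r\geq a$, and bound $r^b e^{-cr^2/(2\varepsilon)}$ by its global maximum $(b\varepsilon/c)^{b/2}e^{-b/2}$ (the paper phrases the latter via $x^p e^{-x}\leq (p/e)^p$, which is the same computation). The paper separates out the trivial case $b=0$ with $C=1$, which you handle parenthetically.
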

\begin{proof}
When $b=0$ the result is obviously true for $C=1$.
When $b>0$ we use the fact that $x^pe^{-x}\leq (p/e)^p$ for $p>0$ and $x\geq 0$.
Then
\begin{align*}
  \int_{U}
|y-x(t,z)|^be^{-c|y-x(t,z)|^2/\varepsilon} dz &\leq
  \int_{U}
|y-x(t,z)|^be^{-c|y-x(t,z)|^2/2\varepsilon}e^{-ca^2/2\varepsilon} dz
\nonumber
\\ &\leq 
\left(\frac{\varepsilon b}{c}\right)^{b/2}e^{-b/2}
e^{-ca^2/2\varepsilon}\int_Udz.
\end{align*}
This shows the lemma with $C=
\left(b/c\right)^{b/2}e^{-b/2}$.
\end{proof}

We now first suppose that $(t,y)\in {\mathcal D}_{x,\delta}^c$.
If $z\in K_0$,
then by definition
 \begin{align*}
 |y-x(t,z)| >\delta.
 \end{align*}
Therefore, by (P7) and \lem{intest1}, with $b=|\alpha|$, $c=w_7$ and $a=\delta$,
\begin{align*}
\left|\Int^{\alpha}_{\Phi,x,g}(t,y)\right|
 & \leq \varepsilon^{-\frac{n+|\alpha|}{2}}
\int_{K_0} \left|g(t,y,z,\varepsilon)(y-x(t,z))^\alpha e^{i\Phi(t,y-x(t,z),z)/\varepsilon}\varrho_\eta(y-x(t,z))\right|dz
\nonumber\\
& \leq
D_7 \varepsilon^{-\frac{n+|\alpha|}{2}}
\int_{K_0}
|y-x(t,z)|^{|\alpha|}
e^{-w_7|y-x(t,z)|^2/\varepsilon}dz
\nonumber\\
& \leq
D_7C |K_0|\varepsilon^{-\frac{n}{2}}
e^{-w_7\delta^2/2\varepsilon}
%& \leq
%C \varepsilon^{-\frac{n}{2}}
%e^{-c\delta^2/8\varepsilon}
%\int_{K}dz
\leq C'e^{-{w}/\varepsilon},
\end{align*}
for ${w}<w_7\delta^2/2$, which proves the case $(t,y)\in {\mathcal D}_{x,\delta}^c$
since $D_7$ and $C$ are uniform constants in $t$ and $y$.

Second, suppose $(t,y)\in {\mathcal C}_{x,\delta}$.
%\medskip
%\noindent{\bf Case 2:} $(t,y)\in {\mathcal C}_{x,\delta}$\\
%\noindent{\bf Case $(t,y)\in {\mathcal C}_{x,\delta}$}\\
Here, we simply use \lem{intest1} 
with $a=0$. This does not give an optimal estimate,
but slightly better than \eq{sobolevpessimistic}.
Hence, by (P7) and \lem{intest1} as above, with $b=|\alpha|$, $c=w_7$ and $a=0$,
\begin{align*}
\left|\Int^{\alpha}_{\Phi,x,g}(t,y)\right|
% & \leq \varepsilon^{-\frac{n+|\alpha|}{2}}
%\int_{K_0} \left|g(t,y,z,\varepsilon)(y-x(t,z))^\alpha e^{i\Phi(t,y-x(t,z),z)/\varepsilon}\varrho_\eta(y-x(t,z))\right|dz
%\nonumber\\
& \leq
D_7 \varepsilon^{-\frac{n+|\alpha|}{2}}
\int_{K_0}
|y-x(t,z)|^{|\alpha|}
e^{-w_7|y-x(t,z)|^2/\varepsilon}dz
\nonumber\\
& \leq
D_7C |K_0|\varepsilon^{-\frac{n}{2}}
\leq C' \varepsilon^{-\frac{n}{2}},
\end{align*}
where again $C'$ is independent of $(t,y)\in[0,T]\times\Real^n$.
This proves the theorem when $(t,y)\in {\mathcal C}_{x,\delta}$.

%%%%%%%%%%%%%%%%%%%%%%%%%%%%%%%%%%%%%%%%%%%%%%%%%%%%%%%%%%%%%%%%%%%%%%%%%%%%%

\subsubsection{Case $(t,y)\in {\mathcal C}^c_{x,\delta}$}
%\medskip
%\noindent{\bf Case 3:} $(t,y)\in {\mathcal C}^c_\delta$\\

This is the most complicated case, in particular when $|\alpha|$ is odd.
The key idea of the proof is that
the ray function $x(t,z)$ is locally invertible in $z$
on the set ${\mathcal C}^c_{x,\delta}$. We derive this
property from a
uniform version of the inverse function theorem; see \theo{invfunc} below.
In order to carefully track the constants in the estimates,
and verify that they are independent of $(t,y)\in{\mathcal C}^c_{x,\delta}$,
we  define the following, finite, numbers
\be{R1def}
   R_1 = \mathop{\sup_{t\in [0,T]}}_{z\in{\rm conv}(K_d)} |J(t,z)|,
   \qquad
   R_2 = \sum_{j=1}^n\mathop{\sup_{t\in [0,T]}}_{z\in{\rm conv}(K_d)}  \left|D^2_zx_j(t,z)\right|,
\ee
where conv$(K)$ represents the convex hull of $K$ and $x=(x_1,\ldots,x_n)^T$.
This means that whenever $z,z'\in K_d$ and $t\in[0,T]$,
 \begin{align}
    |x(t,z)-x(t,z')|&\leq R_1|z-z'|,\lbeq{xdiff}\\
    |J(t,z)-J(t,z')|&\leq R_2|z-z'|,\lbeq{Jdiff}\\
    |x(t,z)-x(t,z')-J(t,z')(z-z')|&\leq \frac12 R_2|z-z'|^2.\lbeq{xdiff2}
 \end{align}
We also define the extended mapping $X:{\mathcal K}_d \mapsto [0,T]\times \Real^n$
as
$$
   X(t,z) = (t, x(t,z)),
$$
%and the upper bound on the Jacobian's inverse,
%\be{Rm1def}
%   R_{-1} = \sup_{(t,z)\in {\mathcal K}_d\setminus X^{-1}({\mathcal C}_{x,\delta/4})} |J^{-1}(t,z)|.
%\ee
%Finally, 
and we let ${\mathcal B}_r(z)$ be the open ball of radius $r$ centered at $z$.
We then have the following theorem for the ray function $x(t,z)$.

\begin{theorem}[Uniform Inverse Function Theorem]\lbtheo{invfunc}
Suppose $d'\in (0,d)$ and $\delta'>0$. 
Then there are numbers $R_{-1}$, $\rho>0$ and $0<r\leq d-d'$ such that, for all $(t,z_0)\in
{\mathcal K}_{d'}\setminus X^{-1}({\mathcal C}_{x,\delta'})$,
\begin{itemize}
\item $\bar{\mathcal B}_r(z_0)\subset K_d$,
\item 
$x(t,\,\cdot\,)$ restricted to ${\mathcal B}_r(z_0)$ is a diffeomorphism
on its image ${\mathcal V}_r(t,z_0):=x(t,{\mathcal B}_r(z_0))$,
%\item 
%${\mathcal V}_r(t,z_0)$ is open and if $y_0=x(t,z_0)$,
%$$
%  {\mathcal B}_{\rho}(y_0)\subset {\mathcal V}_r(t,z_0),
%%  \{t\}\times {\mathcal V}_r(t,z_0)\subset {\mathcal C}_{\delta/4}^c,
%$$
\item 
${\mathcal V}_r(t,z_0)$ is open; if $y_0=x(t,z_0)$, then
$
  {\mathcal B}_{\rho}(y_0)\subset {\mathcal V}_r(t,z_0)$, and
\item the inverse of the Jacobian $J(t,z)$ is bounded on ${\mathcal B}_r(z_0)$,
$$
   \sup_{z\in {\mathcal B}_{r}(z_0)}|J^{-1}(t,z)|\leq R_{-1}.
$$
%\item For all $z,z'\in {\mathcal B}_{r}(z_0)$,
%$$
%   c_0|z-z'|\leq |x(t_0,z)-x(t_0,z')|\leq c_1 |z-z'|,
%$$
\end{itemize}
Note that $R_{-1}$, $r$ and $\rho$ are uniform in $(t,z_0)$ but in general
depend on $d'$ and $\delta'$. See \eq{Rm1def}, \eq{rdef} and \eq{rhodef} for their
precise definitions.
\end{theorem}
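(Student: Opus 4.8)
The plan is to reduce the entire statement to one quantitative ingredient — a uniform, positive lower bound on $|\det J(t,z)|$ on small balls around the admissible base points $(t,z_0)$ — and then run the quantitative inverse function theorem with all constants tracked. The key observation is that $(t,z_0)\in{\mathcal K}_{d'}\setminus X^{-1}({\mathcal C}_{x,\delta'})$ is precisely the statement ${\rm dist}\bigl((t,x(t,z_0)),{\mathcal C}_x\bigr)\ge\delta'$. Since ${\mathcal C}_{x,\delta'/2}$ is open and $X$ is continuous, the set ${\mathcal U}:={\mathcal K}_d\setminus X^{-1}({\mathcal C}_{x,\delta'/2})$ is closed in the compact set ${\mathcal K}_d$, hence compact, and on it $\det J$ cannot vanish: if $(t,z)\in{\mathcal U}$ had $\det J(t,z)=0$ then, because $(t,z)\in{\mathcal K}_d$, the definition of ${\mathcal C}_x$ would force $(t,x(t,z))\in{\mathcal C}_x\subset{\mathcal C}_{x,\delta'/2}$, contradicting $(t,z)\notin X^{-1}({\mathcal C}_{x,\delta'/2})$. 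By continuity and compactness, $\mu:=\inf_{\mathcal U}|\det J|>0$ (if ${\mathcal U}=\emptyset$ the set in the theorem is empty too, and there is nothing to prove). Using \eq{xdiff}, if $r\le\delta'/(2R_1)$ and $z\in{\mathcal B}_r(z_0)$ then $|x(t,z)-x(t,z_0)|\le R_1 r\le\delta'/2$, so ${\rm dist}\bigl((t,x(t,z)),{\mathcal C}_x\bigr)>\delta'/2$; moreover $z\in K_d$ as soon as $r\le d-d'$, since then ${\rm dist}(z,K_0)\le d'+r\le d$. Hence $(t,z)\in{\mathcal U}$ and $|\det J(t,z)|\ge\mu$ for every $z\in{\mathcal B}_r(z_0)$. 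Combined with $|J(t,z)|\le R_1$ from \eq{R1def} and the adjugate formula, this gives $|J^{-1}(t,z)|\le R_{-1}:=C_nR_1^{n-1}/\mu$ on ${\mathcal B}_r(z_0)$, with $C_n$ a dimensional constant; this is the last bullet.

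With $R_{-1}$ in hand I would fix the radius by $r=\min\{\,d-d',\ \delta'/(2R_1),\ (2R_{-1}R_2)^{-1}\,\}$ (omitting the last entry if $R_2=0$). The first bullet $\bar{\mathcal B}_r(z_0)\subset K_d$ then follows from the distance estimate above. For injectivity, if $z,z'\in{\mathcal B}_r(z_0)$ and $x(t,z)=x(t,z')$, I would write $x(t,z)-x(t,z')=\int_0^1 J\bigl(t,z'+s(z-z')\bigr)(z-z')\,ds$, subtract $J(t,z_0)(z-z')$, apply $J(t,z_0)^{-1}$, and use \eq{Jdiff} together with convexity of the ball to obtain $|z-z'|\le R_{-1}R_2 r\,|z-z'|\le\tfrac12|z-z'|$, forcing $z=z'$. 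Since $J(t,\cdot)$ is nonsingular on ${\mathcal B}_r(z_0)$ and $x(t,\cdot)\in C^\infty$, the classical inverse function theorem makes $x(t,\cdot)$ an open map there, so ${\mathcal V}_r(t,z_0)$ is open and $x(t,\cdot)\colon{\mathcal B}_r(z_0)\to{\mathcal V}_r(t,z_0)$ is a diffeomorphism; that settles the second bullet and the openness part of the third.

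The remaining point, which I expect to require the most care, is producing a single $\rho>0$, uniform in $(t,z_0)$, with ${\mathcal B}_\rho(y_0)\subset{\mathcal V}_r(t,z_0)$ where $y_0=x(t,z_0)$. For this I would use a Newton-type fixed point: given $y$ with $|y-y_0|<\rho$, set $g(z)=z+J(t,z_0)^{-1}\bigl(y-x(t,z)\bigr)$, whose fixed points solve $x(t,z)=y$. Writing $g(z)-z_0=J(t,z_0)^{-1}\bigl[(y-y_0)-\bigl(x(t,z)-x(t,z_0)-J(t,z_0)(z-z_0)\bigr)\bigr]$ and bounding the inner remainder by $\tfrac12 R_2|z-z_0|^2$ via \eq{Jdiff} (equivalently \eq{xdiff2}), one checks that for $\rho$ a suitable fixed multiple of $r/R_{-1}$ the map $g$ carries $\bar{\mathcal B}_{r/2}(z_0)$ into itself, and by the same computation as for injectivity it is a $\tfrac12$-contraction there. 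Banach's fixed point theorem then yields a unique $z^*\in{\mathcal B}_r(z_0)$ with $x(t,z^*)=y$, so ${\mathcal B}_\rho(y_0)\subset{\mathcal V}_r(t,z_0)$. Since $\mu$, $R_{-1}$, $r$, $\rho$ constructed this way depend only on $d'$, $\delta'$, $R_1$, $R_2$ and the dimension — never on the particular $(t,z_0)$ — the uniformity claimed in the theorem is obtained. The genuine difficulty here is bookkeeping: one must check that all the smallness requirements placed on $r$ (staying inside $K_d$, keeping $|\det J|\ge\mu$, and making both the injectivity and the contraction estimates hold) are mutually compatible, and that no constant has smuggled in a hidden dependence on $(t,z_0)$.
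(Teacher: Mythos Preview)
Your proof is correct and follows essentially the same route as the paper: the same auxiliary compact set ${\mathcal U}={\mathcal K}_d\setminus X^{-1}({\mathcal C}_{x,\delta'/2})$ (called ${\mathcal K}''$ in the paper), the same choice $r=\min\{d-d',\ (2R_{-1}R_2)^{-1},\ \delta'/(2R_1)\}$, the same Newton map $z\mapsto z+J(t,z_0)^{-1}(y-x(t,z))$ shown to be a $\tfrac12$-contraction, and the same $\rho=r/(2R_{-1})$. The only difference is cosmetic: the paper defines $R_{-1}$ directly as $\sup_{{\mathcal K}''}|J^{-1}|$ (finite by continuity on a compact set where $J$ is nonsingular), whereas you pass through $\mu=\inf_{{\mathcal U}}|\det J|$ and the adjugate formula, which is a slightly longer path to the same bound.
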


This result follows essentially in the same way as the
standard inverse function theorem. For completeness,
a proof is given in \appen{incfuncproof}. 

We let $\{z_j\}$ be the set of all solutions in $K_{d/2}$ to 
the equation $y=x(t,z)$.
Since $(t,y)\in {\mathcal C}_{x,\delta}^c \subset {\mathcal C}_{x,\delta/2}^c$
all points $(t,z_j)$ belong to ${\mathcal K}_{d/2}\setminus X^{-1}(
{\mathcal C}_{x,\delta/2})$. This set will be used extensively, and we
introduce the shorthand notation
$$
\bar{\mathcal K}:={\mathcal K}_{d/2}\setminus X^{-1}(
{\mathcal C}_{x,\delta/2}).
$$
We then apply \theo{invfunc}  with 
the parameters $d'=d/2$ and $\delta'=\delta/2$, and,
henceforth, we let $R_{-1}$, $r$ and $\rho$
be as given by the theorem with these parameters.
They then satisfy
\be{thisr}
0<r\leq d/2, \qquad R_{-1}, \rho>0.
\ee
We stress that the four bullet points in the theorem are then valid with
these numbers for {\em all}
$(t,z_0)\in \bar{\mathcal K}$.

In the remainder of the proof we will make use of
a few consequences of \theo{invfunc} 
which we collect in a corollary.

\begin{corollary}\label{uitcoroll}
The number of solutions $\{z_j\}$ in $K_{d/2}$ is bounded
by a number $M_\delta<\infty$, independently of $(t,y)\in {\mathcal C}_{x,\delta}^c$. The balls
$\{{\mathcal B}_{r/2}(z_j)\}$ are all disjoint.
%the number of such solutions is finite.
Moreover, if 
$(t,z_0)\in \bar{\mathcal K}$ and
$x(t,z), x(t,z')\in {\mathcal B}_\rho(x(t,z_0))$, then
\be{zdiff}
|z-z'|\leq R_{-1}|x(t,z')-x(t,z)|.
\ee
\end{corollary}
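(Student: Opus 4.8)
The plan is to read off all three assertions from the uniform inverse function theorem, \theo{invfunc}, applied with the parameters $d'=d/2$ and $\delta'=\delta/2$. This is legitimate precisely because, as noted just above, any solution $z_j\in K_{d/2}$ of $y=x(t,z)$ satisfies $(t,z_j)\in\bar{\mathcal K}={\mathcal K}_{d/2}\setminus X^{-1}({\mathcal C}_{x,\delta/2})$ whenever $(t,y)\in{\mathcal C}_{x,\delta}^c\subset{\mathcal C}_{x,\delta/2}^c$, since $X(t,z_j)=(t,y)\notin{\mathcal C}_{x,\delta/2}$. From now on $r$, $\rho$, $R_{-1}$ denote the numbers furnished by \theo{invfunc} for these parameters; the key point is that they are uniform over all $(t,z_0)\in\bar{\mathcal K}$, with $0<r\le d-d'=d/2$, so that all four conclusions of the theorem hold simultaneously, with this single choice of $r$, $\rho$, $R_{-1}$, at every $z_j$.

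For the disjointness of $\{{\mathcal B}_{r/2}(z_j)\}$, I would suppose $z_i\ne z_j$ and that the two balls intersected. Then $|z_i-z_j|<r$, so $z_j\in{\mathcal B}_r(z_i)$, and $x(t,z_i)=y=x(t,z_j)$ contradicts the injectivity of $x(t,\,\cdot\,)$ on ${\mathcal B}_r(z_i)$ guaranteed by \theo{invfunc}. Finiteness of $\{z_j\}$ then follows by a volume count: since $z_j\in K_{d/2}$ and $r/2\le d/4$, each ${\mathcal B}_{r/2}(z_j)$ lies in the fixed compact set $K_{3d/4}$, and these balls are pairwise disjoint with common volume $(r/2)^n|{\mathcal B}_1(0)|$; hence their number is at most $M_\delta:=|K_{3d/4}|\,\big((r/2)^n|{\mathcal B}_1(0)|\big)^{-1}<\infty$. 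Because $r$ depends only on $d$ and $\delta$, this $M_\delta$ is independent of $(t,y)\in{\mathcal C}_{x,\delta}^c$, as required.

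It remains to prove \eq{zdiff}, for $z,z'\in{\mathcal B}_r(z_0)$ (the setting in which it is applied); I would argue exactly as in the classical proof that a local inverse of $x(t,\,\cdot\,)$ is Lipschitz. By \theo{invfunc}, $x(t,\,\cdot\,)$ restricted to ${\mathcal B}_r(z_0)$ is a diffeomorphism onto ${\mathcal V}_r(t,z_0)\supset{\mathcal B}_\rho(y_0)$, with $y_0=x(t,z_0)$; writing $\psi$ for its inverse we have $\psi(w)\in{\mathcal B}_r(z_0)$ and the derivative of $\psi$ at $w$ equals $J(t,\psi(w))^{-1}$, whose norm is at most $R_{-1}$ by the last bullet of \theo{invfunc}. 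Setting $w=x(t,z)$ and $w'=x(t,z')$, the hypothesis puts $w,w'$ in the convex ball ${\mathcal B}_\rho(y_0)\subset{\mathcal V}_r(t,z_0)$, so the segment joining them also lies in ${\mathcal V}_r(t,z_0)$; then $z-z'=\psi(w)-\psi(w')=\int_0^1 J\big(t,\psi(w'+s(w-w'))\big)^{-1}(w-w')\,ds$, and the uniform bound on $J^{-1}$ gives $|z-z'|\le R_{-1}|w-w'|$, which is \eq{zdiff}.

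The argument is short because the substantive work was done in \theo{invfunc}. The one point I would watch carefully — and essentially the only potential obstacle — is uniformity: a single triple $(r,\rho,R_{-1})$ must serve every $(t,z_0)\in\bar{\mathcal K}$, hence every solution $z_j$ at once, which is exactly what \theo{invfunc} supplies. Beyond that, no difficulty arises.
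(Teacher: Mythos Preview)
Your proof is correct and follows essentially the same approach as the paper's own proof: disjointness via the injectivity in \theo{invfunc}, a volume count for the finiteness bound, and the Lipschitz estimate for the local inverse to get \eq{zdiff}. The only cosmetic differences are that the paper contains the balls ${\mathcal B}_{r/2}(z_j)$ in $K_d$ directly via the first bullet of \theo{invfunc} (rather than your $K_{3d/4}$), and it writes the Lipschitz step as a sup bound on $|D_x m|$ instead of your integral representation; neither changes anything substantive.
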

%\begin{corollary}
%Suppose $z_0\neq z_1$, $x(t,z_0)=x(t,z_1)=y$ and
%$(t,z_0),(t,z_1)\in {\mathcal K}_\delta$. Then
%\be{disjoint}
%   |z_0-z_1|\geq r.
%\ee
%There are only a finite number of solutions $z_j$ to $y=x(t,z)$
%with an upper bound independent of $(t,y)\in {\mathcal D}_\delta$.
%\end{corollary}
\begin{proof}
%Fix $Y=(t,y)\in [0,T]\times {\Real^n}$ and
%let $r$ be as in \theo{invfunc}.
%Denote by $Z_j=(t,z_j)$ the solutions of $X(Z)=Y$ 
%in ${\mathcal K}$. 
If the number of solutions $\{z_j\}$ is more than one,
suppose
$|z_j-z_k|<r$ for some indices $j,k$. Then $z_j\in {\mathcal B}_r(z_k)$ 
and $x(t,z_j)=x(t,z_k)$
so $x(t,z)$ is not one-to-one on ${\mathcal B}_r(z_k)$. 
This contradicts the second point of \theo{invfunc}.
Hence, $|z_j-z_j|\geq r$ for all $j\neq k$
and the balls $\{{\mathcal B}_{r/2}(z_j)\}$ are disjoint.
Moreover, by the first point in \theo{invfunc},
each disjoint ball ${\mathcal B}_{r/2}(z_j)$ is a subset of
$K_d$ and
their total volume is therefore bounded by the volume of $K_d$.
The number of solutions must hence be finite, say $M$, and
$$
  |K_d|\geq \sum_{j=1}^M |{\mathcal B}_{r/2}(z_j)|= M\omega_n (r/2)^n
  \quad
  \Rightarrow\quad
%  R^n\geq 
%  M\left(\frac{r}{2}\right)^n
%  \quad
%  \Rightarrow\quad
  M\leq M_\delta =\frac{|K_d|2^n}{\omega_nr^n},
\qquad
\omega_n = \frac{\pi^{n/2}}{\Gamma(n/2+1)},
$$
where $\omega_n$ is the volume of the unit $n$-sphere. This shows the first
statement since
$M_\delta$ only depends on
$K_d$, $r$ and $n$.
For \eq{zdiff} we note that
by \theo{invfunc} there is a smooth inverse $m(t,x)$ satisfying
$m(t,x(t,z))=z$ for all $z\in {\mathcal B}_r(z_0)$. Let $y_0=x(t,z_0)$. Then
\begin{align*}
  |z-z'| &= |m(t,x(t,z))-m(t,x(t,z'))|
  \leq \sup_{y\in {\mathcal B}_\rho(y_0)}\left|D_x m(t,y)\right||x(t,z)-x(t,z')| \\
  &\leq  \sup_{q\in {\mathcal B}_r(z_0)}\left|J^{-1}(t,q)\right||x(t,z)-x(t,z')| 
  \leq R_{-1}|x(t,z)-x(t,z')|.
\end{align*}
For the last inequality we used the fourth point in \theo{invfunc}.
This shows the corollary.
\end{proof}

Hence, by Corollary \ref{uitcoroll}
the number of solutions $M$ to $y=x(t,z)$ in $K_{d/2}$ is finite.
We 
%For our, fixed, $(t,y)$ we 
define the set $S\subset K_0$ as the points 
away from these solutions $\{z_j\}$,% to $y=x(t,z)$ in $K_{d/2}$ introduced above. 
%Hence, if $M$ is the number of such solutions,
$$
  S = 
  \begin{cases} K_0, & M=0,\\
  K_0\setminus \bigcup_{j=1}^{M} {\mathcal B}_{r/2}(z_j),& M\geq 0.
  \end{cases}
$$
Since 
$\{{\mathcal B}_{r/2}(z_j)\}$ are disjoint by
Corollary \ref{uitcoroll} we can then
%
% $M$ is
%finite and
%$M\leq M_\delta<\infty$ for all $(t,y)$.
%%We thus let
%%
%%
%%By Corollary \ref{uitcoroll}, we have
%%$M\leq M_\delta<\infty$ for all $(t,y)$.
%Moreover, the balls
%% and
%%. In this case,
%%
%%the number of solutions $M(t,y)$ in ${K}_{d/2}$, is finite,
%%bounded by $M_\delta$ independent of $Y$, and the balls $\{{\mathcal B}_{r/2}(z_j)\}$
%%are all disjoint.
%%
%%
%%
%%\verify{Should comment somewhere why we can change
%%$K_d$ to $K_0$ everywhere in the oscillatory integrals.}
%%
%%
%%
%%If there are no solutions, $M(t,y)=0$, we set $S=K_0$. Otherwise
%%we let
%%$$
%%  S = K_0\setminus \bigcup_{j=1}^M {\mathcal B}_{r/2}(z_j),
%%$$
%%with $M\leq M_\delta$. In this case,
%%since 
%$\{{\mathcal B}_{r/2}(z_j)\}$ are disjoint, and
%we can therefore
split the integral as %(\verify{note that $g$ is compactly supported on $K$})
\begin{align*}
%\lefteqn{\Int^{\alpha}_{\Phi,x,g}(t,y)}{\hskip 1 cm} &
%\nonumber\\
\Int^{\alpha}_{\Phi,x,g}(t,y) & = \varepsilon^{-\frac{n+|\alpha|}{2}}
\int_{K_0} g(t,y,z,\varepsilon)(y-x(t,z))^\alpha e^{i\Phi(t,y-x(t,z),z)/\varepsilon}\varrho_\eta(y-x(t,z))dz
\nonumber\\
& =
\int_{S}\cdots\ dz 
+\sum_{j=1}^M \int_{{\mathcal B}_{r/2}(z_j)\cap K_0} \cdots\ dz\\
&=
\int_{S}\cdots\ dz 
+\sum_{j=1}^M \int_{{\mathcal B}_{r/2}(z_j)} \cdots\ dz =: I_S + \sum_{j=1}^M I_{B_j}.
\end{align*}
Here we also used the fact from (P7)
that $g(t,y,\,\cdot\,\,,\varepsilon)$ is compactly supported in $K_0$.
We will show below that there are positive constants $w_s$, $C_S$ and $C_B$ that are independent
of $(t,y)\in
{\mathcal C}^c_{x,\delta}$ 
and $\varepsilon\in(0,1]$ such that
\be{isibest}
  |I_S|\leq C_S e^{-w_s/\varepsilon},\qquad
  |I_{B_j}|\leq C_B\begin{cases}
  1, & \text{$|\alpha|$ is even},\\
  \sqrt\varepsilon, &\text{$|\alpha|$ is odd}.
  \end{cases}
\ee
From Corollary \ref{uitcoroll} we have that
$M$ is bounded by $M_\delta$ uniformly in $(t,y)$. We therefore get the
desired estimate,
\begin{align*}
\left|\Int^{\alpha}_{\Phi,x,g}(t,y)\right|&\leq |I_S|
 + M_\delta \max_{j}|I_{B_j}|\leq 
 C_S e^{-w_s/\varepsilon}
+ M_\delta C_B
\begin{cases}
  1, & \text{$|\alpha|$ is even},\\
  \sqrt\varepsilon, &\text{$|\alpha|$ is odd},
  \end{cases}
  \\
  &\leq C\begin{cases}
  1, & \text{$|\alpha|$ is even},\\
  \sqrt\varepsilon, &\text{$|\alpha|$ is odd},
  \end{cases}
\end{align*}
for all $(t,y)\in
{\mathcal C}^c_{x,\delta}$ and $\varepsilon\in(0,1]$.

%\verify{Stress in text above that this is for the case $(t,y)\in$\ldots}.

We now turn to proving \eq{isibest}. It will be done in three steps, one
for each case.

\medskip
\noindent{\bf Estimate of $I_S$.}\\
%\subsubsection*{Estimate of $I_S$}
For this estimate we show that when $z\in S$ then 
$$|y-x(t,z)|\geq 
\bar{\rho} :=\min(\rho,r/2R_{-1},\delta/2).$$
Suppose first that $X(t,z)\not\in {\mathcal C}_{x,\delta/2}$. 
This implies that
$(t,z) \in \bar{\mathcal K}$ and \theo{invfunc} applies.
Assume $|y-x(t,z)|<\bar{\rho}\leq \rho$. 
Then $y\in {\mathcal B}_\rho(x(t,z))$
and by \theo{invfunc} there is a $z'\in {\mathcal B}_r(z)$ such that $y=x(t,z')$.
Since $z\in S\subset K_0$ and $r\leq d/2$
by \eq{thisr}, we have
$z'\in K_{d/2}$, so that $z'\in \{z_j\}$ and $M>0$. Hence,
%
%If $M>0$ and  we have that  and $|z-z'|\geq r/2$ since 
%
%If $M>0$ and $z'\in K_{d/2}$ we have that $z'\in \{z_j\}$ and $|z-z'|\geq r/2$ since $z\in S$. On the other hand, if $M=0$ or
%$z'\in K_{d/2}^c$ we have $|z-z'|>d/2\geq r>r/2$.
%%
%%Thus $|z'-z|<r$ which shows that $z'$ cannot be one of the solutions $\{z_j\}$
%%since $z\in S$ and therefore $|z-z_j|\geq r/2$.
%%
%%Since $|z'-z|<r$ we have by our assumption that $z'\neq z_j$ for all $j$.
%%Hence, since $z'$ is a solution, it must lie outside the set
%%$K_{d/2}$ and therefore $|z-z'|>d/2$.
by \eq{zdiff}, and the fact that $z\in S$,
$$
  \frac{r}{2}\leq |z-z'|\leq R_{-1}|x(t,z)-x(t,z')|<R_{-1}\bar{\rho}\leq \frac{r}{2},
$$
a contradiction. So $|y-x(t,z)|\geq \bar{\rho}$
if 
$X(t,z)\not\in {\mathcal C}_{x,\delta/2}$. 
%$(t,z) \in {\mathcal K}_0\setminus X^{-1}({\mathcal C}_{\delta/2})$.

Suppose instead that $X(t,z)\in{\mathcal C}_{x,\delta/2}$.
Then
$$
 |x(t,z)-y|
 ={\rm dist}(X(t,z),(t,y))
 %=|X(t,z)-(t,y)|
 \geq {\rm dist}((t,y),{\mathcal C}_x)-{\rm dist}(X(t,z),{\mathcal C}_x)
 \geq \delta -\delta/2=\delta/2\geq\bar{\rho},
$$
since $(t,y)\in {\mathcal C}_{x,\delta}^c$.
%
%Hence, $z'=z_j$ for some $j$ and $|z'-z|<r$.
%
%Let $m(t,y)$ be the inverse of $x(t,\,\cdot\,)$ on ${\mathcal B}_r(z)$.
%
%In this case we have that
%$$
%Y\in {\mathcal D}_{3\delta/4,\delta/4}.
%$$
%
%\begin{lemma}
%Suppose
%$Y\in {\mathcal D}_{\delta,\rho}$ and $Z\in {\mathcal K}$.
%Then, if $|Z-Z_j|\geq r$, $\forall j$,
%$$
%|X(Z)-Y|\geq \varrho.
%$$
%for some $r,\varrho$. {\bf $Z_j\in $????}
%\end{lemma}
%\begin{proof}
%By Corollary \ref{uitcoroll} there is a $\varrho>0$ such that $Y\in X({\mathcal K}_{3\delta/4,\varrho})\subset {\mathcal C}_{3\delta/8}^c$.
%Let $Z_j$ be all solutions in ${\mathcal K}_{3\delta/4,\varrho}$ to $X(Z)=Y$.
%Suppose $Z'\in {\mathcal K}$ and
%$|Y-X(Z')|\leq \varrho$. Then $Y'=X(Z')\in {\mathcal D}_{3\delta/4,\delta/4+\varrho}
%\cap {\mathcal D}\subset {\mathcal D}_{3\delta/4-\delta/4-\varrho}$.
%
%Moreover, we have that
%$Y\in {\mathcal B}_\varrho(Y')\subset X({\mathcal B}_\delta(Z'))$ by uniform inv func th.
%Hence, some $Z_j\in {\mathcal B}_\delta(Z')$, (WHY?) a contradiction.
%Therefore, if $|Z'-Z_j|\geq r$, $\forall j$,
%$$
%|X(Z')-Y|\geq \varrho.
%$$
%\end{proof}
We have thus shown that if $z\in S$, then 
$|y-x(t,z)|\geq\bar{\rho}$. Therefore, by (P7)
and \lem{intest1}, with $C$ and $D_7$ independent of $(t,y)$ and $\varepsilon>0$,
\begin{align*}
|I_S| &=\left| \varepsilon^{-\frac{n+|\alpha|}{2}}
\int_{S} g(t,y,z,\varepsilon)(y-x(t,z))^\alpha e^{i\Phi(t,y-x(t,z),z)/\varepsilon}\varrho_\eta(y-x(t,z))dz
\right| \\
&\leq
D_7 \varepsilon^{-\frac{n+|\alpha|}{2}}
\int_{S} |y-x(t,z)|^{|\alpha|} e^{-w_7|y-x(t,z)|^2/\varepsilon}dz\\
&\leq
D_7C \varepsilon^{-\frac{n}{2}}|S|e^{-w_7\bar{\rho}^2/2\varepsilon}
\leq C_S e^{-w_s/\varepsilon},
\end{align*}
for $w_s<w_7\bar{\rho}^2/2$.
Here we also used the fact that $|S|\leq |K_0|<\infty$.
This shows the first inequality in \eq{isibest}.

\medskip
\noindent{\bf Estimate of $I_{B_j}$.}\\
The integrals $I_{B_j}$ are all of the form
\begin{align*}
I_B(t,z_0)=\varepsilon^{-\frac{n+|\alpha|}{2}}
\int_{{\mathcal B}_{\frac{r}{2}}(z_0)} g(t,y_0,z,\varepsilon)(y_0-x(t,z))^\alpha e^{i\Phi(t,y_0-x(t,z),z)/\varepsilon}\varrho_\eta(y_0-x(t,z))dz
\end{align*}
where 
$(t,z_0)\in \bar{\mathcal K}$,
$y_0=x(t,z_0)$ and the number
 $r$ is determined from \theo{invfunc}.
 It follows in particular that ${\mathcal B}_{r/2}(z_0)\subset K_d$ so
 that the estimates in properties (P$4'$), (P7) and (P8) can be used.
We now need to bound $I_{B}(t,z_0)$
with constants independent of $(t,z_0)\in\bar{\mathcal K}$
and $\varepsilon\in(0,1]$.
For this we use the following lemma. 
%which 
%holds for our values of $r$ and $\rho$,
\begin{lemma}\lblem{intest2}
%Make the same assumptions as in \theo{invfunc} and use the same notation.
%Let $y_0=x(t,z_0)$.%
Suppose $r$ is given as above and
$y_0=x(t,z_0)$.
If $a,b\geq 0$ and $c>0$ there is a constant $C$ 
such that for all $(t,z_0)\in \bar{\mathcal K}$ and $\varepsilon>0$,
\be{intest2e2}
  \int_{{\mathcal B}_{\frac{r}{2}}(z_0)}
  |z-z_0|^a|y_0-x(t,z)|^be^{-c|y_0-x(t,z)|^2/\varepsilon} dz \leq C\varepsilon^{\frac{n+a+b}{2}}.
\ee
%where $C$ is independent of $t$, $y_0$, $z_0$ and $\varepsilon$.
%\verify{(It only depends on $a,b,c,n,r,\rho$.)}
\end{lemma}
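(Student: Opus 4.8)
The plan is to reduce the integral to a standard Gaussian integral by changing variables $w = x(t,z)-y_0$. By \theo{invfunc} — applied, as in the preceding discussion, with $d'=d/2$ and $\delta'=\delta/2$ — the map $x(t,\,\cdot\,)$ is a diffeomorphism on $\mathcal B_r(z_0)\supset\mathcal B_{r/2}(z_0)$ onto its image, with Jacobian inverse satisfying $|\det J(t,z)^{-1}|\le R_{-1}^n$ there. The one nontrivial point is the factor $|z-z_0|^a$: to recover the full power $\varepsilon^{(n+a+b)/2}$ (rather than only $\varepsilon^{(n+b)/2}$) we must replace $|z-z_0|$ by a constant times $|x(t,z)-y_0|=|w|$. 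The Lipschitz bound \eq{zdiff} of Corollary~\ref{uitcoroll} gives precisely $|z-z_0|\le R_{-1}|x(t,z)-y_0|$, but only as long as $x(t,z)$ stays in the ball $\mathcal B_{\rho}(y_0)$; this forces a splitting of the domain.

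Concretely, I would write $\mathcal B_{r/2}(z_0)=A_1\cup A_2$ with $A_1=\{z\in\mathcal B_{r/2}(z_0):|y_0-x(t,z)|<\rho\}$ and $A_2=\mathcal B_{r/2}(z_0)\setminus A_1$. On $A_1$ we have $x(t,z),\,x(t,z_0)=y_0\in\mathcal B_\rho(y_0)$, so \eq{zdiff} applies and the integrand is bounded by $R_{-1}^{\,a}|y_0-x(t,z)|^{a+b}e^{-c|y_0-x(t,z)|^2/\varepsilon}$; changing variables to $w=x(t,z)-y_0$, picking up a Jacobian factor $\le R_{-1}^n$, and enlarging the domain of integration to all of $\Real^n$, the $A_1$-contribution is at most $R_{-1}^{\,n+a}\int_{\Real^n}|w|^{a+b}e^{-c|w|^2/\varepsilon}dw$, which by the rescaling $w=\sqrt\varepsilon\,\zeta$ equals $C_1\varepsilon^{(n+a+b)/2}$ with $C_1=R_{-1}^{\,n+a}\int_{\Real^n}|\zeta|^{a+b}e^{-c|\zeta|^2}d\zeta<\infty$. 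On $A_2$ I would use the crude estimates $|z-z_0|\le r/2$, $|y_0-x(t,z)|\le R_1\,r/2$ (from \eq{xdiff}) and $e^{-c|y_0-x(t,z)|^2/\varepsilon}\le e^{-c\rho^2/\varepsilon}$, together with $|A_2|\le\omega_n(r/2)^n$, to obtain $\int_{A_2}\le C_2\,e^{-c\rho^2/\varepsilon}$. Since $\varepsilon\mapsto e^{-c\rho^2/\varepsilon}\varepsilon^{-(n+a+b)/2}$ is continuous and bounded on $(0,\infty)$, this is $\le C_2'\varepsilon^{(n+a+b)/2}$ for every $\varepsilon>0$, and summing the two pieces gives the lemma with $C=C_1+C_2'$.

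It remains only to check that all constants are uniform in $(t,z_0)\in\bar{\mathcal K}$, which they are: $r$, $\rho$, $R_{-1}$ are uniform by \theo{invfunc}, $R_1$ is the uniform bound from \eq{R1def}, and $\omega_n,c,a,b$ are fixed. The main obstacle — and the only place where something beyond elementary integral bounds is needed — is the extraction of the $\varepsilon^{a/2}$ factor, i.e. the uniform Lipschitz control $|z-z_0|\lesssim|x(t,z)-y_0|$ on the diffeomorphism ball; everything else is the routine computation of a Gaussian integral and of an exponentially small tail.
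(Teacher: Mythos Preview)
Your proposal is correct and essentially identical to the paper's proof. Both split the ball according to whether $|y_0-x(t,z)|<\rho$, use the Lipschitz bound \eq{zdiff} from Corollary~\ref{uitcoroll} on the near piece together with the change of variables $w=x(t,z)-y_0$ (picking up a Jacobian factor $\le R_{-1}^n$), and treat the far piece as exponentially small; the only cosmetic difference is that the paper handles the far piece via \lem{intest1} whereas you bound $|y_0-x(t,z)|^b\le(R_1 r/2)^b$ directly.
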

The proof is given in \appen{intest2proof}.
%In what follows we recall that $r,\rho$ are given by
%\theo{invfunc} for $d'=d/2$ and $\delta'=\delta/2$. This means
%in particular that we can use \lem{intest2} and \lem{PhiApprox}
%with these parameters.

\medskip
\noindent{\bf Case when $|\alpha|$ even.}\\
For $|\alpha|$ even we directly apply (P7)
and \lem{intest2} to $I_B$
with $a=0$, $b=|\alpha|$ and $c=w_7$ to get
\begin{align*}
|I_B(t,z_0)|&\leq D_7\varepsilon^{-\frac{n+|\alpha|}{2}}
\int_{{\mathcal B}_{\frac{r}{2}}(z_0)} |y_0-x(t,z)|^{|\alpha|} e^{-w_7|y_0-x(t,z)|^2/\varepsilon}dz
\leq C'_B,
%\qquad \forall (t,z_0)\in \bar{\mathcal K},\ \varepsilon>0,
\end{align*}
for all $(t,z_0)\in \bar{\mathcal K}$ and $\varepsilon>0$.
This shows the first half of the second estimate in \eq{isibest}.

%\subsubsection*{Case when $|\alpha|$ odd}
\medskip
\noindent{\bf Case when $|\alpha|$ odd.}\\
In this case we can gain
an additional factor 
of $\varepsilon^{1/2}$ if we make a careful estimate.
To do this, we approximate the phase $\Phi$ by
its leading order Taylor expansion in $z$ and show that the 
integral using the approximate $\Phi$
gives negligible contribution to the integral. The
following lemma details the phase approximation. It is proved
in \appen{PhiApproxproof}.

\begin{lemma}\lblem{PhiApprox}
Suppose $r$ is given as above and
$y_0=x(t,z_0)$.
%
%Suppose $t,z_0,y_0,r$ and $B$ are given as above.
%
%Make the same assumptions as in \theo{invfunc} and use the same notation.
%Let $y_0=x(t,z_0)$.
% where $(t,z_0)\in{\mathcal K}_{d'}\setminus X^{-1}({\mathcal C}_{\delta'})$.
If the phase $\Phi(t,y,z)$ and central ray $x(t,z)$ 
have properties (P1)--(P4) and (P6), then there is a bound $R_3$ such that
for all 
$(t,z_0)\in \bar{\mathcal K}$
and $z\in {\mathcal B}_{r/2}(z_0)$,
%\verify{for $z,z_0\in K_d$ and $t\in$? <-- NO, not at caustics for instance!}
$$
  \left|\Phi(t,y_0-x(t,z),z) -
  \left(\Phi(t,0,z_0) + \frac12(z-z_0)^TA(t, z_0)(z-z_0)\right)\right| \leq R_3|z-z_0|^3,
$$
where $A(t,z_0)\in \Cnumbers^{n\times n}$.
The imaginary part of $A$ is
symmetric positive definite, and there exists $w_a>0$ such that
for all $(t,z_0)\in \bar{\mathcal K}$,
\be{Atilde}
%|A(t,z_0)|\leq R_4,\qquad 
\Im {A}(t,z_0)\geq w_a I.
\ee
%The constants are uniform, i.e.
%$R_3$ and
% $w_a$ are independent of 
% $(t,z_0)\in {\mathcal K}_{d/2}\setminus X^{-1}(
%{\mathcal C}_{\delta/2})$.
\end{lemma}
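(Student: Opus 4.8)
The plan is to read off $A(t,z_0)$ and the remainder bound $R_3$ from a second-order Taylor expansion in $z$, around $z=z_0$, of the function
\[
   F(t,z) := \Phi\big(t,\ y_0-x(t,z),\ z\big),\qquad y_0=x(t,z_0),
\]
and then to establish~\eq{Atilde} by inspecting the imaginary part of its Hessian at $z_0$. First I would note that for $z\in{\mathcal B}_{r/2}(z_0)$ the whole segment from $z_0$ to $z$ lies in the ball ${\mathcal B}_{r/2}(z_0)\subset K_d$ (first bullet of \theo{invfunc}), so by (P1)--(P2) the map $z\mapsto F(t,z)$ is $C^\infty$ there and Taylor's theorem with integral remainder applies. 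The constant term is $F(t,z_0)=\Phi(t,0,z_0)$; the first-order term vanishes, since by the chain rule $\nabla_zF(t,z_0)=-J(t,z_0)^T\nabla_y\Phi(t,0,z_0)+\nabla_z\Phi(t,0,z_0)$, which is exactly zero by property (P6). Hence the expansion has the asserted shape with $A(t,z_0):=D_z^2F(t,z_0)$, a symmetric matrix since it is the Hessian of a (complex-valued) $C^\infty$ scalar function. For the cubic remainder, the third $z$-derivatives of $F$ are, by the chain rule, finite sums of products of derivatives of $\Phi$ of order $\leq 3$ evaluated at $(t,y_0-x(t,z),z)$ with derivatives of $x$ of order $\leq 3$ at $(t,z)$; along the segment one has $t\in[0,T]$, $z\in K_d$ and $|y_0-x(t,z)|=|x(t,z_0)-x(t,z)|\leq R_1r/2$ by~\eq{xdiff}, so all these derivatives are evaluated over a single fixed compact set on which, by (P1)--(P2), they are bounded uniformly in $(t,z_0)\in\bar{\mathcal K}$; this produces $R_3$.

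The heart of the argument is~\eq{Atilde}. Since by (P6) the maps $z\mapsto\Phi(t,0,z)$ and $z\mapsto\nabla_y\Phi(t,0,z)$ are real-valued, every $z$-derivative of $\Im\Phi(t,0,z)$ and of $\nabla_y\Im\Phi(t,0,z)$ vanishes identically; so differentiating $z\mapsto\Im\Phi(t,y_0-x(t,z),z)$ twice and evaluating at $z=z_0$ (where $y_0-x(t,z_0)=0$), every contribution drops out except the pure $y$-Hessian term, and
\[
   \Im A(t,z_0)=J(t,z_0)^T\,\Im\big(D_y^2\Phi(t,0,z_0)\big)\,J(t,z_0).
\]
To bound $\Im D_y^2\Phi(t,0,z_0)$ from below I would expand $y\mapsto\Im\Phi(t,y,z_0)$ at $y=0$: its value and gradient there are zero (again because $\Phi(t,0,z_0)$ and $\nabla_y\Phi(t,0,z_0)$ are real), so $\Im\Phi(t,y,z_0)=\tfrac12 y^T\Im D_y^2\Phi(t,0,z_0)\,y+R(y)$ with $|R(y)|\leq C|y|^3$ and $C$ uniform over $\bar{\mathcal K}$ by (P2) and compactness; comparing with the lower bound $\Im\Phi(t,y,z_0)\geq w_4|y|^2$ supplied by (P4) and rescaling the quadratic form gives $\Im D_y^2\Phi(t,0,z_0)\geq w_4I$. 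Finally, the fourth bullet of \theo{invfunc} gives $|J(t,z_0)^{-1}|\leq R_{-1}$ for $(t,z_0)\in\bar{\mathcal K}$, hence $J(t,z_0)^TJ(t,z_0)\geq R_{-1}^{-2}I$, so $\Im A(t,z_0)\geq w_4\,J(t,z_0)^TJ(t,z_0)\geq w_4R_{-1}^{-2}I$ and~\eq{Atilde} holds with $w_a=w_4/R_{-1}^2$, uniformly in $(t,z_0)\in\bar{\mathcal K}$.

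The step I expect to be the main obstacle is the bookkeeping: organizing the chain-rule expansion of $D_z^2F$ cleanly enough to see that every term of $\Im A$ other than $J^T(\Im D_y^2\Phi)J$ cancels by the reality statements in (P6), and checking that the constants $C$, $R_3$ and $w_a$ are genuinely independent of $(t,z_0)$, which in each case reduces either to a supremum of smooth data over a fixed compact set, or to the uniform bound on $J^{-1}$ furnished by \theo{invfunc}. Once that is in hand, positive definiteness of $\Im A$ drops out directly from (P4) together with the uniform invertibility of $J$ on $\bar{\mathcal K}$.
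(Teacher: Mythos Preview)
Your proposal is correct and follows essentially the same approach as the paper: Taylor-expand $F(t,z)=\Phi(t,y_0-x(t,z),z)$ to second order at $z_0$, kill the linear term with (P6), identify $\Im A=J^T(\Im D_y^2\Phi)J$ via the reality statements in (P6), bound $\Im D_y^2\Phi$ below using (P4), and convert this into a lower bound for $\Im A$ via the uniform bound on $J^{-1}$ from \theo{invfunc}. The only cosmetic difference is that the paper keeps track of the factor $1/2$ in the quadratic term to obtain $\Im D_y^2\Phi\geq 2w_4 I$ and hence $w_a=2w_4/R_{-1}^2$, whereas your stated bound $w_a=w_4/R_{-1}^2$ is simply a slightly weaker (but perfectly valid) constant.
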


We thus start by approximating $\Phi\approx \tilde{\Phi}$ and $I_B\approx\tilde{I}_B$ on ${\mathcal B}_{r/2}(z_0)$, where
$$
  \tilde\Phi(t,z,z_0) := \Phi(t,0,z_0)+\frac12 (z-z_0)^TA(t,z_0)(z-z_0),
$$ 
with $A(t,z_0)$ as in \lem{PhiApprox}, and 
$$
\tilde{I}_B(t,z_0):= 
\varepsilon^{-\frac{n+|\alpha|}{2}}
\int_{{\mathcal B}_{\frac{r}{2}}(z_0)} g(t,y_0,z_0,\varepsilon)(J(t,z_0)(z_0-z))^\alpha e^{i\tilde{\Phi}(t,z,z_0)/\varepsilon}\varrho_\eta(y_0-x(t,z))dz.
$$
We will now show that $\tilde{I}_B$ is exponentially small in $\varepsilon$.
To do this
we use the following lemma describing cancellations
occurring in integrals over odd mononomials multiplied by a Gaussian.
\begin{lemma}\lblem{ICancel}
Let $\alpha$ be an $n$-dimensional multi-index such that $|\alpha|$ is odd. 
For $A,R\in\Cnumbers^{n\times n}$ and any $r>0$,
$$
  \int_{{\mathcal B}_r(z_0)} (R(z-z_0))^\alpha e^{(z-z_0)^TA(z-z_0)}dz = 0.
$$
\end{lemma}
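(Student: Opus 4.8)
The plan is to prove this by an elementary parity argument, exploiting the symmetry of the ball under the reflection through its center. First I would translate the integration variable, setting $w = z - z_0$, so that
\[
  \int_{{\mathcal B}_r(z_0)} (R(z-z_0))^\alpha e^{(z-z_0)^TA(z-z_0)}dz
  = \int_{{\mathcal B}_r(0)} (Rw)^\alpha e^{w^TAw}\,dw,
\]
where ${\mathcal B}_r(0)$ is the open ball of radius $r$ centered at the origin. Since ${\mathcal B}_r(0)$ is bounded and the integrand is continuous, this integral is well-defined (absolutely convergent); this is the only regularity point that needs checking, and no assumption on $A$ or $R$ is required because the domain is bounded.

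Next I would apply the change of variables $w \mapsto -w$, which maps ${\mathcal B}_r(0)$ onto itself and has Jacobian of absolute value one. Under this substitution the quadratic form is unchanged, $(-w)^TA(-w) = w^TAw$, while the polynomial factor transforms as
\[
  (R(-w))^\alpha = (-1)^{|\alpha|}(Rw)^\alpha = -(Rw)^\alpha,
\]
because $(Rw)^\alpha = \prod_{i=1}^n \bigl(\sum_{j=1}^n R_{ij}w_j\bigr)^{\alpha_i}$ is homogeneous in $w$ of degree $|\alpha|$, which is odd by hypothesis. Hence the integral equals its own negative, and therefore vanishes.

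I do not anticipate any genuine obstacle here: the hypothesis that $|\alpha|$ is odd is used exactly once, to produce the sign $(-1)^{|\alpha|}=-1$, and the argument is purely one of symmetry of the domain together with homogeneity of the polynomial and evenness of the Gaussian factor. In particular, neither invertibility of $R$ nor positive-definiteness of $\Im A$ plays any role in this lemma; those properties enter only later, when the lemma is combined with \lem{PhiApprox} to show that $\tilde{I}_B$ is exponentially small in $\varepsilon$.
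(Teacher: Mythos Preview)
Your proof is correct and follows essentially the same symmetry argument as the paper: translate to the origin and exploit the oddness of the integrand under $w\mapsto -w$. The only cosmetic difference is that the paper expands $(Rz)^\alpha$ into monomials before applying the reflection, whereas you invoke homogeneity of degree $|\alpha|$ directly; your version is slightly cleaner but the idea is identical.
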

The proof of the lemma is given in \appen{ICancelproof}.
It shows that the $\tilde{I}_B$ integral, without $\varrho_\eta$, vanishes,
since
\begin{align*}
\lefteqn{\int_{{\mathcal B}_{\frac{r}{2}}(z_0)} g(t,y_0,z_0,\varepsilon)(J(t,z_0)(z_0-z))^\alpha e^{i\tilde{\Phi}(t,z,z_0)/\varepsilon}dz}\hskip 3 mm &\\
&=
e^{i\Phi(t,0,z_0)/\varepsilon}
g(t,y_0,z_0,\varepsilon)
\int_{{\mathcal B}_{\frac{r}{2}}(z_0)} 
(J(t,z_0)(z_0-z))^\alpha e^{\frac{i}{2}(z-z_0)^TA(t,z_0)(z-z_0)/\varepsilon}dz=0.
\end{align*}
Therefore,
\begin{align*}
\tilde{I}_B(t,z_0)&= \varepsilon^{-\frac{n+|\alpha|}{2}}
e^{i\Phi(t,0,z_0)/\varepsilon}
g(t,y_0,z_0,\varepsilon)\\
&\ \ \ \times
\int_{{\mathcal B}_{\frac{r}{2}}(z_0)} 
(J(t,z_0)(z_0-z))^\alpha e^{\frac{i}{2}(z-z_0)^TA(t,z_0)(z-z_0)/\varepsilon}
(\varrho_\eta(y_0-x(t,z))-1)
dz.
\end{align*}
Moreover, 
$\varrho_\eta(y-x)-1$ is identically zero for $|y-x|\leq \eta$,
and since $|y_0-x(t,z)|=|x(t,z_0)-x(t,z)|\leq R_1|z-z_0|$ when $z\in {\mathcal B}_r(z_0)$, 
we have 
by the positive definiteness of $\Im A$ given in \lem{PhiApprox},
\begin{align*}
\lefteqn{\left|\int_{{\mathcal B}_{\frac{r}{2}}(z_0)} 
(J(t,z_0)(z_0-z))^\alpha e^{\frac{i}{2}(z-z_0)^TA(t,z_0)(z-z_0)/\varepsilon}(\varrho_\eta(y_0-x(t,z))-1)dz\right|}\hskip 1 cm &\\
&\leq \int_{{\mathcal B}_{\frac{r}{2}}(z_0)} 
|J(t,z_0)|^{|\alpha|}|z_0-z|^{|\alpha|} e^{-\frac{1}{2}(z-z_0)^T\Im A(t,z_0)(z-z_0)/\varepsilon}|\varrho_\eta(y_0-x(t,z))-1|dz\\
&\leq 
\left(\frac{R_1r}{2}\right)^{|\alpha|}\int_{{\mathcal B}_{\frac{r}{2}}(z_0)} 
e^{-\frac{w_a}{2} |z-z_0|^2/\varepsilon}|\varrho_\eta(y_0-x(t,z))-1|dz\\
&\leq 
\left(\frac{R_1r}{2}\right)^{|\alpha|}
\left|{\mathcal B}_{\frac{r}{2}}(z_0)\right|
 e^{-w_a \eta^2/R_1^22\varepsilon}.
\end{align*}
%\verify{It should be $J(z_0)(z-z_0)$ also in the exponential here.}
%where we also used \lem{intest1} and the fact that
%
%\verify{Where $\delta$ is as in \lem{}.}
%
%
%that $|y_0-x(t,z)|\geq \eta$,
%implies $|z-z_0|\geq \eta/R_1$ for $z\in B$.
%Therefore, by also using \eq{Atilde} in \lem{PhiApprox}
%\begin{align*}
%&\int_{{\mathcal B}_{\frac{r}{2}}(z_0)} 
%(J(t,z_0)(z_0-z))^\alpha e^{\frac{i}{2}(z_0-z)^TA(t,z_0)(z_0-z)/\varepsilon}(\varrho_\eta(y_0-x(t,z))-1)dz\\
%&\leq 
%C\int_{{\mathcal B}_{\frac{r}{2}}(z_0)} 
%|z_0-z|^{|\alpha|} e^{-\delta |z_0-z|^2/\varepsilon}|\varrho_\eta(y_0-x(t,z))-1|dz\\
%&\leq 
%C\int_{{\mathcal B}_{\frac{r}{2}}(z_0)} 
%|z_0-z|^{|\alpha|} e^{-\delta \eta^2/C^2\varepsilon}|dz\leq C\varepsilon^s.
%\end{align*}
%for any $s>0$. 
Since $\Phi(t,0,z_0)$ is real by (P6), then by (P7),
noting that $y_0-x(t,z_0)=0$,
\be{gbounded}
  |g(t,y_0,z_0,\varepsilon)|=
 % |g(t,y_0,z_0,\varepsilon)e^{i\Phi(t,0,z_0)/\varepsilon}|=
  |g(t,y_0,z_0,\varepsilon)e^{i\Phi(t,0,z_0)/\varepsilon}
  \varrho_\eta(0)|\leq D_7,
\ee
where $D_7$ is clearly uniform in $(t,z_0)$.
Hence, there are constants
$\tilde{C}_B$ and $\tilde{w}$ such that
for all $(t,z_0)\in \bar{\mathcal K}$ and $\varepsilon>0$,
\be{Itest}
  |\tilde{I}_B(t,z_0)|\leq 
  \varepsilon^{-\frac{n+|\alpha|}{2}}D_7
  \left(\frac{R_1r}{2}\right)^{|\alpha|}
  \left|{\mathcal B}_{\frac{r}{2}}(z_0)\right|\, e^{-w_a \eta^2/R_1^22\varepsilon}
\leq
  \tilde{C}_Be^{-\tilde{w}/\varepsilon},
\ee
with $\tilde{w} < w_a\eta^2/2R_1^2$.

%Dropping the $t$-dependence in the notation 
We next write the difference as
\begin{align*}
%\lefteqn{   
\varepsilon^{\frac{n+|\alpha|}{2}}(I_B-\tilde{I}_B)
%}
   =
   \int_{{\mathcal B}_{\frac{r}{2}}(z_0)}
   (E_1+E_2+E_3)dz,%=: I_1+I_2+I_3,
\end{align*}
where
\begin{align*}
%E_1   &=
%   [u(z)-u(z_0)]g(y,z,\varepsilon)
%   (y-x(z))^\alpha e^{i\phi(y-x(z),z,\varepsilon)/\varepsilon},\\
E_1 &=
   [g(t,y_0,z,\varepsilon)-g(t,y_0,z_0,\varepsilon)]
   (y_0-x(t,z))^\alpha e^{i\Phi(t,y_0-x(t,z),z)/\varepsilon}\varrho_\eta(y_0-x(t,z)),\\
E_2 &=
   g(t,y_0,z_0,\varepsilon)
   [(y_0-x(t,z))^\alpha -(J(t,z_0)(z_0-z))^\alpha]e^{i\Phi(t,y_0-x(t,z),z)/\varepsilon}\varrho_\eta(y_0-x(t,z)),\\
E_3 &=
   g(t,y_0,z_0,\varepsilon)
   (J(t,z_0)(z_0-z))^\alpha\left[e^{i\Phi(t,y_0-x(t,z),z)/\varepsilon}
   -e^{i\tilde{\Phi}(t,z,z_0)/\varepsilon}\right]\varrho_\eta(y_0-x(t,z)).
\end{align*}
We will now consider these integrands in sequence. 

From (P8) it follows that
for all $(t,z_0)\in\bar{\mathcal K}$, $z\in {\mathcal B}_{r/2}(z_0)$ and $\varepsilon>0$,
\be{e1est}
  |E_1| \leq 
   D_8|z-z_0|\left(1+\frac{|z-z_0|^{q}}{\varepsilon^\ell}\right)
   |y_0-x(t,z)|^{|\alpha|}
   e^{-w_8|y_0-x(t,z)|^2/\varepsilon},
\ee
with $q\geq 2\ell$.%, \verify{and $D_8$ uniform in $t,z,z_0$.}

For $E_2$ we note first that
$$
 |a^\alpha-b^\alpha| = |(a-b+b)^\alpha-b^\alpha|=\left|
\sum_{\stackrel{\beta_1+\beta_2=\alpha}{\beta_2\neq\alpha}}\frac{\alpha!}{\beta_1!\beta_2!}
 (a-b)^{\beta_1} b^{\beta_2} \right|
\leq
\bar{C}(\alpha) \sum_{j=1}^{|\alpha|} |a-b|^{j}|b|^{|\alpha|-j}.
$$
Therefore, by using (P$4'$), \eq{gbounded}
and
%as well as the fact that $|x(t,z)-y_0-J(t,z_0)(z-z_0)|\leq R_2|z-z_0|^2$/2 by 
\eq{xdiff2}
we get
for all $(t,z_0)\in\bar{\mathcal K}$, $z\in {\mathcal B}_{r/2}(z_0)$ and $\varepsilon>0$,
\begin{align}\lbeq{e2est}
%  \lefteqn{|E_2(t,z)|}\hskip 5 mm &\nonumber\\
|E_2|   &\leq \bar{C}(\alpha)D_7e^{-w_4|y_0-x(t,z)|^2/\varepsilon}
     \sum_{j=1}^{|\alpha|}
   |y_0-x(t,z)-J(t,z_0)(z_0-z)|^{j}|y_0-x(t,z)|^{|\alpha|-j}\nonumber\\
   &\leq
  \bar{C}(\alpha)   D_7
   \sum_{j=1}^{|\alpha|}\frac{R_2^j}{2^j}|z-z_0|^{2j}|y_0-x(t,z)|^{|\alpha|-j} 
   e^{-w_4|y_0-x(t,z)|^2/\varepsilon}\nonumber\\
   &\leq
C_2   \sum_{j=1}^{|\alpha|}|z-z_0|^{2j}|y_0-x(t,z)|^{|\alpha|-j} 
   e^{-w_4|y_0-x(t,z)|^2/\varepsilon},
\end{align}
where $C_2=\bar{C}(\alpha)D_7\max(R_2/2,(R_2/2)^{|\alpha|})$.
%\verify{uniform in $t,z,z_0$.}

For $E_3$ we first need to approximate the phase difference factor when
$z\in {\mathcal B}_{r/2}(z_0)$ and $(t,z_0)\in\bar{\mathcal K}$.
By \lem{PhiApprox} and \eq{xdiff},
\begin{align*}
|\Phi-\tilde{\Phi}|&\leq R_3 |z-z_0|^3,\\
  \Im \tilde{\Phi} &= 
  \frac12 (z-z_0)^T\Im A(t,z_0)(z-z_0)\geq \frac{w_a|z-z_0|^2}{2} 
\geq  \frac{w_a|y_0-x(t,z)|^2}{2R_1^2} .
\end{align*}
Therefore, upon using (P$4'$),
\begin{align*}
\left|e^{i\Phi/\varepsilon}
   -e^{i\tilde{\Phi}/\varepsilon}\right|\varrho_\eta&=
   \left|\frac{i(\Phi-\tilde\Phi)}{\varepsilon}
   \int_0^1 e^{i(s\Phi+(1-s)\tilde\Phi)/\varepsilon}\varrho_\eta ds\right|
   \leq R_3
   \frac{|z-z_0|^3}{\varepsilon}
   e^{-\min(\Im\Phi,\Im\tilde\Phi)/\varepsilon}\\
   &\leq R_3
   \frac{|z-z_0|^3}{\varepsilon}
   e^{-\min(w_4,w_a/2R_1^2)|y_0-x(t,z)|^2/\varepsilon}.
\end{align*}
Then from \eq{gbounded}, with $w'=\min(w_4,w_a/2R_1^2)$
and $C_3 = R_3D_{7}R_1^{|\alpha|}$,
\be{e3est}
|E_3|
 \leq \frac{C_3}{\varepsilon}
   |z-z_0|^{|\alpha|+3}
   e^{-w'|y_0-x(t,z)|^2/\varepsilon},
\ee
for all $(t,z_0)\in\bar{\mathcal K}$, $z\in {\mathcal B}_{r/2}(z_0)$ and $\varepsilon>0$.
We note that all the $E_j$ terms can be bounded by a form that can
be estimated by \lem{intest2}. Indeed, if we define
$$
   U^\varepsilon(a,b):=
   |z-z_0|^{a}|y_0-x(t,z)|^{b} e^{-c|y_0-x(t,z)|^2/\varepsilon},\qquad
   c=\min(w_8,w'),
$$
and set $C_e=\max(D_8,C_2,C_3)$,
we can summarize \eq{e1est}, \eq{e2est}, \eq{e3est} as
\begin{align*}
\lefteqn{\varepsilon^{\frac{n+|\alpha|}{2}}|I_B(t,z_0)-\tilde{I}_B(t,z_0)| }\hskip 
3 mm & \\
&\leq C_e
   \int_{{\mathcal B}_{\frac{r}{2}}(z_0)} U^\varepsilon(1,|\alpha|)+
   \frac{1}{\varepsilon^\ell}U^\varepsilon(q+1,|\alpha|)
   +
   \sum_{j=1}^{|\alpha|}U^\varepsilon(2j,|\alpha|-j)
   +\frac{1}{\varepsilon}U^\varepsilon(|\alpha|+3,0)dz.
   \end{align*}
We then use \lem{intest2}, the constant in which we denote $C_L$.
We get for $0<\varepsilon\leq 1$,
\begin{align*}
\lefteqn{\varepsilon^{\frac{n+|\alpha|}{2}}|I_B(t,z_0)-\tilde{I}_B(t,z_0)| }
\hskip 3 mm &\\
 &\leq C_eC_L
   \left(\varepsilon^{\frac{n+1+|\alpha|}{2}}+
   \varepsilon^{\frac{n+q+1-2\ell+|\alpha|}{2}}
%   \right.
%   \\ &\mbox{}\ \ \ \left.  
    +
   \sum_{j=1}^{|\alpha|}\varepsilon^{\frac{n+2j+|\alpha|-j}{2}}
   +\varepsilon^{\frac{n+|\alpha|+3+0-2}{2}}\right)
   \leq 
%   (|\alpha|+3)C_eC_L
   C'\varepsilon^{\frac{n+1+|\alpha|}{2}},
   \end{align*}
   since $q\geq 2\ell$.
Together with \eq{Itest} we finally obtain
$$
   |I_B(t,z_0)| \leq |I_B(t,z_0)-\tilde{I}_B(t,z_0)|+|\tilde{I}_B(t,z_0)|\leq
   C'\sqrt{\varepsilon} +\tilde{C}_Be^{-\tilde{w}/\varepsilon}\leq C''_B\sqrt\varepsilon,
$$
for all $(t,z_0)\in\bar{\mathcal K}$ and $0<\varepsilon\leq 1$.
This shows the last part of the second inequality in \eq{isibest},
and completes the proof with $C_B=\max(C'_B,C''_B)$.

%\verify{Snygga till konstanterns $C$ pa slutet.}

\bibliographystyle{plain}
\bibliography{new}

%%%%%%%%%%%%%%%%%%%%%%%%%%%%%%%%%%%%%%%%%%%%%%%%%%%%%%%%%%%%%%%%%%%%%

\appendix

\section{Proofs}\lbsec{proofs}

%%%%%%%%%%%%%%%%%%%%%%%%%%%%%%%%%%%%%%%%%%%%%%%%%%%%%%%%%%%%%%%%%%%%%%%%%%%

\subsection{Proof of \theo{invfunc}}\lbsec{incfuncproof}

The proof essentially follows the standard steps for proving the inverse
function theorem; see for instance \cite{rudin:76}.
We let ${\mathcal K}' = {\mathcal K}_{d'}\setminus X^{-1}({\mathcal C}_{x,\delta'})$
and consider the function
$$
\phi(z) = z + J^{-1}(t,z_0)(y -x(t,z)),
$$
%defined for $z\in K_d$
with $(t,z_0)\in{\mathcal K}'$ and $y\in \Real^n$ fixed.
Since $J$ is non-singular on ${\mathcal K}'$, finding a fixed point $\phi(z)=z$ is equivalent to finding a solution to the equation $y=x(t,z)$.
We note that 
$J$ is non-singular also on the slightly larger set 
${\mathcal K}''={\mathcal K}_d\setminus X^{-1}({\mathcal C}_{x,\delta'/2})\supset {\mathcal K}'$
and we let $R_{-1}$ be an upper bound of $J^{-1}$
on this (compact) set,
\be{Rm1def}
   R_{-1} = \sup_{(t,z)\in {\mathcal K}''} |J^{-1}(t,z)|<\infty.
\ee
%
%Let $R_{-1}$ be an upper bound of $J^{-1}$ on a set that is slightly
%larger than ${\mathcal K}'$,
%which is finite. It implies in particular that
%$|J^{-1}(t,z_0)|\leq R_{-1}$ since
%
%We next note that
%since ${\mathcal K}_{d'}\subset {\mathcal K}_d$ and
%${\mathcal C}_{x,\delta/4}\subset {\mathcal C}_{x,\delta'}$ we have 
%$(t,z_0)\in{\mathcal K}'\subset {\mathcal K}_d\setminus X^{-1}({\mathcal C}_{x,\delta/4})$ and therefore,
%by \eq{Rm1def},
We then choose $r$ as
\be{rdef}
  r = \min\left(d-d', \frac{1}{2R_{-1}R_2},\frac{\delta'}{2R_1}\right)>0.
\ee
We note that if
$z\in \bar{\mathcal B}_r(z_0)$ we have
$$
   {\rm dist}(z,K_0)\leq |z-z_0| + {\rm dist}(z_0,K_0)\leq r + d'\leq d,
$$
Hence, $\bar{\mathcal B}_r(z_0)\subset K_d$ and
for $z_1,z_2\in \bar{\mathcal B}_{r}(z_0)$, using \eq{Jdiff},
\begin{align}\lbeq{philip}
|\phi(z_1)-\phi(z_2)| &\leq 
\max_{z\in \bar{\mathcal B}_{r}(z_0)}\left|D\phi(z)\right||z_1-z_2|=
\max_{z\in \bar{\mathcal B}_{r}(z_0)}\left|
I - J^{-1}(t,z_0) J(t,z)
\right||z_1-z_2|\nonumber\\
&\leq 
R_{-1}
\max_{z\in \bar{\mathcal B}_{r}(z_0)}\left|
J(t,z_0) - J(t,z)
\right||z_1-z_2|
\leq
R_{-1}R_2|z_1-z_2||z-z_0| 
\nonumber\\
&\leq 
R_{-1}R_2 r |z_1-z_2|
\leq \frac12 |z_1-z_2|.
\end{align}
If $z_1$ and $z_2$ are both, different, fixed points of $\phi$ we get an
impossible inequality.
It follows that $\phi$ has at most one fixed point in
$\bar{\mathcal B}_{r}(z_0)$ and therefore $x(t,z)$ is one-to-one on
$\bar{\mathcal B}_{r}(z_0)$.
We next show that ${\mathcal V}_r(t,z_0)$ is open.
For each
$y'\in {\mathcal V}_r(t,z_0)$ there is a $z'\in {\mathcal B}_{r}(z_0)$
and a $\lambda>0$, such that $y'=x(t,z')$ and
${B}_{\lambda}(z')\subset {\mathcal B}_{r}(z_0)$.
Let $\lambda'=\lambda/2R_{-1}$.
Then if $y\in {\mathcal B}_{\lambda'}(y')$,
\begin{align*}
  |\phi(z') -z'|  &= \left|
  J^{-1}(t,z_0)(y -y')\right|\leq R_{-1}|y-y'|< R_{-1}\lambda'=\frac12 \lambda.
\end{align*}
Consequentially, by \eq{philip}, if $z\in\bar{\mathcal B}_{\lambda}(z')
\subset \bar{\mathcal B}_{r}(z_0)$,
$$
|\phi(z) -z'|  \leq 
|\phi(z) -\phi(z')|+|\phi(z') -z'|< \frac12|z-z'|+ \frac12\lambda<\lambda.
$$
Hence, $\phi(z)\in \bar{\mathcal B}_\lambda(z')$ and $\phi$ is a contraction mapping
on $\bar{\mathcal B}_\lambda(z')$. This means that $\phi$ has a unique fixed point $z_*\in \bar{\mathcal B}_\lambda(z')$
at which
$y=x(t,z_*)$.
Thus $y\in {\mathcal V}_r(t,z_0)$, showing
that ${\mathcal B}_{\lambda'}(y')\subset {\mathcal V}_r(t,z_0)$. Hence,
${\mathcal V}_r(t,z_0)$ is open. 
In particular, if $y'=y_0=x(t,z_0)$
we can take
$\lambda = r$ and ${\mathcal B}_\rho(y_0)\subset {\mathcal V}_r(t,z_0)$
with 
\be{rhodef}
\rho = r/2R_{-1}.
\ee
For $z\in {\mathcal B}_r(z_0)$,
\begin{align*}
  {\rm dist}\Bigl((t,x(t,z)),{\mathcal C}_x\Bigr)&\geq 
  {\rm dist}\Bigl((t,x(t,z_0)),{\mathcal C}_x\Bigr)-
  {\rm dist}\Bigl((t,x(t,z)),(t,x(t,z_0))\Bigr)\\
  &=  {\rm dist}\Bigl((t,x(t,z_0)),{\mathcal C}_x\Bigr)-
  |x(t,z)-x(t,z_0)|\geq \delta' - R_1|z-z_0|\\
  &\geq  \delta' - R_1r\geq \delta' - \frac{\delta'}{2}=\frac{\delta'}{2},
\end{align*}
which shows that 
$(t, {\mathcal V}_r(t,z_0))\subset {\mathcal C}_{x,\delta'/2}^c$.
%$\{t\}\times {\mathcal V}_r(t,z_0)\subset {\mathcal C}_{x,\delta/4}^c$.
This means that $J(t,z)$ is invertible 
and $(t,z)\in {\mathcal K}''$
for all $z\in {\mathcal B}_r(z_0)$. The last point in the theorem then follows from \eq{Rm1def}.
%and $|w|\leq r$. We can therefore define
%$$
%   C_{-1} = 
%   \mathop{\max_{(t,z_0)\in {\mathcal K}}}_{|w|\leq r} 
%   |J^{-1}(t,z_0+w)|<\infty.
%$$
That the inverse of $x(t,z)$ on ${\mathcal B}_r(z_0)$ is 
differentiable is proved in the same way as in \cite{rudin:76}.

\subsection{Proof of \lem{intest2}}\lbsec{intest2proof}
By \theo{invfunc} 
there is a smooth inverse of $x(t,\,\cdot\,)$ on ${\mathcal V}_r$.
Let $m(t,\,\cdot\,)$ be this inverse and $\rho$
the number paired with $r$ in \eq{thisr}.
Set $\tilde{\mathcal B}=m(t,{\mathcal B}_\rho(y_0))$.
%  Then
%\begin{align*}
%  |z-z'| &= |m(t,x(t,z))-m(t,x(t,z'))|
%  \leq \sup_{y\in {\mathcal B}_\rho(y_0)}\left|
%  \frac{\partial m}{\partial x}(t,y)\right||x(t,z)-x(t,z')| \\
%  &= \sup_{z\in \tilde{B}}\left|J^{-1}(t,z)\right||x(t,z)-x(t,z')|
%  \leq \bar{C}_{-1}|x(t,z)-x(t,z')|.
%\end{align*}
%\verify{Need to verify that $\{t\}\times \tilde{B}\subset X^{-1}({\mathcal C}_{\delta/2})$.}
We then split the integral as
\begin{align*}
%\int_{{\mathcal B}_r(z_0)}|z-z_0|^a|y_0-x(t,z)|^{b} e^{-c|y_0-x(t,z)|^2/\varepsilon}dz=
\int_{{\mathcal B}_{\frac{r}{2}}(z_0)}\ldots dz=
\int_{{\mathcal B}_{\frac{r}{2}}(z_0)\setminus\tilde{\mathcal B}}\ldots dz+
\int_{{\mathcal B}_{\frac{r}{2}}(z_0)\cap \tilde{\mathcal B}}\ldots dz =: I_1+I_2.
\end{align*}
By construction we have $|y_0-x(t,z)|\geq \rho$
for  $z\in {\mathcal B}_{\frac{r}{2}}(z_0)\setminus\tilde{\mathcal B}$.
Therefore, by \lem{intest1},
\begin{align*}
|I_1|&\leq 
\left(\frac{r}{2}\right)^a\int_{{{\mathcal B}_{\frac{r}{2}}(z_0)\setminus\tilde{\mathcal B}}}|y_0-x(t,z)|^{b} e^{-c|y_0-x(t,z)|^2/\varepsilon}dz\\
&\leq C(b,c)\left(\frac{r}{2}\right)^a\left|{\mathcal B}_{\frac{r}{2}}(z_0)\setminus\tilde{\mathcal B}\right|\varepsilon^{b/2}e^{-c\rho^2/2\varepsilon}
\leq C'(a,b,c,n,r,\rho)\varepsilon^{\frac{n+a+b}{2}},
%&\leq C(b,c)r^a|{\mathcal B}_r(z_0)|\varepsilon^{b/2}e^{-c\rho^2/\varepsilon}
%\leq C(b,c)r^a|{\mathcal B}_r(z_0)|\varepsilon^{b/2}
%(2c\rho^2/(n+a))^{(n+a)/2}\varepsilon^{(n+a)/2}.
\end{align*}
for all $(t,z_0)\in\bar{\mathcal{K}}$ and $\varepsilon>0$.
Furthermore, on $\tilde{\mathcal B}$ we can use \eq{zdiff}, and
%from \eq{zdiff} it follows that $|z-z_0|\leq R_{-1}|x(t,z)-x(t,z_0)|$
%on $\tilde{B}$.
%Therefore, 
upon changing variables $y=x(t,z)$, we get
\begin{align}\lbeq{finalint}
|I_2|%\int_{\tilde{B}}|z-z_0|^a|y-x(t,z)|^{b} e^{-c|y-x(t,z)|^2/\varepsilon}dz
&\leq R_{-1}^a
\int_{\tilde{\mathcal B}}|y_0-x(t,z)|^{a+b} e^{-c|y_0-x(t,z)|^2/\varepsilon}dz
\nonumber \\
&=R_{-1}^a
\int_{{\mathcal B}_\rho(y_0)}|y_0-y|^{a+b} e^{-c|y_0-y|^2/\varepsilon}|\det D_ym(t,y)|dy
\nonumber\\
&\leq 
R_{-1}^a
\sup_{y\in {\mathcal B}_\rho(y_0)} |\det D_ym(t,y)|
\int_{\Real^n} |y|^{a+b} e^{-c|y|^2/\varepsilon} dy
\nonumber\\
&=
R_{-1}^a
\sup_{y\in {\mathcal B}_\rho(y_0)} |\det D_ym(t,y)|
\varepsilon^{\frac{n+a+b}{2}}
\int_{\Real^n} |y|^{a+b} e^{-c|y|^2} dy.
\end{align}
For the determinant let $\lambda_j$ be the eigenvalues of $A\in\Real^{n\times n}$. Then
$$
   |\det A| = \prod |\lambda_j| \leq |\lambda_{\rm max}|^n
   = |A^TA|_2^{n/2}\leq |A|^n_2.
$$
Hence, by the fourth point in \theo{invfunc},
$$
\sup_{y\in {\mathcal B}_\rho(y_0)}
\left|\det D_ym(t,y)\right|
\leq \sup_{y\in {\mathcal B}_\rho(y_0)}
\left|D_y m(t,y)\right|^n= \sup_{z\in \tilde{\mathcal B}}
\left|J^{-1}(t,z)\right|^n\leq R_{-1}^n.
$$
Finally,
$$
|I_2|\leq R_{-1}^{a+n} C''(a,b,c,n)\varepsilon^{\frac{n+a+b}{2}},
$$
where $C''(a,b,c,n)$ is the value of the last integral in \eq{finalint}.
The result follows with $C=\max(C',R_{-1}^{a+n}C'')$,
since all these constants are uniform in 
$(t,z_0)\in\bar{\mathcal{K}}$.

\subsection{Proof of \lem{PhiApprox}}\lbsec{PhiApproxproof}

We consider $(t,z_0)\in \bar{\mathcal K}$.
By \theo{invfunc}, we have ${\mathcal B}_{r/2}(z_0)\subset K_d$
for these $(t,z_0)$.
For simplicity we henceforth drop the $t$-dependence in the notation.
By (P1) and (P2) we can Taylor expand $\Phi(x(z_0)-x(z),z)$
around $z=z_0$, and since $K_d$ is compact, we can
bound the remainder term using a constant $R_3$ that is uniform in
$(t,z_0)\in \bar{\mathcal K}$ and $z\in {\mathcal B}_{r/2}(z_0)$,
\begin{align*}
   \Biggl|\Phi(y_0-x(z),z) -&\Bigl(
   \Phi(0,z_0)- \left[J(z_0)^T\nabla_y\Phi(0,z_0)-\nabla_z\Phi(0,z_0)\right]\cdot(z-z_0)\\
  & +\frac12 (z-z_0)\cdot \left.D^2_z [\Phi(x(z_0)-x(z),z)]\right|_{z=z_0} (z-z_0)\Bigr)\Biggr| \leq R_3|z-z_0|^3.
\end{align*}
Using also (P6)
we get
\begin{align*}
   \Biggl|\Phi(y_0-x(z),z) -\Bigl(
   \Phi(0,z_0)+\frac12 (z-z_0)\cdot A(z_0)(z-z_0)\Bigr)\Biggr| \leq R_3|z-z_0|^3,
\end{align*}
where
\begin{align*}
  A(z_0) &= \left.D^2_z [\Phi(x(z_0)-x(z),z)]\right|_{z=z_0} =
  J(z_0)^TD^2_y\Phi(0,z_0)J(z_0) -J(z_0)D^2_{yz}\Phi(0,z_0)\\
  &\ \ \ \left.
  D_z\left(-J(z)^T\nabla_y\Phi(0,z)+\nabla_z\Phi(0,z)\right)\right|_{z=z_0}\\
  &= J(z_0)^TD^2_y\Phi(0,z_0)J(z_0) -J(z_0)D^2_{yz}\Phi(0,z_0).
\end{align*}
We have left to show the properties of $A(z_0)$. 
Since $\nabla_y\Phi(0,z_0)$ is real by (P6), so is $D^2_{yz}\Phi(0,z_0)$.
Clearly $J(z_0)$ is also real. Hence, 
$$
  \Im A(z_0) = J(z_0)^T\left(\Im D^2_y\Phi(0,z_0)\right)J(z_0),
$$
which is symmetric. To show the positive definiteness, we note that by (P6) 
both $\Phi(0,z_0)$ and $\nabla_y\Phi(0,z_0)$ are real and therefore,
$$
   \frac12y^T\Im D^2_y\Phi(0,z_0)y = \Im\Phi(y,z_0) + O(|y|^3).
$$
Moreover, for $|y|\leq 2\eta$ we have from (P4) that $\Im\Phi(y,z_0)\geq w_4 |y|^2$, so
$$
   \frac12y^T\Im D^2_y\Phi(0,z_0)y \geq w_4 |y|^2 + O(|y|^3).
$$
Setting $y=sv$ for some arbitrary $v\in \Real^n$ and scalar $s>0$, we 
therefore get
$$
   \frac12v^T\Im D^2_y\Phi(0,z_0)v =
  \frac{1}{2s^2}  (sv)^T\Im D^2_y\Phi(0,z_0)(sv) 
   \geq w_4 |v|^2 + O(s|v|^3),
$$
when $s$ is sufficiently small. Letting $s\to 0$ shows that
$\Im D^2_y\Phi(0,z_0)\geq 2w_4$.
Thus, finally,
$$
  v^T\Im A(z_0)v = (J(z_0) v)^T \Im D^2_y\Phi(0,z_0) (J(z_0)v)
  \geq 2w_4 |J(z_0)v|^2\geq \frac{2w_4}{R_{-1}^2}|v|^2,
$$
since $|v|=|J^{-1}(z_0)J(z_0)v|\leq R_{-1}|J(z_0)v|$
by \theo{invfunc}.
This concludes the proof with $w_a=2w_4/R_{-1}^2$.

\subsection{Proof of \lem{ICancel}}\lbsec{ICancelproof}

Without loss of generality we can take $z_0=0$.
By symmetry ${\mathcal B}_r(0)$ is invariant under the transformation $z\to -z$, so
$$
  \int_{{\mathcal B}_r(0)} (Rz)^\alpha e^{z^TAz}dz =
\int_{{\mathcal B}_r(0)} (R(-z))^\alpha e^{z^TAz}dz=
\frac12\int_{{\mathcal B}_r(0)} \left((Rz)^\alpha+(R(-z))^\alpha\right) e^{z^TAz}dz.
$$
Moreover, $(Rz)^{\alpha}$ will be of the form
$$
(Rz)^{\alpha} = \sum c_j z^{\ell_j},\qquad |\ell_j|=|{\alpha}|,
$$
for some 
multi-indices $\ell_j$ and
constants $c_j$, determined by the elements of $R$.
Hence,
\begin{align*}
  \int_{{\mathcal B}_r(0)} (Rz)^\alpha e^{z^TAz}dz &=
\frac12\sum c_j\int_{{\mathcal B}_r(0)} 
(z^{\ell_j}+(-z)^{\ell_j}) e^{z^TAz}dz\\
&=
\frac12\sum c_j\int_{{\mathcal B}_r(0)} 
z^{\ell_j}(1+(-1)^{|\ell_j|}) e^{z^TAz}dz = 0,
\end{align*}
if $|\ell_j|=|\alpha|$ is odd.

\end{document}